\newtheorem{thm} {Theorem}[section]
\newtheorem{lem} [thm]{Lemma}
\newtheorem{cor} [thm]{Corollary}
\newtheorem{defi}[thm]{Definition}
\newtheorem{qst} [thm]{Question} 
\newcommand{\rvline}[1]{\hspace*{#1}\vline\hspace*{#1}}
\newcommand{\var}[2][]{V_{#1}({#2})}
\newcommand{\monom}[2]{\binom{#1 + #2}{#2}}
\newcommand{\tor}{\mathrm{tor}}
\newcommand{\red}{\mathrm{red}}
\newcommand{\et}{\mathrm{\acute{e}t}}
\newcommand{\sing}{\mathrm{sing}}
\newcommand{\ab}{\mathrm{ab}}
\newcommand{\an}{\mathrm{an}}
\DeclareMathOperator{\codim}{codim}
\DeclareMathOperator{\HP}{HP}
\DeclareMathOperator{\Proj}{Proj}
\DeclareMathOperator{\Spec}{Spec}
\DeclareMathOperator{\rank}{rank}
\DeclareMathOperator{\conn}{conn}
\DeclareMathOperator{\irr}{irr}
\DeclareMathOperator{\NS}{NS}
\DeclareMathOperator{\Gr}{\mathbf{Gr}}
\DeclareMathOperator{\Pic}{\mathbf{Pic}}
\DeclareMathOperator{\Alb}{\mathbf{Alb}}
\DeclareMathOperator{\Hilb}{\mathbf{Hilb}}
\DeclareMathOperator{\EffDiv}{\mathbf{CDiv}}
\DeclareMathOperator{\IrrDiv}{\mathbf{PDiv}}
\DeclareMathOperator{\Chow}{\mathbf{Chow}}
\DeclareMathOperator{\charac}{char}
\begin{document}
\author{Hyuk Jun Kweon}
\address{Department of Mathematics, Massachusetts Institute of Technology, Cambridge, MA
02139-4307, USA}
\email{kweon@mit.edu}
\urladdr{https://kweon7182.github.io/}
\date{\today}
\subjclass[2010]{Primary 14C05; Secondary 14C20, 14C22} 
\keywords{N\'eron--Severi group, Castelnuovo–Mumford regularity, Gotzmann number}
\thanks{This research was partially supported by Samsung Scholarship and National Science Foundation grant DMS-1601946.}
\title{Bounds on the Torsion Subgroups of N\'eron--Severi Groups}

\begin{abstract}
  Let $X \hookrightarrow \mathbb{P}^r$ be a smooth projective variety defined by homogeneous polynomials of degree $\leq d$. We give explicit upper bounds on the order of the torsion subgroup $(\NS X)_{\tor}$ of the N\'eron--Severi group of $X$. The bounds are derived from an explicit upper bound on the number of irreducible components of either the Hilbert scheme $\Hilb_Q X$ or the scheme $\EffDiv_n X$ parametrizing the effective Cartier divisors of degree $n$ on $X$. We also give an upper bound on the number of generators of $(\NS X)[\ell^\infty]$ uniform as $\ell\neq\charac k$ varies.
\end{abstract}
\maketitle

\section{Introduction}
The N\'eron--Severi group $\NS X$ of a smooth proper variety $X$ over a field $k$ is the group of divisors modulo algebraic equivalence. If $k$ is algebraically closed, $\NS X$ also equals the group of connected components of the Picard scheme of $X$. N\'eron \cite[p. 145, Th\'eor\`eme 2]{Ner} and Severi \cite{Sev} proved that $\NS X$ is finitely generated. The aim of this paper is to give an explicit upper bound on the order of $(\NS X)_{\tor}$. To the best of the author's knowledge, this is the first explicit bound on the order of $(\NS X)_{\tor}$.
\begin{restatable*}{thm}{NSbound}\label{thm:NS better bound}
  Let $X \hookrightarrow \mathbb{P}^r$ be a smooth projective variety defined by homogeneous polynomials of degree $\leq d$. Then
  \begin{equation}
    \#(\NS X)_\tor \leq  2^{d^{r^2+2r\log_2 r}}.
  \end{equation}
\end{restatable*}

\noindent For any prime number $\ell \neq \charac k$, there is a natural isomorphism $(\NS X)[\ell^\infty] \simeq H^2_\et(X,\mathbb{Z}_\ell)_\tor$ \cite[2.2]{SZ}. Thus, \Cref{thm:NS better bound} implies the corollary below.
% As an application, we will give a bound on the torsion subgroup of second cohomology groups.

\begin{restatable*}{cor}{SHbound}\label{thm:H2 bound}
  Let $X \hookrightarrow \mathbb{P}^r$ be a smooth projective variety defined by homogeneous polynomials of degree $\leq d$. Let $\ell \neq \charac k$ be a prime number. Then
  \begin{equation*}
    \prod_{\substack{\ell \neq \charac k\\ \ell \textup{ is prime}}} \# H^2_\et(X,\mathbb{Z}_\ell)_\tor \leq 2^{d^{r^2+2r\log_2 r}}.
  \end{equation*}
\end{restatable*}
\noindent We also give a uniform upper bound on the number of generators of $(\NS X)[\ell^\infty]$ as described below. This bound and a sketch of its proof were suggested by J\'anos Koll\'ar after seeing an earlier draft of our paper containing only \Cref{thm:NS better bound}.

\begin{restatable*}{thm}{NSgen}\label{thm:NSgen}
  Let $X \hookrightarrow \mathbb{P}^r$ be a smooth connected projective variety of degree $d$ over $k$. Let $p=\charac k$, and
  \[N = \begin{cases}
      (\NS X)_\tor, & \mbox{if } p=0 \\
      (\NS X)_\tor/(\NS X)[p^\infty], & \mbox{if } p>0.
    \end{cases}\]
  Then $N$ is generated by less than or equal to $(d-1)(d-2)$ elements.
\end{restatable*}

\begin{restatable*}{cor}{SHgen}
  Let $X \hookrightarrow \mathbb{P}^r$ be a smooth connected projective variety of degree $d$ over $k$. Let $\ell \neq \charac k$ be a prime number. Then $H^2_\et(X,\mathbb{Z}_\ell)_\tor$ is generated by less than or equal to $(d-1)(d-2)$ elements.
\end{restatable*}

% \begin{restatable}{thm}{GotzmannBound}\label{thm:Gotzmann bound}
%   Let $I \subset k[x_0,\cdots,x_r]$ be a nonzero ideal generated by polynomials of degree $\leq d$. Let $\varphi(\HP_I)$ be the Gotzmann number of the Hilbert polynomial of $I$. Then
%   \begin{equation}\label{eqn:Gotzmann bound}
%     \varphi(\HP_I) \leq d^{2^{2r+2}}.
%   \end{equation}
% \end{restatable}

The torsion-free quotient $(\NS X)/(\NS X)_\tor$ is the group of divisors modulo numerical equivalence. Its rank is bounded by the second Betti number of $X$. Katz found an upper bound on the sum of all Betti numbers of $X$ \cite[Theorem 1]{Kat}; this gives a rough bound on the rank of $\NS X$.

The torsion subgroup $(\NS X)_{\tor}$ is the group of numerically zero divisors modulo algebraic equivalence. This is a birational invariant \cite[p. 177]{Bal}. Recently, Poonen, Testa and van Luijk gave an algorithm to compute it.\footnote{They also found an algorithm computing the torsion free quotient assuming the Tate conjecture.} The algorithm is based on their theorem that $(\NS X)_{\tor}$ injects into the set of connected components of $\Hilb_Q X$ for some polynomial $Q$ \cite[Lemma 8.29]{PTL}. The order of $(\NS X)_{\tor}$ will be bounded by finding an explicit $Q$ and bounding the number of connected components of $\Hilb_Q X$. We will show that an upper bound on the number of connected components of $\EffDiv_n X$ for an integer $n$ depending on $Q$ also gives an upper bound on the order of $(\NS X)_{\tor}$.

\Cref{sec:numerical} shows that $Q$ may be taken to be the Hilbert polynomial of $mH$, where $H$ is a hyperplane section of $X$ and $m$ is an explicit integer.
% In \Cref{sec:gotzmann}, an upper bound on Gotzmann numbers will be given, and this restrict a dimension of a Grassmannian containing $\Hilb_Q X$.
\Cref{sec:hilbert} bounds the number of irreducible components of $\Hilb_Q X$ by using its embedding in a Grassmannian. \Cref{sec:chow} bounds the number of irreducible component of $\EffDiv_n X$ by using Koll\'ar's technique \cite[Exercise I.3.28]{Jan}. Finally, \Cref{sec:generators} gives a uniform upper bound on the number of generators of $(\NS X)[\ell^\infty]$.

From now on, the base field $k$ is assumed to be algebraically closed, since a base change makes the N\'eron--Severi group only larger \cite[Proposition 6.1]{PTL}. However, no assumption is made on the characteristic of $k$.

\section{Notation}
Given a scheme $X$ over $k$, let $\conn(X)$ and $\irr(X)$ be the set of connected components of $X$ and the set of irreducible components of $X$, respective. Let $X_\red$ be the reduced closed subscheme associated to $X$. If $X$ is smooth and proper, then $\NS X$ denote the N\'eron--Severi group of $X$. Let $H^i_\et(X,\mathscr{F})$ be the $i$-th \'etale cohomology group of $X$ corresponding to an \'etale sheaf $\mathscr{F}$. If $k = \mathbb{C}$, then let $X^\an$ be the analytic space of $X$. If $M$ is a topological manifold, then $H_\sing^i(M,\mathbb{Z})$ (resp. $H_i^\sing(M,\mathbb{Z})$) denotes the $i$-th singular cohomology (resp. homology) group with integer coefficients.

A projective variety is a closed subscheme of $\mathbb{P}^r=\Proj k[x_0,\cdots,x_r]$ for some $r$. Suppose that $X \hookrightarrow \mathbb{P}^r$ is a projective variety. Let $I_X \subset k[x_0,\cdots,x_r]$ be the saturated ideal defining $X$. Given $f_0,\cdots,f_{t-1} \in k[x_0,\cdots,x_r]$, let $\var[X]{f_0,\cdots,f_{t-1}}$ be the subscheme of $X$ defined by the ideal $(f_0,\cdots,f_{t-1})+I_X$. Let $\mathcal{O}_X$, $\mathscr{I}_X$, $\Omega_X$ and $\omega_X$ be the sheaf of regular functions, the ideal sheaf, the sheaf of differentials and the canonical sheaf of $X$, respectively.

Given a coherent sheaf $\mathscr{F}$ on $X$, let $\HP_\mathscr{F}$ be the Hilbert polynomial of $\mathscr{F}$, and $\Gamma(\mathscr{F})$ be the global section of $\mathscr{F}$. Given a graded module $M$ over $k$, let $\HP_M$ be the Hilbert polynomial of $M$, and $M_t$ be the degree $t$ part of $M$. Take an effective divisor $D$ on $X$. Let $\HP_D$ be the Hilbert polynomial of $D$ as a subscheme of $\mathbb{P}^r$. Then $\mathcal{O}(-D)\subset \mathcal{O}_X$ is the ideal sheaf corresponding to $D$, and
\[ \HP_D = \HP_{\mathcal{O}_X} - \HP_{\mathcal{O}(-D)}.\]

Let $\Hilb X$ be the Hilbert scheme of $X$. Given a polynomial $Q(t)$, let $\Hilb_Q X$ be the Hilbert scheme of $X$ parametrizing closed subschemes of $X$ with Hilbert polynomial $Q$. Let $\Chow_{\delta,n} X$ be the Chow variety of dimension $\delta$ and degree $n$ algebraic cycles on $X$. Let $\EffDiv X$ be the scheme parametrizing the effective Cartier divisors on $X$, and let $\EffDiv_n X$ be the open and closed subscheme of $\EffDiv X$ corresponding to the divisors of degree $n$. Let $\Pic X$ be the Picard scheme of $X$. Let $\Alb X$ be the Albanese variety of $X$. Given a vector space $V$ and nonnegative number $t$, let $\Gr(t,V)$ be the Grassmannian parametrizing $t$-dimensional subspaces of $V$. Let $\Gr(t,n) = \Gr(t,k^n)$. %Let $\Chow_{\delta,n} X$ be the Chow variety of $X$ parametrizing the algebraic cycles of dimension $\delta$ and degree $n$ on $X$.

Given a set $S$, let $\# S$ be the number of elements in $S$. Give a group $A$, let $A^\ab$ be the abelianization of $A$, and $A^{(\ell)} = \varprojlim A^\ab/\ell^n A^\ab$ be the maximal pro-$\ell$ abelian quotient of $A$. If $A$ is abelian, let $A_\tor$, $A[n]$ and $A[\ell^\infty]$ be the set of torsion elements, $n$-torsion elements and $\ell$-power torsion elements, respectively. If $A$ is a finite abelian group, then $A^*$ is the Pontryagin dual of $A$.

% Given $\mathbf{a}\subset\mathbb{N}^{r+1}$, let $\mathbf{a}_i$ be the $i$-th coordinate of $\mathbf{a}$. Moreover, $\mathbf{x^a} \subset k[x_0,\cdots,x_r]$ is a monomial defined by $\mathbf{x^a} = x_0^{\mathbf{a}_0}x_1^{\mathbf{a}_1}\cdots x_r^{\mathbf{a}_r}$.

% Let $F\colon\mathbb{N}\rightarrow\mathbb{R}$ be a function. The difference $\Delta F$ of $F$ is defined by
% \[ \Delta F(t) = F(t) - F(t-1). \]
% The summation of $F$ is extended into
% \begin{align*}\label{eq:monom}
%   \sum_{s=a}^t F(s) &= \begin{cases}
%     F(a)+F(a+1)+\cdots+F(t)      & \text{if $t \geq a$,}\\
%     0                            & \text{if $t = a-1$,} \\
%     -F(a-1)-F(a-2)-\cdots-F(t+1) & \text{if $t < a-1$}.
%   \end{cases}
% \end{align*}
% If $F(t)$ is a polynomial, then $\Delta F(t)$ and $\sum_{s=a}^t F(s)$ are both polynomials in $t$. Moreover,
% \[\Delta \left(\sum_{s=a}^t F(s)\right) = F(t).\]

% Let $t$ be a nonnegative integer, and $r$ be an integer. Then
% \begin{align*}\label{eq:monom}
%   \monom{t}{r} &= \binom{t+r}{t} \\
%                &= \begin{cases}
%                  \dim k[x_0,\cdots,x_r]_t             & \text{if $0 \leq t$,}\\
%                  0                                   & \text{if $-r \leq t < 0$,} \\
%                  (-1)^r\dim k[x_0,\cdots,x_r]_{-t-r-1} & \text{if $t<-r$}.
%   \end{cases}
% \end{align*}
% Furthermore,
% \[ \monom{t}{0} = \monom{0}{r} = 1 \text{ and } \Delta\monom{t}{r} = \monom{t}{r-1}. \]

\section{Numerical Conditions}\label{sec:numerical}
Let $X \hookrightarrow \mathbb{P}^r$ be a smooth projective variety defined by polynomials of degree $\leq d$. Let $K$ and $H$ be a canonical divisor and a hyperplane section of $X$, respectively. The goal of this section is to give an explicit $m$ such that
\begin{equation}\label{eqn:sec2goal}
  \# \conn(\Hilb_{\HP_{mH}}X) \geq \# (\NS X)_\tor.
\end{equation}

Poonen, Testa and van Luijk proved that an order of a N\'eron--Severi group is bounded by a number of connected components of a certain Hilbert scheme. The theorem and the proof below is a reformulation of their work in \cite[Section 8.4]{PTL}.

\begin{thm}[Poonen, Testa and van Luijk]\label{thm:bound by Hilb}
  Let $F$ be a divisor on $X$ and $Q = \HP_{\mathcal{O}_X} - \HP_{\mathcal{O}(-F)}$. If $\mathcal{O}_X(F+D)$ has a global section for every numerically zero divisor $D$, then
  \[ \# \conn(\Hilb_Q X \cap \EffDiv X) \geq \# (\NS X)_\tor.\]
\end{thm}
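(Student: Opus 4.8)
The plan is to realize $(\NS X)_\tor$ as a group acting freely on the set $\conn(\Hilb_Q X \cap \EffDiv X)$, or more precisely to construct an injection from $(\NS X)_\tor$ into that set of connected components, so that counting components bounds the order of the torsion subgroup. The key players are the divisor $F$, the hypothesis that $\mathcal{O}_X(F+D)$ is effective for every numerically trivial $D$, and the fact that the Hilbert polynomial is locally constant in flat families, so that if $F$ has Hilbert polynomial $Q = \HP_{\mathcal{O}_X} - \HP_{\mathcal{O}(-F)}$, then any effective divisor linearly equivalent to $F+D$ with $D$ numerically zero also has Hilbert polynomial $Q$ (since numerically zero divisors have vanishing intersection numbers with all powers of $H$, the Hilbert polynomial of $F+D$ agrees with that of $F$). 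Hence every such divisor defines a $k$-point of $\Hilb_Q X \cap \EffDiv X$.

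First I would set up the map. Given a class $\tau \in (\NS X)_\tor$, lift it to a numerically zero divisor $D$ (any representative), so $F + D$ is numerically equivalent to $F$; by hypothesis $\mathcal{O}_X(F+D)$ has a nonzero global section, hence there is an effective divisor $E$ with $E \sim F + D$. By the Hilbert-polynomial remark above, $E$ is a point of $\Hilb_Q X \cap \EffDiv X$. Send $\tau$ to the connected component of $\Hilb_Q X \cap \EffDiv X$ containing $[E]$. The crucial claim is that this is well-defined and injective: two effective divisors $E, E'$ lie in the same connected component of $\EffDiv X$ if and only if they are algebraically equivalent (this is essentially the definition of algebraic equivalence via connectedness of the parameter space, using that $\EffDiv X$ is a disjoint union over Picard components fibered in linear systems $\mathbb{P}(H^0)$, which are connected). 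So $[E]$ and $[E']$ lie in the same component precisely when $E \equiv_{\mathrm{alg}} E'$, i.e. when their classes in $\NS X$ agree. Since $E \sim F + D$ and $E' \sim F + D'$ with $D \equiv 0 \equiv D'$ numerically, the classes of $E$ and $E'$ in $\NS X$ are $[F] + [D]$ and $[F] + [D']$; they agree in $\NS X$ iff $[D] = [D']$ in $(\NS X)_\tor$, i.e. iff $\tau = \tau'$. This gives both well-definedness (independence of the choice of representative $D$ of $\tau$ and of the effective $E$) and injectivity simultaneously.

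I would then conclude: the map $\tau \mapsto (\text{component of } [E])$ is a well-defined injection $(\NS X)_\tor \hookrightarrow \conn(\Hilb_Q X \cap \EffDiv X)$, so $\#(\NS X)_\tor \leq \#\conn(\Hilb_Q X \cap \EffDiv X)$ as claimed. The main obstacle, and the step needing the most care, is the precise identification of the connected components of $\EffDiv X$ (equivalently of $\Hilb_Q X \cap \EffDiv X$) with algebraic-equivalence classes of effective divisors: one must know that $\EffDiv X \to \Pic X$ is a well-behaved (projective-space-bundle-like, or at least fiberwise connected) morphism over each component of $\Pic X$, so that fibers are connected, and that distinct components of $\Pic X$ with the same numerical class still get contracted appropriately — but in fact the correct statement is that two effective divisors are in the same component of $\EffDiv X$ iff they define the same point of $\NS X = \pi_0(\Pic X)$, which is exactly what is needed. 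This is the content that is genuinely borrowed from \cite[Section 8.4]{PTL}; the rest is bookkeeping with Hilbert polynomials and the numerical-triviality hypothesis on $F$.
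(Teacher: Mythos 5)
Your proposal runs the argument in the opposite direction from the paper's but rests on the same underlying mechanism. The paper works with the proper morphism $\pi\colon \EffDiv X \to \Pic X$, lets $\Pic^c X$ be the union of connected components of $\Pic X$ parametrizing divisors numerically equivalent to $F$, observes that $\pi^{-1}(\Pic^c X)$ is open and closed in $\Hilb_Q X \cap \EffDiv X$, and then uses only that $\pi^{-1}(\Pic^c X) \to \Pic^c X$ is \emph{surjective} (this is where the hypothesis on $F+D$ enters) to get
$\#\conn(\Hilb_Q X \cap \EffDiv X) \geq \#\conn(\pi^{-1}(\Pic^c X)) \geq \#\conn(\Pic^c X) = \#(\NS X)_\tor$.
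A continuous surjection can only have at least as many connected components upstairs as downstairs, so the paper never has to analyze the fibers of $\pi$ or prove any map well defined. You instead construct an injection $(\NS X)_\tor \hookrightarrow \conn(\Hilb_Q X\cap\EffDiv X)$, which forces you to show that $\tau \mapsto (\text{component of }[E])$ is well defined, i.e.\ that $\pi^{-1}(P)$ is connected for each connected component $P$ of $\Pic^c X$.

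That is where your write-up has a genuine soft spot. The equivalence you invoke --- two effective divisors lie in the same connected component of $\EffDiv X$ if and only if they are algebraically equivalent, ``essentially by definition'' --- is false in general in the ``if'' direction: algebraic equivalence does not require the connecting family to consist of effective divisors, and over a fixed connected component $P$ of $\Pic X$ the locus $\{L \in P : h^0(L) > 0\}$ is a closed subset that may be proper and disconnected, so $\pi^{-1}(P)$ need not be connected. What rescues your map is the hypothesis itself, used a second time: since $\mathcal{O}_X(F+D)$ has a section for \emph{every} numerically trivial $D$, every fiber of $\pi$ over $\Pic^c X$ is a nonempty, hence connected, projective space, so $\pi^{-1}(P)\to P$ is proper and surjective with connected fibers onto a connected base, and therefore $\pi^{-1}(P)$ is connected. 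With that substitution your argument closes up (the injectivity direction and the Hilbert-polynomial bookkeeping are fine); but as written the key connectivity claim is asserted as a definition rather than derived from the hypothesis, and the paper's formulation via the surjection sidesteps the issue entirely.
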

\begin{proof}
  % There exists a scheme $\EffDiv X$ which parametrizes the effective divisors of $X$. Furthermore,
  Recall that $\EffDiv X$ is an open and closed subscheme of $\Hilb X$ \cite[Exercise I.3.28]{Jan}, and there is a natural proper morphism $\pi: \EffDiv X \rightarrow \Pic X$ sending a divisor to the corresponding class \cite[p. 214]{BLR}.

  Let $\Pic^c X$ be the finite union of connected components of $\Pic X$ parametrizing the divisors numerically equivalent to $F$. Since the Hilbert polynomial of a divisor is a numerical invariant, $Q$ is the Hilbert polynomial of each divisor corresponding to a closed point of $\pi^{-1}(\Pic^c X)$. Thus, $\pi^{-1}(\Pic^c X) \subset \Hilb_Q X$. Since $\Pic^c X$ is open and closed in $\Pic X$, and $\EffDiv X$ is open and closed in $\Hilb X$, the scheme $\pi^{-1}(\Pic^c X)$ is open and closed in $\Hilb_Q X \cap \EffDiv X$. Thus,
  \begin{align*}
    \# \conn(\Hilb_Q X \cap \EffDiv X) &\geq \# \conn(\pi^{-1}(\Pic^c X)).
  \end{align*}
  Since $F+D$ is linearly equivalent to an effective divisor for every numerically zero divisor $D$, the morphism $\pi$ restricts to a surjection $\pi^{-1}(\Pic^c X) \rightarrow \Pic^c X$. Hence,
  \begin{align*}
    \# \conn(\pi^{-1}(\Pic^c X)) &\geq \# \conn(\Pic^c X) \\
                                 &= (\NS X)_\tor. \qedhere
  \end{align*}
\end{proof}

The authors of \cite{PTL} chose $F = K + (\dim X + 2)H$ because of the following partial result towards Fujita's conjecture.

\begin{thm}[Keeler {\cite[Theorem 1.1]{Kee}}]\label{thm:Fujita}
  Let $L$ be an ample divisor on X. Then
  \begin{enumerate}[label=(\alph*)]
  \item $\mathcal{O}_X(K+(\dim X)H+L)$ is generated by global sections, and
  \item $\mathcal{O}_X(K+(\dim X + 1)H+L)$ is very ample.
  \end{enumerate}
\end{thm}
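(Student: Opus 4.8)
The plan is to read both parts through the lens of Castelnuovo–Mumford regularity with respect to the very ample, and in particular globally generated, line bundle $\mathcal{O}_X(H)$. Write $n=\dim X$ and $\mathscr{F}=\mathcal{O}_X(K+nH+L)$. Part (b) will then be deduced formally from part (a): the line bundle $\mathcal{O}_X(K+(n+1)H+L)$ equals $\mathscr{F}\otimes\mathcal{O}_X(H)$, the tensor product of a globally generated line bundle — globally generated by part (a) — with the very ample $\mathcal{O}_X(H)$, and such a product is very ample by the standard fact that a globally generated sheaf tensored with a very ample one is very ample. So the heart of the matter is part (a), the global generation of $\mathscr{F}$.

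For part (a) I would show that $\mathscr{F}$ is $0$-regular with respect to $\mathcal{O}_X(H)$ and then invoke Mumford's theorem that a sheaf which is $0$-regular with respect to a globally generated ample line bundle is globally generated. By definition, $0$-regularity of $\mathscr{F}$ amounts to
\[ H^i\!\left(X,\mathcal{O}_X(K+(n-i)H+L)\right)=0\qquad\text{for all } i\geq 1. \]
For $i>n$ this vanishing is automatic for dimension reasons, and for $1\leq i\leq n$ Serre duality rewrites it, upon setting $j:=n-i\in\{0,\dots,n-1\}$, as
\[ H^{j}\!\left(X,\mathcal{O}_X(-A_j)\right)=0,\qquad A_j:=jH+L. \]
Each $A_j$ is ample, being a sum of ample and very ample divisors; the case $j=0$ is the triviality that an anti-ample line bundle on a positive-dimensional projective variety has no global sections; and what remains is exactly a Kodaira-type vanishing statement in cohomological degrees $j<\dim X$.

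In characteristic zero this last vanishing is the Kodaira–Akizuki–Nakano / Kawamata–Viehweg theorem, and the argument is complete. In positive characteristic it is genuinely false in this generality — Raynaud's surfaces already violate $H^1(X,A^{-1})=0$ for suitable ample $A$ — so one must replace it by a characteristic-free substitute. Here I would invoke the Frobenius-amplitude formalism of Arapura, exploiting crucially the hypothesis that $H$ is globally generated, to control the vanishing range of $H^{\bullet}(X,A_j^{-1})$. Alternatively one can try to induct on $n$ by restricting to a general smooth member $Y\in|H|$, using adjunction $\omega_Y=\mathcal{O}_X(K+H)|_Y$ to identify $\mathscr{F}|_Y$ with the analogous bundle on $Y$ and lifting global generation through $0\to\mathcal{O}_X(K+(n-1)H+L)\to\mathscr{F}\to\mathscr{F}|_Y\to 0$; but this reduction still requires the low-degree vanishing $H^{1}(X,\mathcal{O}_X(-(n-1)H-L))=0$, so it does not by itself circumvent the difficulty. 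The main obstacle is therefore precisely this positive-characteristic vanishing; once it is in hand, the regularity bookkeeping above, Mumford's global-generation criterion, and the formal passage to part (b) are routine.
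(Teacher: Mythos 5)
This theorem is not proved in the paper at all: it is quoted verbatim from Keeler, and the paper's ``proof'' is the citation \cite[Theorem 1.1]{Kee}. Measured against that, your proposal is a faithful reconstruction of the architecture of Keeler's actual argument rather than a divergence from the paper. The bookkeeping is correct: $0$-regularity of $\mathcal{O}_X(K+(\dim X)H+L)$ with respect to the very ample $\mathcal{O}_X(H)$ plus Mumford's criterion gives (a); Serre duality converts the required vanishings into $H^{j}(X,\mathcal{O}_X(-jH-L))=0$ for $0\leq j<\dim X$; and (b) follows formally from (a) via the standard fact that a globally generated line bundle tensored with a very ample one is very ample. You also correctly identify that the entire difficulty is concentrated in the Kodaira-type vanishing in degrees below $\dim X$, that this is a theorem in characteristic zero but false in general in positive characteristic (Raynaud), and that the characteristic-free substitute must exploit the global generation of $H$ --- which is exactly what Keeler does via Arapura's Frobenius amplitude.

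The one honest gap, which you flag yourself, is that the proposal does not actually establish that vanishing; it only names the tool. That step is not ``routine modulo a reference'' --- it is the mathematical content of Keeler's paper (the bounds on Frobenius amplitude of negative powers of an ample globally generated bundle and Arapura's vanishing theorem), and your alternative suggestion of inducting on a general member of $|H|$ indeed circles back to the same low-degree vanishing, as you note. So as a self-contained proof the proposal is incomplete at precisely that point; as a reduction of the theorem to the identified vanishing statement, it is correct and consistent with the source the paper relies on. Two minor points worth keeping in mind: Mumford's criterion and the regularity formalism work over any field, so nothing else in your reduction is characteristic-sensitive; and in the degenerate case $j=0$ one should say that $\mathcal{O}_X(-L)$ has no nonzero sections because $L$ is ample on each (positive-dimensional) component, not merely ``anti-ample on a positive-dimensional variety,'' though this is cosmetic.
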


However, computing the Hilbert polynomial of $\mathcal{O}(-K)$ is somehow difficult. Therefore, we will show that $F = ((d-1)\cdot\codim X)H$ is another choice.

\begin{lem}\label{lem:section of det}
  Let $Y$ be a nonempty smooth closed subscheme of an affine space $\mathbb{A}^r = \Spec k[x_0,\cdots,x_{r-1}]$. Suppose that the ideal $I$ defining $Y$ is generated by polynomials of degree $\leq d$. Let $c = \codim Y$. Then there are polynomials $f_0, \cdots, f_{c-1} \in I$ such that
  \begin{enumerate}[label=(\alph*)]
  \item $\deg f_i = d$ for all $i$, and
  \item $f_0\wedge\cdots\wedge f_{c-1}$ represents a nonzero element in
    $\bigwedge^c(I/I^2)$.
  \end{enumerate}
\end{lem}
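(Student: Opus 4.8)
The statement is local on the smooth variety $Y$, and the natural object to produce is a regular sequence of length $c = \codim Y$ inside $I$, all of whose members have degree exactly $d$, that generates $I/I^2$ at each point (equivalently, whose wedge is a nowhere-vanishing section of $\bigwedge^c(I/I^2)$, which is a line bundle on $Y$ since $Y$ is smooth and $I/I^2$ is locally free of rank $c$). The hypothesis gives generators $g_1,\dots,g_s \in I$ of degree $\le d$; the problem is that $s$ may exceed $c$, that the $g_j$ may have degree $<d$, and that no fixed subset of $c$ of them will do the job everywhere on $Y$ simultaneously.

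First I would homogenize the degree issue: replace each generator $g_j$ of degree $e_j \le d$ by the $\binom{d-e_j+r}{r}$ products $x^\alpha g_j$ with $|\alpha| = d - e_j$. This new finite list $h_1,\dots,h_M \in I$ still generates $I$ (since $Y$ is, say, at the origin or more robustly because these products generate $I$ in degrees $\ge d$ and we only care about the sheaf $I/I^2$ — actually I would argue that $h_1,\dots,h_M$ generate the ideal $I$ as an ideal: for a point $y \in Y$, localizing, each $g_j$ is a unit combination of the $x^\alpha g_j$ near $y$ after dividing by a coordinate that is nonzero at $y$, so $(h_1,\dots,h_M) = I$ in every local ring, hence as ideals). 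Now every $h_i$ has degree exactly $d$, and $\{h_i\}$ still generates $I/I^2$ as an $\mathcal{O}_Y$-module.

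The second and main step is a general position / prime-avoidance argument to pass from $M$ generators of the rank-$c$ locally free sheaf $I/I^2$ to $c$ generic $k$-linear combinations that still generate it everywhere on $Y$, and whose wedge is therefore a nowhere-zero section of the line bundle $\bigwedge^c(I/I^2)$. The clean way: the $h_i$ define a surjection $\mathcal{O}_Y^{M} \twoheadrightarrow I/I^2$, i.e. a morphism $Y \to \Gr(M-c, M)$ (or a section of the appropriate Grassmann bundle); choosing $c$ linear combinations with coefficient matrix $A \in k^{c\times M}$ fails to generate at $y$ exactly when the $c$-plane $\ker A$ meets the $(M-c)$-dimensional subspace corresponding to $y$ nontrivially, a codimension-$\ge 1$ condition on $A$ for each $y$; since $Y$ has finite dimension and $k$ is algebraically closed (hence infinite), a dimension count over the incidence variety $\{(y,A) : \ker A \cap V_y \ne 0\}$ shows a general $A$ avoids all $y \in Y$. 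Equivalently one can do this affine-locally on a finite cover of $Y$ by opens where $I/I^2$ is free, use prime avoidance over each local ring, and then note the "generates everywhere" condition is Zariski-open in $A$ and nonempty on each piece, so a common $A$ exists because $k$ is infinite and finitely many nonempty opens in an affine space of matrices intersect. Set $f_i = \sum_j A_{ij} h_j$; these lie in $I$, have degree $d$ by construction, and $f_0 \wedge \dots \wedge f_{c-1}$ generates $\bigwedge^c(I/I^2)$ at every point, in particular is not the zero element of the module $\bigwedge^c(I/I^2)$ (it is nonzero in every stalk since that stalk is a free rank-one module with this element as a generator).

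The step I expect to be the real obstacle is the global general-position argument: making sure a \emph{single} choice of coefficients $A$ works simultaneously at every point of $Y$, rather than just at a generic point. Over a non-quasi-compact $Y$ one could worry, but here $Y$ is a closed subscheme of $\mathbb{A}^r$, hence quasi-compact, so a finite cover suffices; and the locus of bad $A$ for a fixed $y$ has codimension $\ge 1$, so the bad locus in $\{y\} \times (\text{matrices})$ fibered over $Y$ has dimension $\le \dim Y - 1 + cM < cM$, whence its image in the matrix space is not everything. I would also need to double-check the edge cases $c = 0$ (then $Y = \mathbb{A}^r$, $I = 0$, and the empty wedge is the unit $1 \in \bigwedge^0 = \mathcal{O}_Y \ne 0$, so the statement holds vacuously) and that $I/I^2$ is indeed locally free of rank $c$, which is exactly the smoothness of $Y$.
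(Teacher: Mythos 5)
There is a genuine gap, and in fact the statement you set out to prove is strictly stronger than the lemma and is false. Your plan is to produce $f_0,\dots,f_{c-1}$ of degree exactly $d$ whose wedge is a \emph{nowhere-vanishing} section of the line bundle $\bigwedge^c(I/I^2)$, i.e.\ which generate $I/I^2$ at every point of $Y$. Take $Y = V(y-x^2)\subset\mathbb{A}^2$ and $d=3$ (so $I=(y-x^2)$ is generated in degrees $\le 3$ and $c=1$). Any $f_0\in I$ of degree exactly $3$ has the form $h\cdot(y-x^2)$ with $\deg h=1$; restricting $h$ to $Y\simeq\mathbb{A}^1$ gives a nonconstant polynomial of degree $\le 2$ in the parameter, which has a zero, so $f_0$ fails to generate $I/I^2$ at that point (translating $Y$ off the origin does not help). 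The error is in the general-position dimension count: for fixed $y$ the locus of bad coefficient matrices $A$ has codimension exactly $1$ in $\mathbb{A}^{cM}$ (it is the pullback of a Schubert divisor, the condition that the row space of $A$ meet the $(M-c)$-plane $V_y$), so the incidence variety has dimension $\dim Y + cM - 1$, which is $\ge cM$ as soon as $\dim Y\ge 1$; your displayed inequality $\dim Y - 1 + cM < cM$ holds only for $\dim Y=0$. (The correct general fact is that globally generating a globally generated rank-$c$ sheaf requires $c+\dim Y$ general sections, not $c$.) A secondary problem is the homogenization step: the products $x^\alpha g_j$ with $|\alpha|\ge 1$ do not generate $I$ in the local ring at the origin if $Y$ passes through it, since all coordinates vanish there.

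The lemma only asks that $f_0\wedge\cdots\wedge f_{c-1}$ be a nonzero \emph{element} of the module $\bigwedge^c(I/I^2)$, i.e.\ nonzero in a single stalk, and for that goal your machinery is repairable: fix one point (or one prime) of $Y$ and run the generic-coefficient argument there, where it is sound. This is essentially what the paper does, more economically: localizing $I/I^2$ at one prime yields $c$ elements of $I$ whose wedge is nonzero in that localization; writing them as $R$-linear combinations of the given degree-$\le d$ generators $g_j$ and expanding the wedge multilinearly shows that some $g_{j_0}\wedge\cdots\wedge g_{j_{c-1}}$ is already nonzero in $\bigwedge^c(I/I^2)$; finally each $g_{j_i}$ is replaced by $\ell^{\,d-\deg g_{j_i}}g_{j_i}$ for a degree-one polynomial $\ell\notin I$ (which may have a constant term, and should be chosen not to vanish identically on the locus where the wedge is nonzero), fixing the degrees without killing the wedge.
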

\begin{proof}
  Let $R = k[x_0,\cdots,x_{c-1}]/I$. Then $I/I^2$ is a locally free $R$-module of rank $\codim Y$ by \cite[Theorem 8.17]{Har}. Take any prime ideal $\mathfrak{p} \subset R$. Then $(I/I^2)_\mathfrak{p}$ is a free $R_\mathfrak{p}$-module. Thus, there are $p_0,\cdots,p_{c-1} \in I$ and $q_0,\cdots,q_{c-1} \in R \setminus \mathfrak{p}$ such that
  \[\frac{p_0}{q_0} \wedge \frac{p_1}{q_1} \wedge \cdots \wedge \frac{p_{c-1}}{q_{c-1}} \]
  represents a nonzero element of $\bigwedge^c(I/I^2)_\mathfrak{p}$. Hence,
  \begin{equation}\label{eqn:wedge}
    p_0 \wedge p_1 \wedge \cdots \wedge p_{c-1}
  \end{equation}
  represents a nonzero element of $\bigwedge^c(I/I^2)$.

  Let $g_0,\cdots,g_{b-1}$ be polynomials of degree $\leq d$ which generate $I$. Then each $p_i$ can be written as a $R$-linear combination of $g_i$'s. If we expand (\ref{eqn:wedge}), at least one term should be nonzero in $\bigwedge^c(I/I^2)$. Therefore, we may assume that
  \[g_0 \wedge g_1 \wedge \cdots \wedge g_{c-1} \]
  represents a nonzero element of $\bigwedge^c(I/I^2)$. Let $\ell \not\in I$ be a polynomial of degree 1, and let $f_i = \ell^{d - \deg g_i} g_i$. Then
  \[f_0 \wedge f_1 \wedge \cdots \wedge f_{c-1} \]
  represents a nonzero element of $\bigwedge^c(I/I^2)$ and $\deg f_i = d$ for every $i$.
\end{proof}

\begin{lem}\label{lem:section anticanonical}
  The sheaf $\mathcal{O}_X\left(-K+(d \cdot \codim X - r - 1)H\right)$ has a global section.
\end{lem}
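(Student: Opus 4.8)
The plan is to recast $\mathcal{O}_X\bigl(-K+(d\cdot\codim X-r-1)H\bigr)$ as a twist of the determinant of the conormal sheaf $\mathscr{N}^\vee:=\mathscr{I}_X/\mathscr{I}_X^2$ of $X$ in $\mathbb{P}^r$, and then to show that this twist is effective because it is dominated by the degree-$\leq d$ equations cutting out $X$. Write $c=\codim X$ throughout.

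First I would record the determinant identity. Since $X$ is smooth, $\mathscr{N}^\vee$ is locally free of rank $c$ by \cite[Theorem 8.17]{Har}; hence $\det\mathscr{N}^\vee=\bigwedge^c\mathscr{N}^\vee$ is invertible and $\det\bigl(\mathscr{N}^\vee(d)\bigr)=(\det\mathscr{N}^\vee)(dc)$. Taking top exterior powers in the conormal exact sequence $0\to\mathscr{N}^\vee\to\Omega_{\mathbb{P}^r}|_X\to\Omega_X\to 0$ gives $\det\mathscr{N}^\vee\otimes\omega_X\cong\omega_{\mathbb{P}^r}|_X\cong\mathcal{O}_X(-(r+1)H)$, so $\det\mathscr{N}^\vee\cong\mathcal{O}_X(-K-(r+1)H)$ and therefore
\[
  \det\bigl(\mathscr{N}^\vee(d)\bigr)\;\cong\;\mathcal{O}_X\bigl(-K+(dc-r-1)H\bigr).
\]
It thus suffices to produce a nonzero global section of $\det\bigl(\mathscr{N}^\vee(d)\bigr)$.

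Next I would build the section. Homogeneous generators $g_1,\dots,g_s$ of $\mathscr{I}_X$ of degree $\leq d$ yield a surjection $\bigoplus_j\mathcal{O}_X(-\deg g_j)\twoheadrightarrow\mathscr{N}^\vee$; applying $\bigwedge^c$ and twisting by $\mathcal{O}_X(dc\cdot H)$ gives a surjection
\[
  \bigoplus_{1\leq j_1<\cdots<j_c\leq s}\mathcal{O}_X\Bigl(\,dc-\textstyle\sum_i\deg g_{j_i}\Bigr)\;\twoheadrightarrow\;\det\bigl(\mathscr{N}^\vee(d)\bigr).
\]
Every summand on the left is $\mathcal{O}_X(m)$ with $m=dc-\sum_i\deg g_{j_i}\geq 0$, hence globally generated, so the source, and therefore the target $\det(\mathscr{N}^\vee(d))$, is globally generated; being a line bundle on the nonempty scheme $X$ it then has a nonzero global section. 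Equivalently and more explicitly: choosing an affine chart $Y_a:=X\cap\{x_a\neq0\}\subset\mathbb{A}^r$, \Cref{lem:section of det} furnishes $f_0,\dots,f_{c-1}$ of degree exactly $d$ in the ideal of $Y_a$ with $f_0\wedge\cdots\wedge f_{c-1}\neq 0$ in $\bigwedge^c\mathscr{N}^\vee|_{Y_a}$; rehomogenizing the $f_i$ to degree-$d$ forms $F_i\in(I_X)_d$ (using that $I_X$ is saturated) and taking the wedge of their images in $\mathscr{N}^\vee(d)$ produces a global section of $\det(\mathscr{N}^\vee(d))$ that restricts on $Y_a$ to $x_a^{dc}(f_0\wedge\cdots\wedge f_{c-1})\neq 0$, and so is nonzero.

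The conceptual crux is the reformulation in the first two steps — recognizing the anticanonical-plus-hyperplanes bundle as $\det(\mathscr{N}^\vee(d))$, and then that $\mathscr{N}^\vee$ is generated in degree $\leq d$ by the defining equations; once this is in place, the existence of a section is essentially formal. The only place demanding care is the explicit route via \Cref{lem:section of det}: one must verify that the forms it outputs, after rehomogenization, still lie in the saturated ideal $I_X$, and that the top wedge does not vanish identically on $X$ — which is exactly what the invertibility of $x_a$ on $Y_a$ secures.
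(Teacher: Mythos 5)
Your proposal is correct. The first half --- identifying $\mathcal{O}_X(-K+(d\cdot\codim X-r-1)H)$ with $\bigwedge^c(\mathscr{I}_X/\mathscr{I}_X^2)(dc)$ via the conormal sequence and \cite[Theorem 8.17]{Har} --- is exactly the paper's first step. Where you genuinely diverge is in producing the section: the paper works chart by chart, invoking \Cref{lem:section of det} on an affine piece to pick $c$ degree-$d$ elements of the ideal whose wedge is nonzero, and then checks that the homogenized wedges glue. Your primary argument instead takes the surjection $\bigoplus_j\mathcal{O}_X(-\deg g_j)\twoheadrightarrow\mathscr{I}_X/\mathscr{I}_X^2$ coming from the degree-$\leq d$ defining equations, applies $\bigwedge^c$, and observes that every summand $\mathcal{O}_X(dc-\sum_i\deg g_{j_i})$ has nonnegative twist; since quotients of globally generated sheaves are globally generated, this proves the stronger statement that the line bundle is globally generated (which is all that is used downstream anyway, via \Cref{thm:Fujita}), with no local analysis and no gluing. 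This is cleaner and, in effect, absorbs \Cref{lem:section of det} into a one-line sheaf-theoretic observation; the paper's route buys only the explicitness of exhibiting the section as a wedge of specific forms. One small quibble in your secondary, explicit route: the parenthetical ``using that $I_X$ is saturated'' is not the relevant point --- the rehomogenizations $x_a^{d-\deg g_i}\cdot(\text{homogenization of }g_i)$ lie in the ideal generated by the defining equations, hence in $I_X$, regardless of saturation; what matters is only that they cut out $\mathscr{I}_X$ as an ideal sheaf and that invertibility of $x_a$ on $U_a$ keeps the wedge nonzero there.
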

\begin{proof}
  Let $c = \codim X$. Since $X$ is smooth, there is an exact sequence
  \[ 0 \rightarrow \mathscr{I}_X/\mathscr{I}_X^2 \rightarrow
    \Omega_{\mathbb{P}^r} \otimes \mathcal{O}_X \rightarrow \Omega_X \rightarrow 0, \]
  and $\mathscr{I}_X/\mathscr{I}_X^2$ is locally free of rank $c$ \cite[Theorem 8.17]{Har}. Taking the highest exterior power gives
  \begin{align*}
    \omega_{\mathbb{P}^r}|_X &\simeq
    \bigwedge\nolimits^c \left(\mathscr{I}_X/\mathscr{I}_X^2\right) \otimes \omega_X\\
    \omega_X^{-1}(-r-1) &\simeq \bigwedge\nolimits^c \left(\mathscr{I}_X/\mathscr{I}_X^2\right).
  \end{align*}
  Let $U_i\subset X$ be the affine open set given by $x_i \neq 0$. Then there exist polynomials $f_0,\cdots,f_{c-1}$ of degree $d$ such that
  \[ f_0(x_0/x_i,\cdots,x_r/x_i) \wedge \cdots \wedge f_{c-1}(x_0/x_i,\cdots,x_r/x_i) \]
  represents a nonzero section of $\bigwedge^c(\mathscr{I}_X/\mathscr{I}_X^2)|_{U_i}$ by \Cref{lem:section of det}. Take another $U_j \subset X$ given by $x_j \neq 0$. Then two sections
  \[x_i^d f_0(x_0/x_i,\cdots,x_r/x_0) \wedge \cdots \wedge
    x_i^d f_{c-1}(x_0/x_i,\cdots,x_r/x_i)\]
  and
  \[x_j^d f_0(x_0/x_j,\cdots,x_r/x_j) \wedge \cdots \wedge
    x_j^d f_{c-1}(x_0/x_j,\cdots,x_r/x_j)\]
  give same restrictions in $\bigwedge^c(\mathscr{I}_X/\mathscr{I}_X^2(d))|_{U_i \cap U_j}$. Because $i$ and $j$ are arbitrary, the sections above extend to a global section of
  \[ \omega_X^{-1}(d\cdot\codim X-r-1) \simeq
    \bigwedge\nolimits^c \left(\mathscr{I}_X/\mathscr{I}_X^2(d)\right). \qedhere \]
\end{proof}

\begin{lem}\label{lem:numerical condition}
  Let $D$ be a divisor on $X$ numerically equivalent to 0.\footnote{The condition `numerically equivalent to 0' can be replaced by `numerically effective' due to Kleiman's criterion of ampleness \cite[Chapter IV \textsection 2 Theorem 2]{Kle}.} Then
  \begin{enumerate}[label=(\alph*)]
  \item $\mathcal{O}_X(D+((d-1)\codim X)H)$ is generated by global sections, and
  \item $\mathcal{O}_X(D+((d-1)\codim X+1)H)$ is very ample.
  \end{enumerate}
\end{lem}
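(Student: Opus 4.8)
The plan is to express each of the two line bundles as the tensor product of a sheaf produced by \Cref{lem:section anticanonical} with one of the sheaves furnished by Keeler's theorem (\Cref{thm:Fujita}), and then to invoke two elementary facts: that ``globally generated'' is preserved by tensor products, and that the tensor product of a very ample sheaf with a globally generated sheaf is very ample.

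First I would upgrade \Cref{lem:section anticanonical}: the sheaf $\mathcal{O}_X\!\left(-K+(d\cdot\codim X-r-1)H\right)$ is not merely generically nonzero but \emph{globally generated}. Set $c=\codim X$. As in the proof of \Cref{lem:section anticanonical}, there is an isomorphism $\bigwedge^c(\mathscr{I}_X/\mathscr{I}_X^2(d))\simeq \mathcal{O}_X(-K+(dc-r-1)H)$. Since $I_X$ is generated by finitely many forms of degrees $d_i\leq d$, restricting a surjection $\bigoplus_i\mathcal{O}_{\mathbb{P}^r}(-d_i)\twoheadrightarrow\mathscr{I}_X$ to $X$ and twisting by $\mathcal{O}_X(d)$ yields a surjection $\bigoplus_i\mathcal{O}_X(d-d_i)\twoheadrightarrow\mathscr{I}_X/\mathscr{I}_X^2(d)$ whose source is globally generated, because each $d-d_i\geq 0$ and $\mathcal{O}_X(m)$ is globally generated for $m\geq 0$. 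Hence $\mathscr{I}_X/\mathscr{I}_X^2(d)$ is globally generated, and applying $\bigwedge^c$ to a surjection $\mathcal{O}_X^N\twoheadrightarrow\mathscr{I}_X/\mathscr{I}_X^2(d)$ shows $\bigwedge^c(\mathscr{I}_X/\mathscr{I}_X^2(d))$ is globally generated.

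Now let $D$ be numerically equivalent to $0$ (the argument is identical if $D$ is merely nef, cf. the footnote). Then $D+H$ is ample, since ampleness depends only on the numerical class and $D+H\equiv H$ (for $D$ nef, use that nef plus ample is ample). Writing $n=\dim X$, so that $n+c=r$, a direct bookkeeping gives
\[
  \mathcal{O}_X\big(D+((d-1)c)H\big)\;\simeq\;\mathcal{O}_X\big(K+nH+(D+H)\big)\otimes\mathcal{O}_X\big(-K+(dc-r-1)H\big),
\]
because $n+(dc-r-1)+1=dc-c=(d-1)c$. The first factor is globally generated by \Cref{thm:Fujita}(a) applied with the ample divisor $L=D+H$, the second is globally generated by the previous paragraph, and a tensor product of globally generated sheaves is globally generated; this is (a). For (b) the same computation yields
\[
  \mathcal{O}_X\big(D+((d-1)c+1)H\big)\;\simeq\;\mathcal{O}_X\big(K+(n+1)H+(D+H)\big)\otimes\mathcal{O}_X\big(-K+(dc-r-1)H\big),
\]
where now the first factor is very ample by \Cref{thm:Fujita}(b) with $L=D+H$; composing the closed immersion it defines with the morphism defined by the globally generated second factor and with the Segre embedding shows the tensor product is very ample, giving (b).

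The only step with genuine content is the global generation of $\bigwedge^{\codim X}(\mathscr{I}_X/\mathscr{I}_X^2(d))$; everything else is the two formal facts about tensor products and the arithmetic identity $\dim X+\codim X=r$, which is exactly what makes the twist $d\codim X-r-1$ line up with the shifts $\dim X$ and $\dim X+1$ in Keeler's theorem. I do not anticipate a serious obstacle, but I would emphasize one point that is easy to overlook: the proof uses the full strength of \Cref{thm:Fujita}, namely that $L$ need only be ample rather than a multiple of $H$, since this is what allows the numerically trivial (or nef) divisor $D$ to be absorbed into the ample part $L=D+H$.
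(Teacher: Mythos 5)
Your proposal is correct, and it follows the same decomposition as the paper: write the divisor as Keeler's divisor $K+(\dim X)H+(D+H)$ (resp.\ $K+(\dim X+1)H+(D+H)$) plus $-K+(d\codim X-r-1)H$, absorb the numerically trivial $D$ into the ample summand $L=D+H$ of \Cref{thm:Fujita}, and use the arithmetic $\dim X+\codim X=r$. The genuinely valuable difference is your first paragraph. The paper's own proof invokes only \Cref{lem:section anticanonical}, i.e.\ that $\mathcal{O}_X(-K+(d\codim X-r-1)H)$ \emph{has a global section}, and then concludes that the sum of a globally generated divisor and an effective one is globally generated (resp.\ that very ample plus effective is very ample). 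That implication is false in general --- on a curve of positive genus, $\mathcal{O}_C\otimes\mathcal{O}_C(p)=\mathcal{O}_C(p)$ is effective but not globally generated --- so as written the paper's argument only yields that $\mathcal{O}_X(D+((d-1)\codim X)H)$ has a global section, which happens to be all that \Cref{thm:bound by Hilb} needs, but does not establish the lemma as stated. Your upgrade of \Cref{lem:section anticanonical} to \emph{global generation} of $\bigwedge^{\codim X}(\mathscr{I}_X/\mathscr{I}_X^2(d))$, via the surjection $\bigoplus_i\mathcal{O}_X(d-d_i)\twoheadrightarrow\mathscr{I}_X/\mathscr{I}_X^2(d)$ with all twists $d-d_i\geq 0$, is exactly the missing ingredient: combined with the standard facts that globally generated $\otimes$ globally generated is globally generated and very ample $\otimes$ globally generated is very ample, it proves both (a) and (b) in full. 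So your route is not just an alternative --- it repairs a real gap in the printed proof.
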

\begin{proof}
  The divisor $D+H$ is ample, since ampleness is a numerical property. Then $K+(\dim X)H + (D+H)$ is generated by global section by \Cref{thm:Fujita}. Thus, \Cref{lem:section anticanonical} implies that
  \begin{align*}
    &(K+(\dim X)H + (D+H)) + (-K+(d \cdot \codim X - r - 1)H) \\
    =\, &D+((d-1)\codim X)H
  \end{align*}
  is generated by global sections. Similarly, $D+((d-1)\codim X+1)H$ is very ample.
\end{proof}

%This section is concluded by the theorem below, which is an immediate consequence of \Cref{thm:bound by Hilb} and \Cref{lem:numerical condition}.

\begin{thm}\label{cor:bound by hilb}
  Let $m = (d-1) \codim X$. Then
  \[\# \conn(\Hilb_{\HP_{m H}} X) \geq \# (\NS X)_\tor.\]
\end{thm}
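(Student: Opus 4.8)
The plan is to deduce this directly from \Cref{thm:bound by Hilb}, applied to the divisor $F = mH$ with $m = (d-1)\codim X$, together with \Cref{lem:numerical condition} and the Hilbert-polynomial identity recalled in \Cref{sec:numerical}.

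First I would verify the hypothesis of \Cref{thm:bound by Hilb}. Let $D$ be any divisor on $X$ numerically equivalent to $0$. By \Cref{lem:numerical condition}(a) the invertible sheaf $\mathcal{O}_X(D + ((d-1)\codim X)H) = \mathcal{O}_X(F+D)$ is generated by its global sections; since $X$ is nonempty this forces $\Gamma(\mathcal{O}_X(F+D)) \neq 0$, i.e.\ $F+D$ is linearly equivalent to an effective divisor. So \Cref{thm:bound by Hilb} applies with this $F$, giving
\[ \# \conn\!\big(\Hilb_Q X \cap \EffDiv X\big) \;\geq\; \#(\NS X)_\tor, \qquad Q := \HP_{\mathcal{O}_X} - \HP_{\mathcal{O}(-F)}. \]
Next I would identify $Q$ with $\HP_{mH}$: the hyperplane section $H$ is an effective divisor on $X$, and (since $d \geq 1$ and $\codim X \geq 0$ give $m \geq 0$) so is $mH$; the identity $\HP_{E} = \HP_{\mathcal{O}_X} - \HP_{\mathcal{O}(-E)}$ for an effective divisor $E$ then yields $Q = \HP_{mH}$. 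Finally, because $\EffDiv X$ is open and closed in $\Hilb X$, the subscheme $\Hilb_Q X \cap \EffDiv X$ is a union of connected components of $\Hilb_Q X$, so
\[ \# \conn\!\big(\Hilb_{\HP_{mH}} X\big) = \# \conn\!\big(\Hilb_Q X\big) \;\geq\; \# \conn\!\big(\Hilb_Q X \cap \EffDiv X\big) \;\geq\; \#(\NS X)_\tor, \]
which is the assertion.

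There is essentially no serious obstacle: the statement is a bookkeeping corollary of the results already established. The only points that warrant a word of care are (i) passing from ``globally generated'' to ``admits a nonzero global section'', which uses only that $X \neq \emptyset$; and (ii) the degenerate cases $d = 1$ or $\codim X = 0$, where $m = 0$ and $mH$ is the empty divisor, so that $\Hilb_{\HP_{mH}} X$ is a single point — but in those cases $X$ is a linear subspace of $\mathbb{P}^r$, whence $(\NS X)_\tor$ is trivial and the inequality holds vacuously.
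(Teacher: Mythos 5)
Your proposal is correct and follows exactly the paper's route: the paper's proof is the single line ``By \Cref{lem:numerical condition}(a), we may apply \Cref{thm:bound by Hilb} to $F=mH$,'' and your write-up simply makes explicit the bookkeeping steps (global generation $\Rightarrow$ nonzero section, the identification $Q = \HP_{mH}$, and that $\Hilb_Q X \cap \EffDiv X$ is a union of connected components of $\Hilb_Q X$) that the paper leaves implicit. No issues.
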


\begin{proof}
  By \Cref{lem:numerical condition}(a), we may apply \Cref{thm:bound by Hilb} to $F=mH$.
\end{proof}

\section{Irreducible Components of Hilbert Schemes}\label{sec:hilbert}
The aim of this section is to give an explicit upper bound on $\#(\NS X)_{\tor}$ for a smooth projective variety $X$. \Cref{cor:bound by hilb} implies that it suffices to give an upper bound on the number of connected components of some Hilbert scheme. Recall the definition of Castelnuovo–Mumford regularity and Gotzmann numbers.

\begin{defi}
  A coherent sheaf $\mathscr{F}$ over $\mathbb{P}^r$ is $m$-regular if and only if
  \[ H^i\left(\mathbb{P}^r, \mathscr{F}(m-i)\right) = 0 \]
  for every integer $i > 0$. The smallest such $m$ is called the Castelnuovo–Mumford regularity of $\mathscr{F}$.
\end{defi}

\begin{defi}
  Let $P$ be the Hilbert polynomial of some ideal $I \subset k[x_0,\cdots,x_r]$. The Gotzmann number $\varphi(P)$ of $P$ is defined as 
  \begin{align*}
    \varphi(P) = \inf \{ m \,|\,& \mathscr{I}_Z  \text{ is $m$-regular for every} \\
        &\text{closed subvariety $Z \subset \mathbb{P}^r$ with Hilbert polynomial $P$} \}.
  \end{align*}
\end{defi}

Hilbert schemes can be explicitly described as a closed subscheme of a Grassmannian, by Gotzmann \cite{Got}.

%Given an integer $t \geq \varphi(P)$, Gotzmann\cite[Section 3]{Got} explicitly constructed a closed embedding
%\[\Hilb_{\monom{t}{r}-P(t)}\mathbb{P}^r \hookrightarrow \Gr\left(P(t),\monom{t}{r}\right).\]
%Hence, $\varphi(P)$ bounds a dimension of a possible Grassmannian containing a Hilbert scheme.

\begin{thm}[{Gotzmann}]\label{thm:explicit construction of Hilb P}
  Let $P$ be the Hilbert polynomial of some ideal $I \subset k[x_0,\cdots,x_r]$. Assume that $t \geq \varphi(P)$. Then
  \begin{align*}
    \imath_t : \Hilb_{\monom{t}{r}-P(t)} \mathbb{P}^r &\rightarrow \Gr(P(t),k[x_0,\cdots,x_r]_t) \\
    [Y] &\mapsto \Gamma(\mathscr{I}_Y(t))
  \end{align*}
  gives a well-defined closed immersion. Moerover, the image is the collection of linear spaces $T \subset k[x_0,\cdots,x_r]_t$ such that
  \begin{enumerate}[label=(\alph*)]
  \item $\dim \left(x_0T+\cdots+x_rT\right) \leq P(t+1)$.
  \end{enumerate}
\end{thm}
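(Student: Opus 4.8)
The plan is to realize $\Hilb_{\binom{t+r}{r}-P(t)}\mathbb{P}^r$ as a representing object of a functor, then show that functor is cut out inside the Grassmannian functor by the stated linear-algebra condition. First I would recall Gotzmann's persistence theorem: if $I\subset k[x_0,\dots,x_r]$ is an ideal generated in degrees $\le t$ and $\dim(I_{t+1})=\dim(k[x]_{t+1})-P(t+1)$ where $P$ is the Hilbert polynomial of a saturated ideal with that Hilbert polynomial, then the Hilbert function of $k[x]/I$ agrees with $P$ for all degrees $\ge t$. The upshot is that a linear subspace $T\subset k[x]_t$ with $\dim T = P(t)$ and $\dim(x_0T+\cdots+x_rT)\le P(t+1)$ automatically generates an ideal whose truncation $\bigoplus_{s\ge t}(\text{ideal generated by }T)_s$ has Hilbert function matching $P$ in all degrees $\ge t$, hence (after saturation) defines a closed subscheme $Y\subset\mathbb{P}^r$ with Hilbert polynomial $P$, i.e. Hilbert polynomial $\binom{t+r}{r}-P(t)$ for $\mathscr{I}_Y$ read the other way.

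\medskip
\noindent Next I would check that the assignment $[Y]\mapsto \Gamma(\mathscr{I}_Y(t))$ is well-defined: since $t\ge\varphi(P)$, the ideal sheaf $\mathscr{I}_Y$ is $t$-regular for every $Y$ with Hilbert polynomial $P$, so $H^1(\mathbb{P}^r,\mathscr{I}_Y(t-1))=0$ and more importantly $\dim\Gamma(\mathscr{I}_Y(t))$ equals the Hilbert polynomial value, which is constant $=P(t)$ (here I should be careful about the indexing convention the theorem uses — $\Hilb$ of subschemes of $\mathbb{P}^r$ with a prescribed Hilbert polynomial, and $P$ playing the role of the Hilbert polynomial of the ideal). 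Regularity also gives that this subspace of degree-$t$ forms, together with $x_0,\dots,x_r$-multiplication, recovers $\mathscr{I}_Y$ in all higher degrees, so the map is injective on points. To upgrade to a closed immersion of schemes one works with the universal flat family over $\Hilb$: the pushforward of the twisted universal ideal sheaf is locally free of rank $P(t)$ by cohomology-and-base-change (using $t$-regularity fiberwise), giving a morphism to the Grassmannian; that it is a closed immersion follows because the functor of points of $\Hilb_{\binom{t+r}{r}-P(t)}\mathbb{P}^r$ is, via Gotzmann persistence, identified with the closed subfunctor of the Grassmannian functor carved out by the rank condition $\dim(x_0T+\cdots+x_rT)\le P(t+1)$ — this inequality is a determinantal (closed) condition on the tautological subbundle.

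\medskip
\noindent The main obstacle is proving that condition (a), $\dim(x_0T+\cdots+x_rT)\le P(t+1)$, is not merely necessary but sufficient to land in the image — that is, the heart of Gotzmann's persistence theorem. One direction is easy: for an actual $Y$, flatness forces $\dim\Gamma(\mathscr{I}_Y(t+1))=P(t+1)$ and $x_0\Gamma(\mathscr{I}_Y(t))+\cdots+x_r\Gamma(\mathscr{I}_Y(t))\subseteq\Gamma(\mathscr{I}_Y(t+1))$, giving ``$\le$''. The converse requires the combinatorial input: Gotzmann's estimate says that for any linear space $T$ of dimension $P(t)$ in degree $t$, the dimension of $x_0T+\cdots+x_rT$ is at least the ``Macaulay bound'' $P(t+1)$, with equality forcing the generated ideal to grow as slowly as possible in every subsequent degree, i.e. with Hilbert function exactly $P(s)$ for all $s\ge t$; combined with $t\ge\varphi(P)$ (so that persistence kicks in and the resulting saturated ideal has the right Hilbert polynomial, not just Hilbert function) this produces the subscheme $Y$. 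I would cite Gotzmann's original paper, or a standard reference such as Iarrobino–Kanev or Bruns–Herzog, for the persistence and regularity statements rather than reprove them, and then assemble the functorial argument above.

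\medskip
\noindent I should also record the minor point that the well-definedness of $\varphi(P)$ — finiteness of the infimum — is itself a theorem (Gotzmann regularity: any saturated ideal with Hilbert polynomial $P$ is $m$-regular for $m$ bounded explicitly in terms of the coefficients of $P$), which is what makes the hypothesis $t\ge\varphi(P)$ achievable; the explicit bound on $\varphi(P)$ will matter later when extracting the numerical estimate on $\#(\NS X)_\tor$, but for the statement at hand it suffices that such a $t$ exists.
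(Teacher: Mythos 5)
The paper offers no argument of its own here --- its ``proof'' is just the citation to Section~3 of Gotzmann's paper --- and your sketch is a correct outline of that standard argument: $t$-regularity from $t\ge\varphi(P)$ makes $\Gamma(\mathscr{I}_Y(t))$ have constant dimension $P(t)$ and gives the Grassmannian morphism via cohomology-and-base-change, while Macaulay's lower bound on ideal growth plus Gotzmann persistence shows the determinantal condition $\dim(x_0T+\cdots+x_rT)\le P(t+1)$ cuts out exactly the image as a closed subfunctor. So your proposal matches the intended (cited) proof; no issues.
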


\begin{proof}
  See \cite[Section 3]{Got}.
\end{proof}

Let $X \hookrightarrow \mathbb{P}^r$ be a projective variety and $Q$ be a polynomial. Then there is the natural closed embedding
  \[ \Hilb_Q X \hookrightarrow \Hilb_Q \mathbb{P}^r. \]

\begin{thm}\label{thm:explicit construction}
  Use the notation in \Cref{thm:explicit construction of Hilb P}. Let $X \hookrightarrow \mathbb{P}^r$ be a projective variety defined by polynomials of degree $\leq d$. Assume that $t \geq \max \{\varphi(P),d\}$. Then the image of
  \[\Hilb_{\monom{t}{r}-P(t)} X \]
    %\hookrightarrow \Gr(P(t),k[x_0,\cdots,x_r]_t)\]
  under $\imath_t$ is the collection of linear spaces $T \subset k[x_0,\cdots,x_r]_t$ such that
  \begin{enumerate}[label=(\alph*)]
  \item $\dim \left(x_0T+\cdots+x_rT\right) \leq P(t+1)$ and
  \item $\Gamma(\mathscr{I}_X(t)) \subset T$.
  \end{enumerate}
\end{thm}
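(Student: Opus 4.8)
The plan is to bootstrap from Gotzmann's description of $\Hilb_Q\mathbb{P}^r$ (\Cref{thm:explicit construction of Hilb P}) together with the elementary observation that, for a fixed $t\geq d$, containment of a closed subscheme $Y\subseteq\mathbb{P}^r$ in $X$ is detected in degree $t$. Write $Q$ for the Hilbert polynomial $\monom{s}{r}-P(s)$ (in the variable $s$), so that $\Hilb_{\monom{t}{r}-P(t)}=\Hilb_Q$, and recall that $\Hilb_Q X$ is naturally a closed subscheme of $\Hilb_Q\mathbb{P}^r$ whose points are exactly the $[Y]$ with $Y\subseteq X$. Since $t\geq\varphi(P)$, \Cref{thm:explicit construction of Hilb P} identifies $\imath_t$ with a closed immersion of $\Hilb_Q\mathbb{P}^r$ onto the locus of $T\subseteq k[x_0,\cdots,x_r]_t$ described by condition (a), with $\imath_t([Y])=\Gamma(\mathscr{I}_Y(t))$. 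Hence the image of $\Hilb_Q X$ already lies in the locus cut out by (a), and it remains to show: for $[Y]\in\Hilb_Q\mathbb{P}^r$ with $T:=\Gamma(\mathscr{I}_Y(t))$, one has $Y\subseteq X$ if and only if $\Gamma(\mathscr{I}_X(t))\subseteq T$, i.e.\ if and only if (b) holds.

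The forward implication is immediate: if $Y\subseteq X$ then $\mathscr{I}_X\subseteq\mathscr{I}_Y$ as subsheaves of $\mathcal{O}_{\mathbb{P}^r}$, hence $\Gamma(\mathscr{I}_X(t))\subseteq\Gamma(\mathscr{I}_Y(t))=T$.

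For the converse, suppose $\Gamma(\mathscr{I}_X(t))\subseteq T$. Choose homogeneous polynomials $g_1,\cdots,g_b$ of degree $\leq d$ with $X=V(g_1,\cdots,g_b)$; then each $g_i\in I_X$, and because $\deg g_i\leq d\leq t$ we get $\mathfrak{m}^{\,t-\deg g_i}g_i\subseteq(I_X)_t=\Gamma(\mathscr{I}_X(t))\subseteq T=\Gamma(\mathscr{I}_Y(t))$, where $\mathfrak{m}=(x_0,\cdots,x_r)$. Since $\bigoplus_n\Gamma(\mathscr{I}_Y(n))$ is the saturated ideal of $Y$, and a saturated ideal $J$ satisfies $(J:\mathfrak{m}^N)=J$, this forces $g_i\in\bigoplus_n\Gamma(\mathscr{I}_Y(n))$, i.e.\ $g_i$ vanishes on $Y$; equivalently one can argue chart by chart, noting $x_j^{\,t-\deg g_i}g_i\in\mathscr{I}_Y(U_j)$ while $x_j$ is a unit on $U_j$. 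Hence $Y\subseteq V(g_i)$ for every $i$, so $Y\subseteq X$. This yields the claimed equality of images as sets of closed points; if the scheme-theoretic statement is wanted, the same argument carried out over an arbitrary base $S$ (using the functorial form of \Cref{thm:explicit construction of Hilb P}) identifies $\Hilb_Q X$, inside $\Gr(P(t),k[x_0,\cdots,x_r]_t)$, with the intersection of the closed subscheme defined by (a) with the sub-Grassmannian defined by (b).

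The only step that uses the hypothesis $t\geq d$ — as opposed to merely $t\geq\varphi(P)$, which is all that \Cref{thm:explicit construction of Hilb P} requires — is the converse implication, and that is essentially the only point requiring care: one needs the degree-$\leq d$ equations of $X$ to already appear among the degree-$t$ forms vanishing on $X$, after which saturatedness of the ideal of $Y$ (automatic for a closed subscheme of $\mathbb{P}^r$) finishes the job. Everything else is a formal consequence of Gotzmann's theorem and the identification of $\Hilb_Q X$ with the locus of subschemes contained in $X$.
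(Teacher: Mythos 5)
Your proof is correct and is essentially the argument the paper relies on: the paper simply defers to \cite[Lemma 8.23]{PTL}, whose content is exactly your reduction to Gotzmann's theorem plus the observation that, since $X$ is cut out in degrees $\leq d \leq t$ and the ideal $\bigoplus_n\Gamma(\mathscr{I}_Y(n))$ is saturated, the containment $Y\subseteq X$ is equivalent to $\Gamma(\mathscr{I}_X(t))\subseteq\Gamma(\mathscr{I}_Y(t))$. Your handling of the converse direction (multiplying the degree-$\leq d$ generators up to degree $t$ and using saturation, or equivalently the chart-by-chart argument) is precisely the point where the hypothesis $t\geq d$ enters, and you have identified and justified it correctly.
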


\begin{proof}
  See the proof of \cite[Lemma 8.23]{PTL} 
\end{proof}

% \begin{thm}
%   Let $X \hookrightarrow \mathbb{P}^r$ be a projective variety defined by polynomials of degree $\leq d$. Take a polynomial $P$ and an integer $t \geq \max\{\varphi(P),d\}$. Let $Z$ be the collection of subvector spaces $T \subset k[x_0,\cdots,x_r]_t$ of dimension $P(t)$ satisfying
%   \begin{enumerate}[label=(\alph*)]
%   \item $\dim \left(x_0T+\cdots+x_rT\right) \leq P(t+1)$ and
%   \item $\Gamma(\mathscr{I}_X(t)) \subset T$.
%   \end{enumerate}
%   Then this gives a closed embedding
%   \[Z \simeq \Hilb_{\monom{t}{r}-P(t)} X \hookrightarrow
%     \Gr\left(P(t),k[x_0,\cdots,x_r]_t\right).\]
% \end{thm}

% \begin{proof}
%   The first condition gives a closed embedding
%   \[ \Hilb_{\monom{t}{r}-P(t)} \mathbb{P}^r \hookrightarrow \Gr(P(t),k[x_0,\cdots,x_r]_t) \]
%   by \cite[Section 3]{Got}. The second condition gives a closed embedding
%   \[ \Hilb_{\monom{t}{r}-P(t)} X \hookrightarrow \Hilb_{\monom{t}{r}-P(t)} \mathbb{P}^r \]
%   by the proof of \cite[Lemma 8.23]{PTL}.
% \end{proof}

Therefore, an upper bound on Gotzmann numbers will give an explicit construction of a Hilbert scheme. Such a bound is given by Hoa {\cite[Theorem 6.4(i)]{Hoa}}.
 
\begin{thm}[Hoa]\label{thm:Gotzmann bound}
  Let $I\subset k[x_0,\cdots,x_r]$ be an nonzero ideal generated by homogeneous polynomials of degree at most $d \geq 2$. Let $a$ be the Krull dimension of  $k[x_0,\cdots,x_r]/I$. Then
  \[ \varphi(\HP_I) \leq \left( \frac{3}{2} d^{r+1-a} + d \right)^{a 2^{a-1}}. \]
\end{thm}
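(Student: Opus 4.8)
The plan is to separate the statement into a purely combinatorial reduction and a regularity estimate. Write $S=k[x_0,\dots,x_r]$, let $Z=\Proj(S/I)$, so $\dim Z=a-1$, and set $Q=\HP_S-\HP_I=\HP_{\mathcal O_Z}$. By a theorem of Gotzmann (see \cite{Got}), $\varphi(\HP_I)$ equals the Castelnuovo--Mumford regularity of the unique saturated lexicographic ideal $L\subset S$ with $\HP_L=\HP_I$, and this regularity is the number $s$ of summands in the Macaulay representation
\begin{equation*}
  Q(t)=\binom{t+a_1}{a_1}+\binom{t+a_2-1}{a_2}+\dots+\binom{t+a_s-s+1}{a_s},\qquad a_1\ge a_2\ge\dots\ge a_s\ge 0,
\end{equation*}
in which $a_1=\deg Q=a-1$. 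So it is enough to bound $s$ in terms of the finitely many coefficients of $Q$, and then to bound those coefficients.

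First I would bound the leading coefficient of $Q$, i.e.\ the degree of the $(a-1)$--dimensional part of $Z$, by $d^{\,r+1-a}$. Since $I$ has height $r+1-a$ and its degree--$d$ piece $I_d$ contains $S_{d-\deg g}\cdot g$ for every generator $g$ of $I$ of degree $\le d$, one has $V(I_d)=V(I)$; choosing $r+1-a$ generic forms of $I_d$ one at a time gives a regular sequence, hence a complete intersection $J\subseteq I$ with $S/J$ Cohen--Macaulay of dimension $a$ and $\deg(S/J)=d^{\,r+1-a}$. (Replacing arbitrary generators of $I$ by forms of degree exactly $d$ is the device already used in \Cref{lem:section of det}.) The surjections $(S/J)_t\twoheadrightarrow(S/I)_t$ then force $\deg \HP_{\mathcal O_Z}=a-1$ with leading coefficient at most that of $\HP_{S/J}$, i.e.\ degree of $Z$ at most $d^{\,r+1-a}$. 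The remaining, lower--order coefficients of $Q$ must also be bounded; this is the classical fact that a subscheme cut out by equations of bounded degree has Hilbert polynomial in a bounded family, and it is equivalent to bounding the regularity of $S/I$ itself.

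Granting such bounds, I would recover $s$ by reading the Macaulay representation off one degree at a time: the number of summands with $a_i=a-1$ is exactly the (normalized) leading coefficient of $Q$; after subtracting them, one is left with a numerical polynomial of degree $a-2$ whose leading coefficient is controlled by a constant times the square of the previous one plus a bounded correction; iterating $a$ times gives $s\lesssim (d^{\,r+1-a})^{\,2^{a-1}}$ up to lower--order factors, which is where both the exponent $a2^{a-1}$ and the constants $\tfrac32$ and $d$ come from (the latter absorbing the lower coefficients of $Q$ and the passage to forms of degree exactly $d$). The main obstacle is the middle step --- bounding every coefficient of $Q$, equivalently the regularity of $S/I$ --- which is genuinely doubly exponential in $r$ by the Mayr--Meyer and Bayer--Stillman examples and is therefore responsible for the $2^{a-1}$. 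I expect the cleanest route is an induction on $a$: a generic hyperplane section $Z\cap H$ has dimension one less, and comparing the reverse--lexicographic generic initial ideals of $I$ and of its image in $S/(\ell)$ controls the regularity up to a saturation--type correction by $(I:\ell^\infty)$; the accumulation of these corrections over the $a$ steps of the induction produces precisely the iterated squaring matching Hoa's bound.
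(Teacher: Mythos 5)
This theorem is not proved in the paper at all: it is quoted from Hoa with a citation, so there is no internal argument to measure your sketch against. That said, your proposal does identify the correct architecture of a proof of this kind: reduce $\varphi(\HP_I)$ to the number $s$ of summands in the Gotzmann--Macaulay representation of the quotient's Hilbert polynomial via Gotzmann's regularity theorem, bound the leading coefficient (the degree of the top-dimensional part of $Z$) by $d^{\,r+1-a}$ via a generic complete intersection inside $I_d$, and then control the remaining coefficients by descending induction on dimension through hyperplane sections. The first two of these steps are sound as written, and the device of replacing generators by forms of degree exactly $d$ is indeed the same one used in \Cref{lem:section of det}.

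However, there is a genuine gap at the center of the argument: everything after ``Granting such bounds'' is assertion rather than proof. The claim that each successive Hilbert coefficient of $Q$ is ``controlled by a constant times the square of the previous one plus a bounded correction'' is precisely the hard quantitative content of Hoa's theorem, and no argument is given for it; without it, neither the exponent $a2^{a-1}$ nor the constants $\tfrac{3}{2}$ and $+d$ can be recovered. The appeal to the classical boundedness of the family of subschemes cut out by forms of degree $\le d$ gives finiteness of the set of possible Hilbert polynomials but no explicit bound, and the identification of this step with ``bounding the regularity of $S/I$'' is only a one-way implication: a regularity bound for $S/I$ would indeed bound the coefficients, but such a bound is not available at that point in your argument and would itself require a separate doubly exponential estimate. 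Your closing paragraph correctly names the likely mechanism (generic linear sections, comparison of $I$ with its image in $S/(\ell)$, saturation corrections accumulating over $a$ steps) but explicitly defers carrying it out. As it stands, the proposal is a plausible roadmap toward Hoa's theorem, not a proof of it; for the purposes of this paper the honest course is to cite the result, as the paper does.
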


Once a Hilbert scheme is explicitly constructed, we can bound the number of the irreducible components by the lemma below.

\begin{lem}\label{cor:conn}
  If $X \hookrightarrow \mathbb{A}^r$ is an affine scheme defined by polynomials of degree $\leq d$, then
  \[\# \irr(X) \leq d^r. \]
\end{lem}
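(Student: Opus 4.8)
The claim is that an affine scheme $X \hookrightarrow \mathbb{A}^r$ cut out by polynomials of degree $\leq d$ has at most $d^r$ irreducible components. The standard tool for bounding the number of irreducible components (equivalently, the number of minimal primes) of such a scheme is Bézout's theorem, applied after intersecting with generic linear subspaces. Concretely, let $Z_1, \dots, Z_N$ be the irreducible components of $X$ of maximal dimension, say dimension $e \leq r$. The plan is to pick a generic affine-linear subspace $L \subset \mathbb{A}^r$ of codimension $e$; then $L$ meets each $Z_i$ in a finite nonempty set, so $\# \{Z_i\} \leq \# (X \cap L)$. Now $X \cap L$ is a zero-dimensional scheme in $\mathbb{A}^e \cong L$ defined by polynomials of degree $\leq d$ (the degree doesn't increase when we substitute linear forms), and by the affine Bézout bound a zero-dimensional variety in $\mathbb{A}^e$ cut out by degree-$\leq d$ equations has at most $d^e \leq d^r$ points. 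To handle \emph{all} components, not just the top-dimensional ones, one iterates: after slicing by a generic hyperplane, the lower-dimensional components of $X$ either disappear or drop in dimension while the generic hyperplane cuts the top stratum properly; a cleaner route is to slice $X$ by a \emph{generic} affine subspace of each codimension $0, 1, \dots, r$ and observe every irreducible component of $X$ meets a generic subspace of complementary codimension in at least one point, these point-sets being disjoint across components of different dimension — so the total count is bounded by $\max_e d^e = d^r$ (assuming $d \geq 1$; the case $d \le 1$ is linear algebra).

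The key steps, in order, are: (i) reduce to bounding the number of minimal primes, i.e. the number of irreducible components; (ii) stratify the components by dimension and, for each dimension $e$, intersect with a generic codimension-$e$ affine subspace $L_e$, noting this makes the intersection finite and nonempty on each such component, hence the number of $e$-dimensional components is at most $\#(X \cap L_e)$; (iii) bound $\#(X \cap L_e)$ by affine Bézout: $X \cap L_e$ sits in $\mathbb{A}^e$ and is defined by polynomials of degree $\leq d$ in $e$ variables, so it has at most $d^e$ points; (iv) sum (or rather, take the max) over $e = 0, \dots, r$ to get $d^r$. For step (iii) one can either cite the affine form of Bézout directly, or give the quick argument: a finite set in $\mathbb{A}^e$ defined by $f_1, \dots, f_s$ of degree $\leq d$ — after a generic change of coordinates so the projection to the last coordinate is finite on each component — is bounded by the degree of the resultant/elimination ideal, which is $\leq d^e$.

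I expect the main subtlety to be step (ii)/(iv): making the "generic subspace meets each component properly and nonemptily" argument precise, and ensuring that components of different dimensions contribute to disjoint point-counts (or simply that one does not double-count), so that the bound is $d^r$ rather than something like $\sum_{e=0}^r d^e$. The clean fix is to handle each dimension $e$ separately: the number of $e$-dimensional irreducible components is at most $d^e \le d^r$, and then — since components of distinct dimensions are genuinely different components but we want a single bound, not a sum — one argues instead that \emph{all} components (of every dimension) are hit by one sufficiently generic affine subspace of codimension equal to $\dim X$ only if $X$ is equidimensional; in general, the honest statement is obtained by induction on $r$: slice by a generic hyperplane $H$, note that $\operatorname{irr}(X \cap H)$ contains a component coming from each $e$-dimensional component of $X$ with $e \geq 1$ (these being the proper intersections, of dimension $e-1$), plus the $0$-dimensional components of $X$ (finitely many points, at most... — here one must be careful) survive; applying the inductive bound $\#\operatorname{irr}(X \cap H) \le d^{r-1}$ to the positive-dimensional part and bounding the finitely many isolated points directly. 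Balancing this induction so the final constant is exactly $d^r$ is the only real work; everything else is Bézout.
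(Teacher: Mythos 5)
There is a genuine gap. The paper does not prove this from scratch: it simply cites the Andreotti--B\'ezout inequality \cite[Lemma 1.28]{Cat}, which states that for $X$ cut out by polynomials of degree $\leq d$ the \emph{sum of the degrees of all irreducible components} (of all dimensions at once) is at most $d^r$; since each component has degree $\geq 1$, the count of components follows immediately. Your plan tries to reprove this by stratifying the components by dimension and slicing each stratum with a generic linear space, and this is where it breaks down in two places. First, your step (iv) is not valid as stated: the $e$-dimensional components for different $e$ are genuinely different components, so they must all be counted, and your per-dimension bounds add up to $\sum_{e=0}^{r} d^{e}$, which exceeds $d^{r}$ for every $d \geq 2$; taking the maximum over $e$ is not an available move. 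Second, even the per-dimension bound in step (ii)--(iii) is shaky: if $X$ has components of dimension greater than $e$, then $X \cap L_e$ is not finite, so the zero-dimensional affine B\'ezout count does not apply to it directly --- you would need to bound the number of \emph{isolated} points of $X \cap L_e$, and that is exactly the content of the refined B\'ezout theorem you are trying to avoid invoking.

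You do flag the problem yourself and propose an induction on $r$ to fix it, but you do not carry it out, and you concede that balancing the induction to land on exactly $d^{r}$ is ``the only real work.'' That remaining work is the entire substance of the lemma; the naive induction (slice by a generic hyperplane, bound the positive-dimensional components by $d^{r-1}$, add the isolated points) again produces a sum like $d^{r-1} + (\text{number of isolated points})$ with no obvious way to fold the second term into $d^{r}$ without a degree-weighted count. The statement that actually inducts cleanly is the degree inequality $\sum_{Z \in \irr(X)} \deg Z \leq d^{\,r}$, not the raw component count, and that is the form in which Andreotti--B\'ezout is proved and should be cited (or proved, if you want a self-contained argument).
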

\begin{proof}
  This is a special case of the Andreotti-Bézout inequality \cite[Lemma 1.28]{Cat}.
\end{proof}

% \begin{proof}
%   Let $\overline{X_\red}$ be the projective closure of $X_\red$. Then by \Cref{lem:irreducible component weighted},
%   \[\# \irr(X) = \# \irr(\overline{X_\red}) \leq \rho_d(\overline{X}) \leq d^r. \qedhere \]
% \end{proof}

The Grassmannian $\Gr(k,n)$ is covered by open sets isomorphic to $\mathbb{A}^{k(n-k)}$. The conditions in \Cref{thm:explicit construction} can be translated into explicit equations in such an affine space, giving the lemma below.

\begin{lem}\label{lem:counting components}
  Let $V$ and $W$ be vector spaces. Let $n = \dim V$. Let $\varphi_i: V \rightarrow W$ be a linear map for each $i = 0,\cdots,r$. Let $U \subset V$ be a subspace. Let $\mathbf{X}$ be the collection of $T \in \Gr(q,V)$ satisfying
  \begin{enumerate}[label=(\alph*)]
  \item $\dim (\varphi_0(T)+\cdots+\varphi_r(T)) \leq p$, and
  \item $U \subset T$.
  \end{enumerate}
  Then $\mathbf{X}$ is a closed subscheme of $\Gr(q,V)$, and with $n = \dim V$,
  \[ \#\conn(\mathbf{X}) \leq \max\{p+1,q+1\}^{q(n-q)}.\]
\end{lem}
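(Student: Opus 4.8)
The plan is to reduce the statement to a bound on the number of \emph{irreducible} components of $\mathbf{X}$ and then to apply the Andreotti--Bézout inequality of \Cref{cor:conn} inside a single, carefully chosen affine chart of the Grassmannian. First I would check that $\mathbf{X}$ is closed in $\Gr(q,V)$: condition (b) is the incidence locus $\{U\subseteq T\}$, the zero locus of the composite $U\otimes\mathcal{O}\to V\otimes\mathcal{O}\to\mathcal{Q}$ on the tautological quotient bundle $\mathcal{Q}$; condition (a) is the locus where the map $\mathcal{S}^{\oplus(r+1)}\to W\otimes\mathcal{O}$ assembled from the $\varphi_i$ on the tautological sub-bundle $\mathcal{S}$ has rank $\le p$, a determinantal condition. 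Both are closed, so $\mathbf{X}$ is a closed subscheme of $\Gr(q,V)$; in particular it is of finite type over $k$, hence has finitely many irreducible components $Z_1,\dots,Z_k$, and $\#\conn(\mathbf{X})\le\#\irr(\mathbf{X})=k$.

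The crucial observation is that one affine chart can be made to meet every irreducible component of $\mathbf{X}$; this is what lets us avoid summing per-chart bounds over the $\binom{n}{q}$ standard charts of $\Gr(q,V)$, which would inflate the final estimate by a factor of $\binom{n}{q}$ and ruin it. Choose a closed point $T_i\in Z_i$ for each $i$; each $T_i$ is a $q$-dimensional subspace of $V$. For a fixed $q$-dimensional subspace, the complementary $(n-q)$-dimensional subspaces form a dense open subset of $\Gr(n-q,V)$, so, avoiding only these $k$ conditions, I can find an $(n-q)$-dimensional $V_1\subset V$ with $V_1\cap T_i=0$ for all $i$. The affine open $U_{V_1}=\{T\in\Gr(q,V):T\cap V_1=0\}\cong\mathbb{A}^{q(n-q)}$ then contains every $T_i$, so it meets every $Z_i$, and hence $\#\irr(\mathbf{X})=\#\irr(\mathbf{X}\cap U_{V_1})$.

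It then remains to bound $\#\irr(\mathbf{X}\cap U_{V_1})$. Fix a complement $V_0$ of $V_1$ in $V$ together with bases of $V_0$ and $V_1$; then a subspace $T\in U_{V_1}$ is the row span of a unique matrix $[\,I_q\mid A\,]$ with $A\in\mathbb{A}^{q(n-q)}$, the entries of $A$ are the coordinates on the chart, and the rows $v_1,\dots,v_q$ of this matrix are vectors in $V$ whose coordinates are affine-linear in $A$. Condition (b) then says that the matrix obtained by stacking a fixed basis of $U$ on top of $[\,I_q\mid A\,]$ has rank $\le q$, so it is cut out by the $(q+1)\times(q+1)$ minors of that matrix, polynomials of degree $\le q+1$ in $A$; condition (a) says that the matrix whose columns are the $\varphi_l(v_i)$ ($0\le l\le r$, $1\le i\le q$), written in a fixed basis of $W$, has rank $\le p$, so it is cut out by its $(p+1)\times(p+1)$ minors, polynomials of degree $\le p+1$ in $A$. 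Hence $\mathbf{X}\cap U_{V_1}$ is defined set-theoretically in $\mathbb{A}^{q(n-q)}$ by polynomials of degree $\le\max\{p+1,q+1\}$, and, since $\#\irr$ depends only on the underlying set, \Cref{cor:conn} gives $\#\irr(\mathbf{X}\cap U_{V_1})\le\max\{p+1,q+1\}^{q(n-q)}$. Combined with the first two paragraphs, this proves the bound.

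The step I expect to be the real obstacle is the second: arranging that a single chart sees all of the components, so that the count is not multiplied by the number of charts covering the Grassmannian. Once that is in place, the determinantal descriptions of (a) and (b), the degree bookkeeping, and the appeal to \Cref{cor:conn} are all routine.
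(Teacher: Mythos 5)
Your proposal is correct and follows essentially the same route as the paper: pick a point in each irreducible component, choose a complementary $(n-q)$-plane avoiding all of them so that a single affine chart $\mathbb{A}^{q(n-q)}$ meets every component, express conditions (a) and (b) as $(p+1)\times(p+1)$ and $(q+1)\times(q+1)$ minors of matrices with entries affine-linear in the chart coordinates, and apply the Andreotti--Bézout bound of \Cref{cor:conn}. The only differences are cosmetic (row span versus column span, and phrasing closedness via tautological bundles rather than via the minors directly), and your explicit remark that $\#\conn(\mathbf{X})\le\#\irr(\mathbf{X})$ is a point the paper leaves implicit.
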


\begin{proof}
  Given a subspace $S \subset V$ of dimension $n-q$, there is an open set
  \[ U_S = \{T\in\Gr(q,V) \,|\, U \cap T = \{0\} \}. \]
  Choose a basis of $V$ such that $S$ is spanned by the first $n-q$ entries. Then every $T \in U_S$ is uniquely represented as a column space of a block matrix
  \[ N_T = \begin{pmatrix} X_{n-q,q} \\ \hline I_q \end{pmatrix},\]
  where $X_{n-q,q}$ is an $(n-q)\times q$ matrix, and $I_q$ is the $q\times q$ identity matrix.
  The entries of $X_{n-q,q}$ can be regarded as indeterminates, giving an isomorphism $U_S \simeq \mathbb{A}^{(n-q)q}$. Let $\varphi_i^{\oplus q}(N_T)$ be the $(\dim W) \times q$ matrix obtained by applying $\varphi_i$ to every column of $N_T$. Then
  \[\dim \left( \varphi_0(T)+\cdots +\varphi_r(T)\right) \leq p\]
  if and only if every $(p+1)\times(p+1)$ minor of the block matrix
  \[\left(
      \varphi_0^{\oplus k}\begin{pmatrix} X_{n-q,q} \\ I_q \end{pmatrix} \rvline{\arraycolsep}
      \varphi_1^{\oplus k}\begin{pmatrix} X_{n-q,q} \\ I_q \end{pmatrix} \rvline{\arraycolsep}
      \cdots \rvline{\arraycolsep}
      \varphi_r^{\oplus k}\begin{pmatrix} X_{n-q,q} \\ I_q \end{pmatrix} \right)\]
  is zero.

  Let $M_U$ be a matrix whose columns form a basis of $U$. Then $U \subset T$ if and only if every $(q+1)\times(q+1)$ minor of
  
  \[\begin{pmatrix}
      \begin{matrix}X_{n-q,q} \\ I_q \end{matrix} & \rvline{-\arraycolsep} & M_U
  \end{pmatrix}\]
is zero. Thus, $\mathbf{X} \cap U_S$ is defined by the minors in $U_S$, meaning that $\mathbf{X}$ is a closed subscheme of $\Gr(q,V)$.

Now, take one point from each irreducible component of $\mathbf{X}$, and let them be $T_0,\cdots,T_{\ell-1}\subset V$. Then $(n-k)$-dimensional subspace $S \subset V$ can be chosen such that
\[ S \cap \left(\bigcup_{i=0}^\ell T_i\right) = \{0\},\]
implying that every irreducible component intersects with $U_S$. Since $\mathbf{X} \cap U_S$ is defined by polynomials of degree $\leq \max\{p+1,q+1\}$, \Cref{cor:conn} implies
\begin{equation*}
  \#\irr(\mathbf{X}) = \#\irr(\mathbf{X}\cap U_S) \leq \max\{p+1,q+1\}^{q(n-q)}. \qedhere
\end{equation*}
\end{proof}

\begin{lem}\label{lem:conn bound}
  Let $X\hookrightarrow\mathbb{P}^r$ be a projective variety defined by polynomials of degree $\leq d$. Let $P$ be the Hilbert polynomial of an ideal. If $t \geq \max\{\varphi(P),d,8r\}$ and $r \geq 2$, then
  \[ \# \irr\left(\Hilb_{\monom{t}{r} - P(t)} X\right) \leq t^{r t^{2r}}. \]
\end{lem}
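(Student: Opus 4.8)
The strategy is to feed the explicit construction of \Cref{thm:explicit construction} into the counting bound of \Cref{lem:counting components}, and then bound each of the resulting parameters crudely in terms of $t$ and $r$. Concretely, set $P$ to be the given Hilbert polynomial, $n = \dim_k k[x_0,\dots,x_r]_t = \monom{t}{r}$, $q = P(t)$, $p = P(t+1)$, take $V = k[x_0,\dots,x_r]_t$, $W = k[x_0,\dots,x_r]_{t+1}$, let $\varphi_i$ be multiplication by $x_i$, and let $U = \Gamma(\mathscr{I}_X(t))$. Since $t \geq \max\{\varphi(P),d\}$, \Cref{thm:explicit construction} identifies $\Hilb_{\monom{t}{r}-P(t)} X$ (via $\imath_t$) with the locus $\mathbf{X}$ of \Cref{lem:counting components}, so
\[
  \#\irr\left(\Hilb_{\monom{t}{r}-P(t)} X\right) = \#\irr(\mathbf{X}) \leq \max\{P(t+1)+1,\ P(t)+1\}^{P(t)\cdot(n-P(t))} \leq \left(P(t+1)+1\right)^{P(t)\cdot n}.
\]
It then remains to bound $P(t+1)+1$, $P(t)$, and $n = \monom{t}{r}$ above by powers of $t$ and to combine the exponents.

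For these estimates I would argue as follows. Since $P$ is the Hilbert polynomial of an ideal in $r+1$ variables, $\deg P \leq r$ and — being a Hilbert polynomial — $P(s) \leq \monom{s}{r} = \binom{s+r}{r}$ for all large $s$; one can make this uniform for $s \geq t \geq 8r$, so that $P(t) \leq \binom{t+r}{r} \leq t^r$ (using $t \geq 8r$ and $r \geq 2$ to absorb the binomial-coefficient overhead, e.g.\ $\binom{t+r}{r} \leq (t+r)^r \leq (2t)^r \leq t^{r+1} \leq t^{2r}$). The same bound gives $n = \binom{t+r}{r} \leq t^{2r}$ and $P(t+1) + 1 \leq \binom{t+1+r}{r} + 1 \leq t^{2r}$ for $t$ in the stated range. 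Plugging in,
\[
  \#\irr\left(\Hilb_{\monom{t}{r}-P(t)} X\right) \leq \left(t^{2r}\right)^{\,t^{r}\cdot t^{2r}} = t^{\,2r\, t^{3r}} .
\]
This is already of the shape $t^{r t^{O(r)}}$; to land exactly on the claimed $t^{r t^{2r}}$ I would be more economical in the exponent bookkeeping — for instance bounding the base $P(t+1)+1$ and the factor $n$ together more tightly, and using $q \cdot (n-q) \le n^2/4$ together with sharper versions $\binom{t+r}{r} \le t^{3r/2}$ (valid once $t \ge 8r$) — so that the product $(\text{base exponent})\cdot q \cdot (n-q)$ collapses to at most $r\,t^{2r}$. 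The hypotheses $t \geq 8r$ and $r \geq 2$ are exactly what is needed to make all these "binomial $\leq$ monomial" conversions go through with the slack required to hit the clean exponent $r t^{2r}$.

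The main obstacle is purely the arithmetic of the exponent: naively multiplying $\log_t(\text{base}) \approx 2r$ by $q(n-q) \approx t^{3r}$ gives $\approx r t^{3r}$, which overshoots $r t^{2r}$. So the real content is a careful choice of which crude bound to apply where — in particular noticing that $q = P(t)$ has degree $\le r-1$ when $X$ is positive-dimensional, or else handling the finite case separately, and using $q(n-q)$ rather than $q\cdot n$ — to squeeze the exponent down to $r t^{2r}$. None of the individual steps is deep; the lemma is a bookkeeping consolidation of \Cref{thm:explicit construction} and \Cref{lem:counting components} with the Gotzmann bound (\Cref{thm:Gotzmann bound}) kept in reserve for the later sections where $\varphi(P)$ must itself be bounded.
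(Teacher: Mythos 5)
Your reduction is exactly the paper's: apply \Cref{thm:explicit construction} to identify the Hilbert scheme with the locus $\mathbf{X}$ of \Cref{lem:counting components} and obtain
$\#\irr \leq \max\{P(t+1)+1,P(t)+1\}^{P(t)(\monom{t}{r}-P(t))}$. The problem is the step you yourself flag as ``the real content'': the exponent bookkeeping is not actually carried out, and the route you sketch for closing it does not work. Your crude bound $\binom{t+r}{r}\leq t^{2r}$ gives $t^{2r\,t^{3r}}$, which overshoots; but your proposed repair --- $q(n-q)\leq n^2/4$ together with $\binom{t+r}{r}\leq t^{3r/2}$ --- still yields an exponent of order $\tfrac{3r}{2}\cdot\tfrac{1}{4}t^{3r}$, which is far larger than $r\,t^{2r}$, not smaller. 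No amount of shaving constants off an estimate of the form $\binom{t+r}{r}\leq t^{cr}$ with $c>1$ can save this, because the exponent scales like $t^{2cr}$. The aside about $P(t)$ having degree $\leq r-1$ also doesn't help (and is incorrect: $P$ is the Hilbert polynomial of the \emph{ideal}, so it has degree $r$ and is close to $\monom{t}{r}$ itself).

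The missing ingredient is the genuinely sharp estimate $\monom{t+1}{r}=\binom{t+r+1}{r}< t^{r}$, i.e.\ exponent $r$ rather than $3r/2$ or $2r$. This is where the hypotheses $t\geq 8r$ and $r\geq 2$ are really used: writing
\begin{equation*}
\monom{t+1}{r}=\frac{(t+r+1)\cdots(t+2)}{r!}\leq\frac{(t+r+1)^r}{2^{r-1}}\leq\Bigl(\tfrac{10}{8}t\Bigr)^{r}\frac{1}{2^{r-1}}<t^{r},
\end{equation*}
one uses $r!\geq 2^{r-1}$ to cancel the loss from $t+r+1\leq\tfrac{10}{8}t$. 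With this, the base $\max\{P(t+1)+1,P(t)+1\}\leq\monom{t+1}{r}+1\leq t^{r}$ and the exponent $P(t)(\monom{t}{r}-P(t))\leq\monom{t}{r}^{2}\leq t^{2r}$, giving $(t^{r})^{t^{2r}}=t^{r t^{2r}}$ exactly. So the overall architecture of your proof is right, but as written it proves only a weaker bound of the shape $t^{O(r)t^{3r}}$, and the claimed collapse to $r\,t^{2r}$ is asserted rather than achieved.
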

\begin{proof}
  \Cref{thm:explicit construction} and \Cref{lem:counting components} implies that
  \begin{align*}
    \# \irr\left(\Hilb_{\monom{t}{r} - P(t)} X\right)
    &\leq \max\left\{ P(t+1)+1, P(t)+1 \right\}^{P(t)\left(\monom{t}{r}-P(t)\right)}\\
    &\leq \left( \monom{t+1}{r} + 1 \right)^{\monom{t}{r}^2}.
  \end{align*}
  Since $r\geq 2$ and $t \geq 8r$, 
  \begin{align*}
    \monom{t+1}{r} &= \frac{(t+r+1)\cdots(t+2)}{r!} \\
                   &\leq \frac{(t+r+1)^r}{2^{r-1}} \\
                   &\leq \left(\frac{10}{8}t\right)^r \frac{1}{2^{r-1}} \\
                   &< t^r.
  \end{align*}
  Therefore,
  \[ \#\irr\left(\Hilb_{\monom{t}{r}-P(t)} X\right) \leq \left(t^r\right)^{\left({t^r}\right)^2} = t^{rt^{2r}}.
  \qedhere\]
\end{proof}

Now, we are ready to give an upper bound:
\begin{thm}\label{thm:NS bound}
  Let $X \hookrightarrow \mathbb{P}^r$ be a smooth projective variety defined by homogeneous polynomials of degree $\leq d$. Then
  \begin{equation}
    \# (\NS X)_{\tor} \leq 2^{d^{2^{r+3 \log_2 r}}}.
  \end{equation}
\end{thm}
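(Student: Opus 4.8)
The plan is to assemble the ingredients developed so far into a single explicit estimate. By \Cref{cor:bound by hilb}, we have $\#(\NS X)_\tor \leq \#\conn(\Hilb_{\HP_{mH}}X)$ with $m = (d-1)\codim X \leq (d-1)r$, so it suffices to bound the number of connected (hence irreducible, up to a harmless inequality) components of this Hilbert scheme. The Hilbert polynomial $P$ we feed into \Cref{lem:conn bound} should be the one for which $\monom{t}{r} - P(t) = \HP_{mH}(t)$; concretely, writing $\HP_{mH}$ in terms of $\HP_{\mathcal{O}_X}$ and $\HP_{\mathcal{O}(-mH)}$, one takes $P = \HP_{\mathscr{I}_D}$ for a divisor $D \in |mH|$, i.e.\ $P(t) = \monom{t}{r} - \HP_{mH}(t)$. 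The key point is that $P$ is the Hilbert polynomial of an ideal, so Hoa's bound (\Cref{thm:Gotzmann bound}) applies to it.

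First I would pin down which $P$ to use and check it is the Hilbert polynomial of an ideal cut out by polynomials of controlled degree. A divisor in $|mH|$ is cut out on $X$ by the equations of $X$ (degree $\leq d$) together with one hypersurface of degree $m = (d-1)\codim X$; to apply Hoa we need the relevant bound on generator degrees, so I would take $\delta = \max\{d, m\} = m$ (assuming $d \geq 2$, $r \geq 2$, so $m \geq d$), and note the ambient dimension contributes $a \leq r$ (or more precisely $a = \dim X$, but bounding $a \leq r$ and the exponent $r+1-a \geq 1$ is what we want, being careful since larger $a$ enlarges the exponent $a2^{a-1}$). So I would invoke \Cref{thm:Gotzmann bound} with $d$ replaced by $\delta \leq (d-1)r \leq dr$ and $a \leq r$, yielding $\varphi(P) \leq \bigl(\tfrac32 \delta^{r} + \delta\bigr)^{r2^{r-1}} \leq (2\delta^r)^{r 2^{r-1}} \leq (2(dr)^r)^{r2^{r-1}}$, a quantity I would then massage into the form $d^{2^{r+O(\log r)}}$.

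Next I would choose $t = \max\{\varphi(P), d, 8r\}$; for $r$ and $d$ not too small this is just $\varphi(P)$, so $t \leq (2(dr)^r)^{r2^{r-1}}$. Then \Cref{lem:conn bound} gives
\[
  \#\irr\bigl(\Hilb_{\HP_{mH}}X\bigr) \leq t^{r t^{2r}} \leq 2^{t^{3r}}
\]
(using $r t^{2r} \log_2 t \leq t^{3r}$ for $t$ large), and it remains to bound $t^{3r}$. Taking logarithms twice: $\log_2 t \leq r2^{r-1}\log_2(2(dr)^r) = r2^{r-1}(1 + r\log_2 d + r\log_2 r)$, so $\log_2 t^{3r} = 3r\log_2 t \leq 3r^2 2^{r-1}(1 + r\log_2 d + r \log_2 r)$. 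The dominant term is $3r^3 2^{r-1}\log_2 d$, and folding in the lower-order terms (absorbing $r^3 2^{r}$-type factors and the $\log_2 r$ into the exponent via $r^3 2^r \leq 2^{r + 3\log_2 r + O(1)}$) yields $\log_2 t^{3r} \leq d^{2^{r+3\log_2 r}}$ for the stated range of $r,d$, whence the claimed $\#(\NS X)_\tor \leq 2^{d^{2^{r+3\log_2 r}}}$. Small cases ($r \leq 1$, or $d \leq 1$ where $X$ is a linear space, or $t < \max\{d,8r\}$) are degenerate and handled separately, either because $\NS X$ is trivial or tiny, or by a direct substitution.

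The main obstacle is bookkeeping the tower of exponents so that the final constant $2^{r+3\log_2 r}$ comes out exactly as stated rather than something slightly larger: one must be careful that the $r2^{r-1}$ exponent from Hoa, the $3r$ from $t^{rt^{2r}} \leq 2^{t^{3r}}$, and the polynomial-in-$r$ prefactors ($\tfrac32$, the $r$ inside $(dr)^r$, the $+1$'s) all get absorbed into the $3\log_2 r$ slack in the exponent without overflow, which forces the ``$r$ sufficiently large'' hypotheses and the separate treatment of small cases. A secondary subtlety is justifying that $P$ really is the Hilbert polynomial of an ideal generated in the claimed degrees — this is where one uses that a divisor in $|mH|$ is a genuine hypersurface section, so its ideal sheaf on $\mathbb{P}^r$ is generated by $I_X$ together with one degree-$m$ form — and that passing from $\#\conn$ to $\#\irr$ only loses an inequality in the favorable direction.
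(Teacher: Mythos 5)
Your overall strategy coincides with the paper's: apply \Cref{cor:bound by hilb} with $m=(d-1)\codim X$, recognize $P$ as the Hilbert polynomial of the ideal $I_X+(f)$ of a divisor in $|mH|$ (generated in degrees $\le\max\{d,m\}\le rd$, with $a=\dim X$), bound its Gotzmann number by Hoa's theorem, and feed the resulting $t$ into \Cref{lem:conn bound}. All of those ingredients are correctly identified. The gap is in the final bookkeeping, which you flag as the main obstacle but do not actually carry out, and which in fact fails with the intermediate estimates you propose. By taking the worst case in the base ($\delta^{r+1-a}\le\delta^{r}$) and in the exponent ($a2^{a-1}\le r2^{r-1}$) of Hoa's bound \emph{simultaneously}, you arrive at $t\le\bigl(2(dr)^{r}\bigr)^{r2^{r-1}}$. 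The quantity that must come out $\le 2^{r+3\log_2 r}=r^{3}2^{r}$ is essentially $\log_d\log_2\bigl(t^{rt^{2r}}\bigr)\approx 2r\log_d t$, and with your $t$ one only gets $2r\log_d t\le r^{2}2^{r}\bigl(1+r+r\log_2 r\bigr)$, which is of order $r^{3}2^{r}\log_2 r$ and exceeds $r^{3}2^{r}$ for every $r$. So your estimates yield a second exponent of roughly $r+4\log_2 r$ or $r+5\log_2 r$, not the stated $r+3\log_2 r$; the claimed absorption ``into the $3\log_2 r$ slack'' does not go through. To land on the stated constant one must exploit the tradeoff inside Hoa's bound --- when $a$ is large the exponent $a2^{a-1}$ is large but the base $(rd)^{r+1-a}$ is small --- e.g.\ by bounding $\varphi(P)\le(2rd)^{(r+1-a)a2^{a-1}}$ and controlling $(r+1-a)a2^{a-1}$ as a function of $a$, rather than maximizing base and exponent independently; this is what the paper's choice $t=(2rd)^{(r+1)2^{r-2}}$ is designed to do.

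Two smaller points. First, your concluding display asserts $\log_2 t^{3r}\le d^{2^{r+3\log_2 r}}$, which is one logarithm away from what is needed: since $\#(\NS X)_\tor\le 2^{t^{3r}}$, you need $t^{3r}\le d^{2^{r+3\log_2 r}}$ itself, i.e.\ $3r\log_2 t\le 2^{r+3\log_2 r}\log_2 d$; the statement as written would only give a doubly exponential bound in $d$. Second, for what it is worth, this step is genuinely delicate: the paper's own verification that its $t$ dominates Hoa's bound also handles the dependence on $a$ rather loosely, so this is precisely the place where a fully careful argument is required.
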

\begin{proof}
  If $X$ is a curve or a projective space, then $(\NS X)_\tor = 0$. Thus, we may assume that $r \geq 3$, $d \geq 2$ and $\codim X \geq 1$. Moreover, $X$ may be assumed to be not contained in any hyperplane.
  
  Let $I$ be the ideal defining $X$. Let $H$ be a hyperplane section of $X$ cut by $x_r = 0$. Let $m = (d-1)\codim X \geq 1$. Then $I+(x_r^m)$ is the ideal defining $mH$ where $m=(d-1)\codim X$. Let $t = (2rd)^{(r+1)2^{r-2}}$ and $a = \dim X$. Then $t \geq d$ and $t \geq 8r$. Moreover,
  \begin{align*}
    t &= \left( \frac{3}{2} (rd)^{r+1-a} + \frac{1}{2} (rd)^{r+1-a} \right)^{a 2^{r-2}} \\
      &\geq \left( \frac{3}{2} (rd)^{r+1-a} + rd \right)^{a 2^{a-1}} \\
      &\geq \varphi(\HP_{mH}) \text{ (by \Cref{thm:Gotzmann bound})}.
  \end{align*}
  Thus, \Cref{cor:bound by hilb} and \Cref{lem:conn bound} implies that 
  \[ \# (\NS X)_\tor \leq \#\conn(\Hilb_{\HP_{m H}} X) \leq \#\irr(\Hilb_{\HP_{m H}} X) \leq t^{rt^{2r}}. \]
  Notice that
  \begin{align*}
    \log_2 \log_d \log_2 \left( t^{rt^{2r}} \right)
    &= \log_2 \log_d \left( t^{2r} r \log_2t \right) \\
    &\leq \log_2 \log_d \left( t^{2r} 2^{r-1} (r+1)r^2 d \right) \\
    &\phantom{\leq} \text{ (since $\log_2 (2rd) \leq 2rd$)}\\
    &\leq \log_2 \left( 2r \log_dt + r + \log_2 (r+1)r^2 \right) \\
    &\phantom{\leq} \text{ (since $d \geq 2$)}\\
    &\leq \log_2 \left( 2^{r-1}r(r+1)(\log_2 r + 2) + r + \log_2 (r+1)r^2 \right) \\
    &\leq r + 3\log_2 r.
  \end{align*}
  As a result,
  \begin{equation*}
    \# (\NS X)_\tor \leq 2^{d^{2^{r+3 \log_2 r}}}. \qedhere
  \end{equation*}

  % $\mathcal{B}$ be the Gr\"obner basis of $I$ in order to the lexicographical monomial order. Then $\mathcal{B} \cup \left\{x_r^m\right\}$ is the Gr\"obner basis of $I+(x_r^m)$. Since $\codim X \leq r$, \Cref{thm:Grobner bound} gives an upper bound
  % \[\max \left\{2\left(\frac{d^2}{2}+d\right)^{2^{r}},(d-1)r\right\} \leq 2d^{2^{r+1}},\]
  % on the degree of polynomials in $\mathcal{B} \cup \left\{x_r^m\right\}$. \Cref{thm:monomial Gotzmann bound} implies that the Gotzmann number of $\HP_{mH}$ is bounded by
  % \[t = \left(2rd^{2^{r+1}}\right)^{2^r} = d^{2^{2r+1}+2^r \log_d(2r)} \leq d^{2^{2r+2}}. \]
  % Thus, \Cref{cor:bound by hilb} and \Cref{lem:conn bound} give
  % \begin{align*}
  %   \# (\NS X)_\tor
  %   &\leq t^{rt^{2r}} \\
  %   &= d^{r2^{2r+2}d^{2r2^{2r+2}}} \\
  %   &= 2^{\log_2 d \cdot d^{\log_d r + (2r+2)\log_d 2} d^{2r 2^{2r+2}}} \\
  %   &\leq 2^{d^{2r\cdot 2^{2r+2} + 2r + \log_2 r + 3}} \\
  %   &\leq 2^{d^{2^{3r+3}}}. \qedhere
  % \end{align*}
\end{proof}

% \section{Conclusion}
% {\color{blue}(How to reduce the gap? About smoothness assumption.)}
% This papper gave an upper bound on a N\'eron--Severi Group of smooth projective varieties. This bound is, however, triply exponential and too rough. One way to improve the bound is to give a better bound on the Gotzmann number.
% \begin{que}
%   Let $I \subset k[x_0,\cdots,x_r]$ be an ideal generated by degree $\leq d$ elements. What is the tight upper bound on the Gotzmann number of $\HP_I$?
% \end{que}
% Recall that the Gotzmann number of $P$ is the maximum Castelnuovo–Mumford regularity of ideals having $P$ as a Hilbert polynomial.(cite?) Mayr and Meyer \cite[Where?]{MaMe} show that doubly exponential behavior on the bound on Castelnuovo–Mumford regularity cannot be avoided.
% Better way 
% \begin{que}
%   How to count the numer of connected components of a subscheme of $\Gr(k,n)$?
% \end{que}
% \begin{que}
%   How to explicitly construct $\pi^{-1}(\Pic^c X) \subset \EffDiv$, the subscheme corresponde to divisors numerically equivalent to $mH$?
% \end{que}
% This, however, does not improve the doubly exponential behavior.
% Second, counting number of connectec components of Grasmannian can be improved
% More conditions for numerical equivalence.
% This paper assumed that the variety $X$ is smooth. However, a torsion part of a N\'eron--Severi Group is birational equivalence. Thus, if $X$ a variety has a resolution of singularities, then the torsion part of a N\'eron--Severi Group is well-defined. One may even defined $(\NS X)_\tor$ for every singular projective variety $X$.

\section{Irreducible Components of Chow Varieties}\label{sec:chow}
In this section, $X \hookrightarrow \mathbb{P}^r$ is a smooth projective variety defined by homogeneous polynomials of degree $\leq d$. The goal of this section is to give a better upper bound on the order of $(\NS X)_{\tor}$. J\'anos Koll\'ar pointed out that the bound may be also derived form an upper bound on the number of connected components of $\Chow_{\delta,n} X$. Our goal only requires a bound for $\EffDiv_n X$ instead of Chow varieties of arbitrary dimensions.

\begin{thm}\label{thm:bound by EffDiv}
  Let $n = (d-1) \codim X \cdot \deg X$. Then
  \[\# \conn\left(\EffDiv_n X \right) \geq \# (\NS X)_\tor.\]
\end{thm}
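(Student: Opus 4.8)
The plan is to deduce this directly from \Cref{thm:bound by Hilb} and the numerical work of \Cref{sec:numerical}, by recognizing the Hilbert scheme that appears there as an open and closed subscheme of $\EffDiv_n X$.

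First I would set $m = (d-1)\codim X$ and $F = mH$, exactly as in the proof of \Cref{cor:bound by hilb}. By \Cref{lem:numerical condition}(a), for every divisor $D$ numerically equivalent to $0$ the sheaf $\mathcal{O}_X(F+D)$ is globally generated, in particular it has a global section, so the hypothesis of \Cref{thm:bound by Hilb} holds. Writing $Q = \HP_{\mathcal{O}_X} - \HP_{\mathcal{O}(-F)} = \HP_{mH}$, \Cref{thm:bound by Hilb} gives
\[ \#\conn\!\left(\Hilb_Q X \cap \EffDiv X\right) \geq \#(\NS X)_\tor. \]

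Next I would match this up with $\EffDiv_n X$. The polynomial $Q = \HP_{mH}$ is a numerical polynomial of degree $\dim X - 1$ whose leading coefficient is $\deg(mH)/(\dim X-1)!$, and since $H$ is a hyperplane section, $\deg(mH) = m\,(H^{\dim X}) = m\deg X = (d-1)\codim X \cdot \deg X = n$. Hence every effective Cartier divisor on $X$ with Hilbert polynomial $Q$ has degree $n$, so $\Hilb_Q X \cap \EffDiv X \subseteq \EffDiv_n X$; and because the Hilbert polynomial is locally constant on $\Hilb X$, and hence on the open and closed subscheme $\EffDiv X$, the locus where it equals $Q$ — namely $\Hilb_Q X \cap \EffDiv X$ — is open and closed in $\EffDiv_n X$. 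An open and closed subscheme is a union of connected components, so $\#\conn(\EffDiv_n X) \geq \#\conn(\Hilb_Q X \cap \EffDiv X) \geq \#(\NS X)_\tor$. The degenerate cases $\codim X = 0$ (so $X = \mathbb{P}^r$, $n = 0$) and $\dim X \leq 1$ are harmless since there $(\NS X)_\tor = 0$.

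There is no serious obstacle; the only point requiring care is the bookkeeping that identifies the degree $n$ used to define $\EffDiv_n X$ with the degree read off from the leading term of $\HP_{mH}$, i.e. the identity $\deg(mH) = m\deg X$. Once that is settled, the inclusion of Hilbert schemes and the elementary fact about open-and-closed subschemes conclude. The real content sits in \Cref{sec:chow}, which will bound $\#\conn(\EffDiv_n X)$ via Koll\'ar's technique; the present statement is merely the bridge reducing \Cref{thm:NS better bound} to such a bound.
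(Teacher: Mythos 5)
Your proof is correct and follows essentially the same route as the paper: apply \Cref{lem:numerical condition}(a) and \Cref{thm:bound by Hilb} with $F=mH$, observe that every divisor with Hilbert polynomial $\HP_{mH}$ has degree $\deg(mH)=m\deg X=n$ so that $\Hilb_Q X\cap\EffDiv X$ sits as an open and closed subscheme of $\EffDiv_n X$, and compare connected-component counts. The extra bookkeeping you supply (the leading coefficient of $\HP_{mH}$ and the degenerate cases) is consistent with, and slightly more explicit than, the paper's argument.
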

\begin{proof}
  Let $H \subset X$ be a hyperplane section, $m = (d-1) \codim X$, and $Q = \HP_{mH}$. If $D$ is a closed subscheme with Hilbert polynomial $Q$, then $\deg D = \deg mH = n$. Hence,
  \[ \Hilb_Q X \cap \EffDiv X \subset \EffDiv_n X. \]
  Since $\Hilb_Q X$, $\EffDiv X$ and $\EffDiv_n X$ are open and closed in $\Hilb X$, \Cref{lem:numerical condition} and \Cref{thm:bound by Hilb} implies that
  \begin{align*}
    \#\conn(\EffDiv_n X)
    &\geq \#\conn(\Hilb_Q X \cap \EffDiv X) \\
    &\geq \# (\NS X)_\tor. \qedhere
  \end{align*}
\end{proof}

In \cite[Exercise I.3.28]{Jan}, Koll\'ar gives an explicit upper bound on $\# \irr (\Chow_{\delta,n} \mathbb{P}^r)$ and an outline of the proof. Moreover, \cite[Exercise I.3.28.13]{Jan} suggests an exercise to find an explicit upper bound on $\# \irr (\Chow_{\delta,n} X)$, and the proof can be found in \cite[Section 2]{Gue}. However, the proof works only if $\charac k = 0$, and it does not give a bound in a closed form.

Therefore, we will give another complete proof, but most of the proof up to \Cref{lem:IrrDiv bound} is just a modification of Koll\'ar's technique and \cite[Section 2]{Gue}. Moreover, we will only bound $\#\irr(\EffDiv_n X)$, because this restriction avoids the bad behavior of Chow varieties in positive characteristic, simplies the proof and slightly improves the result.

\begin{lem}\label{lem:generic sec}
  Let $D \subset X$ be a nonzero effective divisor on $X$ of degree $n$. Then there are $f$ and $g$ in $\Gamma(X,\mathscr{I}_D(n))$ such that $D$ is the largest effective divisor contained in $V_X(f,g)$.
\end{lem}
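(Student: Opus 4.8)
The plan is to produce the two sections $f,g$ by first picking one good section cutting out a divisor $E\supset D$ with $E$ not much bigger than $D$, then picking a second section general enough that the two "extra" divisors $E-D$ and $E'-D$ share no common component. First I would observe that $\mathscr{I}_D(n)=\mathscr{I}_D\otimes\mathcal{O}_X(n)$ is globally generated: indeed $D$ has degree $n=(d-1)\codim X\cdot\deg X$, so $n/\deg X=(d-1)\codim X=m$ and $\mathcal{O}_X(n)=\mathcal{O}_X(mH)$, hence twisting the ideal sheaf sequence $0\to\mathscr{I}_D\to\mathcal{O}_X\to\mathcal{O}_D\to0$ by $\mathcal{O}_X(mH)$ and invoking \Cref{lem:numerical condition}(a) applied to the numerically trivial divisor $D-mH$ wait — more carefully: $\mathcal{O}_X(-D+mH)$ is the ideal sheaf $\mathscr{I}_D$ of $D$ twisted by $mH$ only up to the identification $\mathcal{O}(-D)\cong\mathscr{I}_D$, and $-D+mH$ need not be numerically trivial, but $\mathcal{O}_X(-D)\otimes\mathcal{O}_X(mH)$ is globally generated as soon as $mH$ is "large enough relative to $D$". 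The cleanest route is: since $D$ has degree $n$, $\mathcal{O}_X(D)$ has a section (namely $1$ with divisor $D$), and $\mathscr{I}_D(n)\cong\mathcal{O}_X(mH-D)$; now $mH-D$ is numerically equivalent to $mH-D$, and because $D$ is effective of degree $n=(\deg mH)$, the divisor $mH-D$ is numerically trivial iff $D\equiv mH$ numerically — which is not automatic. So instead I will just use that $\mathscr{I}_D(n)$ is generated by global sections because $\mathcal{O}_X(mH)$ was chosen (via \Cref{lem:numerical condition}) precisely so that $\mathcal{O}_X(mH+D')$ is globally generated for every numerically trivial $D'$, and $D-mH$... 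Let me restate: the key input is that $\Gamma(X,\mathscr{I}_D(n))$ is nonzero and in fact large, which is exactly the content used in \Cref{thm:bound by EffDiv}; at minimum it contains a section $f$ whose zero divisor is some effective divisor $E$ with $E\ge D$ and $\deg E=n$ (from $\mathcal{O}_X(n)=\mathcal{O}_X(mH)$), so $\deg E=\deg D=n$, forcing $E=D$.

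Actually that last observation is the crux and simplifies everything: any nonzero $f\in\Gamma(X,\mathscr{I}_D(n))$ has divisor of zeros $Z(f)$ which is an effective divisor of degree $\deg\mathcal{O}_X(n)=n$ containing $D$; since $D$ already has degree $n$ and both are effective, $Z(f)=D$ exactly. Hence \emph{every} nonzero section of $\mathscr{I}_D(n)$ cuts out precisely $D$, and in particular $D$ is the largest effective divisor contained in $V_X(f)$ for any such $f$ — so one section already suffices to pin down $D$, and a fortiori for two sections $f,g$ (as long as one is nonzero), $D$ is the largest effective divisor in $V_X(f,g)$, because $V_X(f,g)\subset V_X(f)=D$ as divisors. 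The only thing left is to exhibit a nonzero section, i.e. to show $\Gamma(X,\mathscr{I}_D(n))\ne0$: this follows because $\mathcal{O}_X(n-D)=\mathcal{O}_X(mH-D)$ and $mH$ is globally generated of degree $n$... no — the honest reason is that $\mathscr{I}_D\cong\mathcal{O}_X(-D)$, so $\mathscr{I}_D(n)\cong\mathcal{O}_X(mH-D)$, and I need $h^0(\mathcal{O}_X(mH-D))>0$. Since $D$ has degree $n=\deg(mH)$, the class $mH-D$ is numerically trivial if and only if it is, but regardless, \Cref{lem:numerical condition}(a) with the numerically trivial divisor "$0$" gives $\mathcal{O}_X(mH)$ globally generated; and if $D$ is numerically equivalent to $mH$ (which it may not be) we'd be done. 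So the genuinely careful statement must instead be read as: \Cref{lem:numerical condition} was set up for divisors numerically equivalent to $0$, so I should take $F=mH$ and the numerically trivial $D':=D-(D-\text{something})$...

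Let me take the intended reading: here $D$ ranges over divisors with $\mathcal{O}_X(D)$ in a numerical class making $\mathscr{I}_D(n)$ globally generated, which is guaranteed because this lemma is only ever applied to divisors $D$ arising in \Cref{thm:bound by EffDiv}, where $\mathcal{O}_X(F+D)$ has a global section with $F=mH$ and such $D$ numerically trivial; more honestly, the standalone proof of \Cref{lem:generic sec} should go: the linear system $|mH-D|$ — equivalently sections of $\mathscr{I}_D(n)$ — is base-point-free on $X\setminus D$ once it is nonempty, by \Cref{lem:numerical condition}(a) applied to $mH-D$ when that is nef, and in general one reduces to the nef case. Granting $V:=\Gamma(X,\mathscr{I}_D(n))$ is nonzero and base-point-free away from $D$, I take $f\in V$ nonzero and let $E=Z(f)$; by the degree count $E=D$. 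Then take any second $g\in V$, say $g=f$; then $V_X(f,g)=V_X(f)$ has $D_{\red}$ as underlying set and $D$ as the maximal effective divisor it contains. Done.

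The main obstacle, and the part I expect to require care, is precisely establishing that $\Gamma(X,\mathscr{I}_D(n))\neq 0$ in a way that is legitimately self-contained rather than circular with \Cref{thm:bound by EffDiv} — i.e. pinning down why the chosen $n=(d-1)\codim X\cdot\deg X$ forces $h^0(\mathscr{I}_D(n))>0$ for the relevant divisors $D$. Once that nonvanishing (and base-point-freeness off $D$) is in hand, the degree-counting argument $\deg Z(f)=n=\deg D$ together with $Z(f)\supseteq D$ and effectivity instantly gives $Z(f)=D$, and the "largest effective divisor in $V_X(f,g)$" conclusion is then immediate for any $f,g\in\Gamma(X,\mathscr{I}_D(n))$ with $f\ne0$ (take $g$ general, or even $g=f$, noting $V_X(f,g)\subseteq V_X(f)$ so the maximal effective divisor inside it is $\le Z(f)=D$, while $D\subseteq V_X(f,g)$ gives $\ge D$). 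A minor subtlety worth checking is the behavior in positive characteristic — but since we only use cohomological vanishing of $\mathscr{I}_D(n)$ via global generation from \Cref{thm:Fujita}/\Cref{lem:numerical condition}, which hold over any algebraically closed field, nothing special is needed there.
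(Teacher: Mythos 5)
Your central simplification is based on a degree miscount, and it breaks the whole argument. A nonzero $f\in\Gamma(X,\mathscr{I}_D(n))\subset\Gamma(X,\mathcal{O}_X(n))$ has zero divisor $Z(f)\sim nH$, whose degree as a subscheme of $\mathbb{P}^r$ is $n\cdot\deg X$, not $n$: the twist $\mathcal{O}_X(n)$ means $\mathcal{O}_X(nH)$, and $\deg(nH)=nH\cdot H^{\dim X-1}=n\deg X$. So unless $\deg X=1$, the containment $Z(f)\supseteq D$ together with $\deg D=n$ does \emph{not} force $Z(f)=D$; generically $Z(f)$ is strictly larger, with extra components not supported on $D$. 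This is exactly why the lemma needs \emph{two} sections: the second one must be chosen to avoid the extra components of $V_X(f)$. Your fallback "take $g=f$" therefore fails, since the maximal effective divisor in $V_X(f,f)=V_X(f)$ is $Z(f)\neq D$ in general.

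The second issue you flag — producing a nonzero section of $\mathscr{I}_D(n)$ — is real but is not resolved by anything you propose; note the lemma must hold for \emph{every} effective divisor of degree $n$, not only those numerically equivalent to $mH$, so \Cref{lem:numerical condition} is not applicable and the global-generation route is a dead end. The paper's proof handles both points at once with a classical projection argument: for a generic linear projection $\rho_0\colon\mathbb{P}^r\dashrightarrow\mathbb{P}^{\dim X}$, the image $\rho_0(D)$ is a hypersurface defined by a form $f_0$ of degree $n$, and its pullback $f$ lies in $\Gamma(X,\mathscr{I}_D(n))$ — this gives nonvanishing for free. Writing $V_X(f)=D\cup E_0\cup\cdots\cup E_{t-1}$ with each $E_i$ not set-theoretically contained in $D$, one picks points $e_i\in E_i\setminus D$ and a second generic projection $\rho_1$ with $e_i\notin\rho_1^{-1}(\rho_1(D))$; the pullback $g$ of the equation of $\rho_1(D)$ then kills every $E_i$, so $D$ is the largest divisor in $V_X(f,g)$. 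You would need to supply something equivalent to this two-step genericity argument; the degree-counting shortcut cannot be repaired.
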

\begin{proof}
  Take any point $x \in X$ and a generic linear projection $\rho_0\colon\mathbb{P}^r \dashrightarrow \mathbb{P}^{\dim X}$. Then $\rho_0(D)$ is a hypersurface and $\rho_0|_X$ is \'etale at $x$. Let $f_0$ be a homogeneous polynomial of degree $n$ defining $\rho_0(D)$, and $f \in \Gamma(X,\mathscr{I}_D(n))$ be its pullback. Then
  \[ \var[X]{f} = D \cup E_0 \cup E_1 \cup \cdots \cup E_{t-1} \]
  for some irreducible closed subschemes $E_i \subset X$ not set theoretically contained in $D$.

  Take $e_i \in E_i \setminus D$ for each $i$, and let $\rho_1\colon\mathbb{P}^r \dashrightarrow \mathbb{P}^{\dim X}$ be another generic linear projection. Then $e_i \not\in \rho_1(D)$ for every $i$. Let $g_0$ be a homogeneous polynomial of degree $n$ defining $\rho_1(D)$, and $g \in \Gamma(X,\mathscr{I}_D(n))$ be its pullback. Then $\var[X]{f,g}$ contains $D$ but not $E_i$ for all $i$. Thus, $D$ is the largest divisor contained in $\var[X]{f,g}$.
\end{proof}

\begin{defi}\label{def:S and T}
  Let
  \[ N_n = {n + r \choose n} -1. \]
  Then $\mathbb{P}^{N_n}$ parameterizes nonzero homogeneous polynomials of degree $n$ with $r+1$ variables up to constant factors. Let
  \begin{align*}
    S_n &= \left\{ (f,g) \in \left(\mathbb{P}^{N_n}\right)^2 \ \middle|\ 
        \codim_X (\var[X]{f,g}) \leq 1 \right\} \text{ and}\\
    T_n &= \left\{ ((f,g),[D]) \in S_n \times \EffDiv X \mid
        D \subset \var[X]{f,g} \right\}.
  \end{align*}
  Let $p\colon T_n \rightarrow S_n$ and $q \colon T_n \rightarrow \EffDiv X$ be the natural projections.
\end{defi}

\begin{lem}\label{lem:irreducible component biprojective}
  Let $X \hookrightarrow {\left(\mathbb{P}^r\right)}^2$ be a closed subscheme defined by bihomogeneous polynomials of total degree $\leq d$. Then
  \[ \#\irr(X) \leq d^{2r}. \]
\end{lem}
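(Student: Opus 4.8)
The plan is to reduce to \Cref{cor:conn} by covering $(\mathbb{P}^r)^2$ with affine charts isomorphic to $\mathbb{A}^{2r}$ and arranging that a single well-chosen chart already meets every irreducible component of $X$, exactly as in the proof of \Cref{lem:counting components} one chooses a subspace avoiding finitely many points. Since $X$ is Noetherian it has finitely many irreducible components; I would pick a closed point $p_k=(a_k,b_k)\in\mathbb{P}^r\times\mathbb{P}^r$ in each of them. A general hyperplane of $\mathbb{P}^r$ over the (infinite) field $k$ misses any prescribed finite set of points, so choose a hyperplane $L_1\subset\mathbb{P}^r$ avoiding all the $a_k$ and a hyperplane $L_2\subset\mathbb{P}^r$ avoiding all the $b_k$. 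After a linear change of homogeneous coordinates on each factor — which preserves bihomogeneity and the bidegree of the defining polynomials, since it acts linearly on each set of variables separately — I may assume $L_1=\{x_0=0\}$ and $L_2=\{y_0=0\}$, so that $U:=\{x_0\neq 0\}\times\{y_0\neq 0\}\cong\mathbb{A}^r\times\mathbb{A}^r=\mathbb{A}^{2r}$ contains every $p_k$. Consequently every irreducible component of $X$ meets the open subscheme $X\cap U$, and restriction to the open set $U$ sets up a bijection between $\irr(X)$ and $\irr(X\cap U)$; in particular $\#\irr(X)=\#\irr(X\cap U)$.

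Next I would verify the degree bound on the dehomogenized equations. If $F$ is bihomogeneous of bidegree $(a,b)$ with $a+b\leq d$, then setting $x_0=y_0=1$ sends each monomial $x_0^{i_0}\cdots x_r^{i_r}y_0^{j_0}\cdots y_r^{j_r}$ with $i_0+\cdots+i_r=a$ and $j_0+\cdots+j_r=b$ to a monomial of total degree $(i_1+\cdots+i_r)+(j_1+\cdots+j_r)\leq a+b\leq d$ in the variables $x_1,\dots,x_r,y_1,\dots,y_r$. Hence $X\cap U$ is a closed subscheme of $\mathbb{A}^{2r}$ defined by polynomials of degree $\leq d$, and \Cref{cor:conn} applied with ambient dimension $2r$ gives $\#\irr(X\cap U)\leq d^{2r}$. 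Combining with the previous paragraph yields $\#\irr(X)\leq d^{2r}$, as claimed. (If $d=0$ the statement is vacuous, so one may assume $d\geq 1$.)

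The argument is essentially routine, being a straightforward biprojective analogue of \Cref{cor:conn}. The only point that requires any care is the reduction to a single affine chart: a naive union over the $(r+1)^2$ standard charts $\{x_i\neq 0\}\times\{y_j\neq 0\}$ would only give $\#\irr(X)\leq(r+1)^2 d^{2r}$, so the general-position choice of the two hyperplanes at infinity — guaranteeing that no component is lost upon passing to the chart — is what produces the clean bound $d^{2r}$.
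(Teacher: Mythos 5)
Your proof is correct and follows essentially the same route as the paper's: choose generic hyperplanes $L_0,L_1\subset\mathbb{P}^r$ so that no irreducible component of $X$ lies in $L_0\times\mathbb{P}^r$ or $\mathbb{P}^r\times L_1$, pass to the affine chart $(\mathbb{P}^r\setminus L_0)\times(\mathbb{P}^r\setminus L_1)\simeq\mathbb{A}^{2r}$ without losing components, and apply \Cref{cor:conn}. Your write-up merely spells out two points the paper leaves implicit (why generic hyperplanes miss a point of each component, and why dehomogenization preserves the degree bound), which is fine.
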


\begin{proof}
  Let $L_0$ and $L_1$ be generic hyperplanes of $\mathbb{P}^r$. Then $L_0\times\mathbb{P}^r$ and $\mathbb{P}^r\times L_1$ do not contain any irreducible component of $X$. Let
  \[ U = \left(\mathbb{P}^r \setminus L_0\right)\times\left(\mathbb{P}^r \setminus L_1\right) \simeq \mathbb{A}^{2r}.\]
  Then
  \[\#\irr(X) = \#\irr\left(X \cap U\right),\]
  and $X \cap U \hookrightarrow \mathbb{A}^{2r}$ is defined by polynomials of degree $\leq d$. Consequently, \Cref{cor:conn} proves the inequality.
\end{proof}

\begin{lem}\label{lem:bound Sn}
  Let $n$ be a positive integer. Then $S_n$ is closed in $\left(\mathbb{P}^{N_n}\right)^2$ and
  \[ \# \irr (S_n) \leq {2\max\{n,d\}+(r-1)d \choose r}^{2{n + r \choose r}-2}.  \]
\end{lem}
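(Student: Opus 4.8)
The plan is to realize $S_n$ as the image of an incidence variety over $\left(\mathbb{P}^{N_n}\right)^2$ parametrized by a choice of $r-1$ additional linear forms, so that $S_n$ becomes (up to projections that do not increase the number of irreducible components) a scheme cut out by explicit bihomogeneous equations whose total degree I can control, and then invoke \Cref{lem:irreducible component biprojective}. Concretely, the condition $\codim_X(\var[X]{f,g}) \leq 1$ means that $\var[X]{f,g}$ has some component of codimension $\leq 1$ in $X$; intersecting with $\codim_X X - 1 = \dim X - 1$ generic hyperplanes reduces this to the statement that a certain zero-dimensional intersection scheme is nonempty, equivalently that a system of equations in the hyperplane-coordinates has a common solution on $X$. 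First I would set up the incidence scheme
\[
  \widetilde{S}_n = \left\{ \bigl((f,g),(\ell_1,\dots,\ell_{\dim X - 1}),x\bigr) \ \middle|\ x \in X,\ f(x)=g(x)=\ell_1(x)=\cdots=\ell_{\dim X-1}(x)=0 \right\}
\]
inside $\left(\mathbb{P}^{N_n}\right)^2 \times \left(\mathbb{P}^r\right)^{\dim X-1} \times X$, and argue that the projection of $\widetilde{S}_n$ to $\left(\mathbb{P}^{N_n}\right)^2$ has closed image equal to $S_n$ (using properness of $X$ and of the projective spaces of forms, plus the fact that a generic choice of the $\ell_i$ detects every codimension-$\leq 1$ component).

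The next step is to bound $\#\irr(S_n)$ by $\#\irr(\widetilde{S}_n)$, since the image of an irreducible set is irreducible and the projection is surjective onto $S_n$. Then I would apply \Cref{lem:irreducible component biprojective} — or rather its evident multi-projective generalization, obtained by the same generic-hyperplane-slicing argument as in its proof — to $\widetilde{S}_n$. The defining equations of $\widetilde{S}_n$ are: the equations of $X$ in the $x$-variables (degree $\leq d$), the bilinear pairings $f(x)=0$ and $g(x)=0$ (degree $1$ in the $f$- resp. $g$-coordinates and degree $n$ in $x$), and the pairings $\ell_i(x)=0$ (degree $1$ in $\ell_i$ and degree $1$ in $x$). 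After deleting generic coordinate hyperplanes to pass to an affine chart on each projective factor — so we may set one coordinate of $f$, of $g$, of each $\ell_i$, and one $x_j$ equal to $1$ — the equations become polynomials in $\mathbb{A}^{N_n + N_n + r(\dim X - 1) + r}$ of total degree at most $\max\{n,d\}$ (the $x$-degree dominates, and the new coordinate substitutions only raise the relevant degrees up to $\max\{n,d\}$ on each factor, with the linear forms contributing degree $1$ each). A careful bookkeeping of how the dehomogenizations interact — each of the $2\binom{n+r}{r}$ coefficient-coordinates can carry degree at most $\max\{n,d\}$ through the substitution, the $r-1$ linear slices contribute, and the $(r-1)$ forms $\ell_i$ of degree $d$ absorbed via $\ell^{d-\deg}$-type homogenization as in \Cref{lem:section of det} — is what produces the exponent $2\binom{n+r}{r}-2$ and the base $2\max\{n,d\}+(r-1)d$.

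I expect the main obstacle to be precisely this degree-and-ambient-dimension bookkeeping: getting the base of the bound to come out as exactly $2\max\{n,d\} + (r-1)d$ rather than something larger requires the sharpest form of \Cref{cor:conn}, meaning one must slice by exactly the right number of generic hyperplanes (so that $\widetilde{S}_n$ after slicing is cut out in an affine space of dimension equal to $2\binom{n+r}{r}-2$ plus lower-order terms that get absorbed) and track the total degree of each dehomogenized equation without overcounting. The conceptual step — that $S_n$ is closed and is dominated by $\widetilde{S}_n$ — is routine given properness; the arithmetic of the exponents, and in particular justifying why the $r-1$ linear forms and the hyperplane slices combine to give exactly $(r-1)d$ in the base, is the delicate part. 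A secondary subtlety is ensuring the argument is characteristic-free: genericity of the hyperplanes and linear projections must be phrased so that it only uses that $k$ is infinite (which holds, as $k$ is algebraically closed), avoiding any generic-smoothness input.
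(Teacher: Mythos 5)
There is a genuine gap at the very first step: the image of your incidence variety $\widetilde{S}_n$ under projection to $\left(\mathbb{P}^{N_n}\right)^2$ is not $S_n$. Because the linear forms $\ell_1,\dots,\ell_{\dim X-1}$ are existentially quantified, a pair $(f,g)$ lies in that image as soon as $\var[X]{f,g}\neq\emptyset$: pick any point $x\in \var[X]{f,g}$ and choose $\dim X-1$ linear forms vanishing at $x$. Since $\var[X]{f,g}$ is cut out by two equations, it is nonempty whenever $\dim X\geq 2$, so your projection is all of $\left(\mathbb{P}^{N_n}\right)^2$ rather than the codimension-$\leq 1$ locus. Detecting $\codim_X(\var[X]{f,g})\leq 1$ requires the slicing hyperplanes to be \emph{generic}, which is a universal, not existential, condition; the paper handles this by giving the hyperplanes indeterminate coefficients $\xi_{i,j}$ and base-changing to $k(\{\xi_{i,j}\})$, so that nonemptiness of the sliced scheme over that field is exactly the codimension condition.

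The second missing ingredient is how that nonemptiness becomes polynomial equations of controlled degree in the coefficients of $f$ and $g$. The paper invokes Koll\'ar's effective Nullstellensatz \cite[Corollary I.7.4.4.3]{Jan}: $\var[X]{f,g,h_0,\dots,h_{t-1}}=\emptyset$ if and only if the ideal contains $(x_0,\dots,x_r)^{2\max\{n,d\}+(r-1)d-r}$, which converts the condition into a rank bound on one graded piece, i.e.\ vanishing of minors of size ${2\max\{n,d\}+(r-1)d \choose r}$. This is why the base of the stated bound is that \emph{binomial coefficient}; your bookkeeping aims at producing a base of $2\max\{n,d\}+(r-1)d$ itself via degrees of dehomogenized equations, which is not what the lemma asserts and cannot arise from your setup. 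Finally, even with a corrected incidence variety, bounding $\#\irr(S_n)$ by $\#\irr(\widetilde{S}_n)$ and applying a multiprojective version of \Cref{lem:irreducible component biprojective} to the enlarged ambient space $\left(\mathbb{P}^{N_n}\right)^2\times\left(\mathbb{P}^r\right)^{\dim X-1}\times X$ would yield an exponent of roughly $2{n+r\choose r}-2+r(\dim X-1)+\dim X$, strictly worse than the claimed $2{n+r\choose r}-2$, which the paper obtains by writing the determinantal equations directly on $\left(\mathbb{P}^{N_n}\right)^2$ and applying \Cref{lem:irreducible component biprojective} there.
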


\begin{proof}
  Take $(f,g) \in (\mathbb{P}^{N_n})^2$ as in \Cref{def:S and T}. Then $\codim_X(\var[X]{f,g}) = 1$, if and only if the intersection of $\var[X]{f,g}$ with $t = \dim X - 1$ number of generic hyperplane sections is nonempty. Let $h_i = \sum_{j=0}^{t-1} \xi_{i,j} x_j$ be a generic hyperplane section for each $i$, where $\xi_{i,j}$ are indeterminates. Take a base extension to $k(\{\xi_{i,j}\}_{i,j})$. Then
  \begin{align*}
    \phantom{\Longleftrightarrow}&\ \codim_X\left(\var[X]{f,g}\right) = 1 \\
    \Longleftrightarrow&\ \var[X]{f,g,h_0\cdots,h_{t-1}} \neq \emptyset \\
    \Longleftrightarrow&\ (x_0,\cdots,x_r)^{2\max\{n,d\} + (r-1)d- r} \not\subset
                         (f,g,h_0\cdots,h_{t-1}) + I_X\\
    \phantom{\Longleftrightarrow}&\ \text{(by \cite[Corollary I.7.4.4.3]{Jan})}\\
    \Longleftrightarrow&\ \rank\left(\left(\left(f,g,h_0\cdots,h_{t-1}\right) + I_X\right)_{2\max\{n,d\} + (r-1)d- r}\right) < {2\max\{n,d\}+(r-1)d \choose r}.
  \end{align*}
  The last condition can be translated into bihomogeneous polynomials in the coefficients of $f$ and $g$ of total degree ${2\max\{n,d\}+(r-1)d \choose r}$. \Cref{lem:irreducible component biprojective} proves the inequality.  
  % Take any $(f,g) \in U$. Then \Cref{lem:codim bound} implies that $p(p^{-1}(f,g))$ has codimension $\leq 2(b+1) {m+k \choose k}$. As a result, \Cref{lem:irreducible component weighted} proves the claim.
\end{proof}

\begin{lem}
  The set $T_n$ is a closed subset of $S_n \times \EffDiv X$.
\end{lem}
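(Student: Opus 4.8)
The plan is to exhibit $T_n$ as the zero locus of a coherent condition that is manifestly closed, using the universal Cartier divisor over $\EffDiv X$. Recall that a point of $\EffDiv X$ is an effective Cartier divisor $D$ on $X$, cut out by an ideal sheaf $\mathcal{O}_X(-D) \subset \mathcal{O}_X$, and that $T_n$ should consist of those triples $((f,g),[D])$ with $(f,g) \in S_n$ and $D \subset \var[X]{f,g}$. The containment $D \subset \var[X]{f,g}$ means precisely that the images of $f$ and $g$ in $\Gamma(X, \mathcal{O}_X(n))$ lie in the subspace $\Gamma(X,\mathscr{I}_D(n)) = \Gamma(X, \mathcal{O}_X(-D)(n))$. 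So the strategy is: for each of $f$ and $g$ separately, show that the locus where "the section vanishes on $D$'' is closed, and then intersect.

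First I would set up the universal family. Over $\EffDiv X$ there is a universal divisor $\mathscr{D} \subset X \times \EffDiv X$, which is an effective Cartier divisor relative to $\EffDiv X$; let $\mathcal{O}(-\mathscr{D})$ be its ideal sheaf and let $\pi\colon X \times \EffDiv X \to \EffDiv X$ be the projection. Consider the short exact sequence $0 \to \mathcal{O}(-\mathscr{D})(n) \to \mathcal{O}_{X\times\EffDiv X}(n) \to \mathcal{O}_{\mathscr{D}}(n) \to 0$ on $X \times \EffDiv X$, and push forward by $\pi$. A homogeneous polynomial $f$ of degree $n$ determines, after pullback, a global section of $\mathcal{O}_{X \times \EffDiv X}(n)$, hence by restriction a section $\bar f$ of $\pi_* \mathcal{O}_{\mathscr{D}}(n)$; and for a fixed $[D] \in \EffDiv X$ the fiber condition $D \subset V_X(f)$ is exactly the vanishing of $\bar f$ at $[D]$. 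Varying $f$ over $\mathbb{P}^{N_n}$, the section $\bar f$ becomes a section of $\mathcal{O}_{\mathbb{P}^{N_n}}(1) \otimes \pi'_* \mathcal{O}_{\mathscr{D}}(n)$ over $\mathbb{P}^{N_n} \times \EffDiv X$ (here $\pi'$ is the appropriate projection), whose zero locus is closed since $\pi'_*\mathcal{O}_{\mathscr{D}}(n)$ is coherent. Doing the same for $g$ and intersecting the two zero loci inside $(\mathbb{P}^{N_n})^2 \times \EffDiv X$ gives a closed subset $Z$. Finally I would intersect $Z$ with $S_n \times \EffDiv X$, which is closed by \Cref{lem:bound Sn}, to obtain exactly $T_n$.

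The main point requiring care is the compatibility of the "vanishing on $D$'' condition with base change, i.e. that the fiber of $\pi'_*\mathcal{O}_{\mathscr{D}}(n)$ over a point $[D]$ really is $\Gamma(D,\mathcal{O}_D(n))$ and that the fiberwise vanishing of $\bar f$ cuts out a closed set rather than merely a constructible one; this is handled by noting that $\mathscr{D} \to \EffDiv X$ is finite (being a divisor, hence relatively $0$-dimensional after further intersecting, but more simply: $\mathcal{O}_{\mathscr{D}}(n)$ is flat over $\EffDiv X$ in each degree-$n$ graded piece, or one reduces to a locally free situation after twisting high enough), so $\pi'_* \mathcal{O}_{\mathscr{D}}(n)$ is locally free and its formation commutes with base change. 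Alternatively, and perhaps more cheaply, I would avoid the universal family by arguing valuatively: $T_n$ is defined by the closed conditions "$(f,g)\in S_n$'' (already closed) and the incidence condition $D \subset \var[X]{f,g}$, and the latter is closed because it is the preimage of the diagonal-type incidence locus under a morphism to a product of Hilbert schemes — specifically, $[D] \mapsto [D]$ and $((f,g)) \mapsto [\var[X]{f,g}]$ both land in $\Hilb X$, and $\{(D, W) : D \subset W\}$ is closed in $\Hilb X \times \Hilb X$. I expect the bookkeeping around which twist makes everything locally free, and checking $\var[X]{f,g}$ varies algebraically with $(f,g)$, to be the only real obstacle; both are routine but need to be stated.
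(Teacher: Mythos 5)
Your overall strategy --- splitting $D \subset \var[X]{f,g}$ into the two separate conditions $f|_D = 0$ and $g|_D = 0$ and realizing each as a vanishing condition over the universal divisor $\mathscr{D} \subset X \times \EffDiv X$ --- is a genuinely different route from the paper's, which instead restricts to each piece $\Hilb_Q X \cap \EffDiv X$, embeds $\Hilb_Q X$ into a Grassmannian via Gotzmann's theorem, and writes the containment as a rank condition on $\Gamma(\mathscr{I}_Z(t)) + (f,g)_t$ cut out by minors. But the step your closedness rests on is false as stated. The morphism $\mathscr{D} \to \EffDiv X$ is not finite (its fibers are divisors on $X$, of dimension $\dim X - 1$), and for this particular twist $n$ the sheaf $\pi'_*\mathcal{O}_{\mathscr{D}}(n)$ is in general neither locally free nor compatible with base change: that would require $h^0(D,\mathcal{O}_D(n))$ to be constant (or $H^1(D,\mathcal{O}_D(n))$ to vanish) as $D$ ranges over degree-$n$ divisors, and $n$ lies far below the regularity bounds that would guarantee this. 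Flatness of $\mathcal{O}_{\mathscr{D}}$ over the base does not transfer to local freeness of the pushforward. Without local freeness, the fiberwise zero locus of a section of a merely coherent sheaf need not be closed at all: the section $1$ of the skyscraper sheaf $\mathcal{O}_{\{0\}}$ on $\mathbb{A}^1$ vanishes exactly on the open set $\mathbb{A}^1\setminus\{0\}$. Your fallback ``incidence'' argument also fails at the point you call routine: $(f,g)\mapsto[\var[X]{f,g}]$ is not a morphism to $\Hilb X$, because $\var[X]{f,g}$ does not vary flatly with $(f,g)$ (its Hilbert polynomial jumps), so there is no map under which to pull back the incidence locus $\{(D,W): D \subset W\}$.

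The gap can be filled in two ways. Either invoke the correct general statement --- for a coherent sheaf that is flat and proper over a base, the condition that a fixed section vanish on the fibers is represented by a closed subscheme of the base (EGA III, 7.7.8--7.7.9, via the linear scheme representing $T \mapsto \Gamma(X_T,\mathscr{F}_T)$) --- applied to $\mathcal{O}_{\mathscr{D}}(n)$ twisted by the universal linear form on $\mathbb{P}^{N_n}$; this bypasses local freeness entirely and only a closed subset is needed here anyway. Or else twist up to a degree $t$ at least the Gotzmann number of $Q$ on each open and closed piece $\Hilb_Q X \cap \EffDiv X$, replacing ``$f|_D=0$'' by ``$(f)_t\subset\Gamma(\mathscr{I}_D(t))$'' (these are equivalent since a nonzero section of $\mathcal{O}_D(n)$ stays nonzero after multiplying by a suitable monomial); at that point the pushforward is the tautological subbundle on the Grassmannian and everything is minors. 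This second option is precisely the paper's proof, so once the bookkeeping you defer is carried out honestly, your argument either collapses onto the paper's or requires the representability theorem you did not cite.
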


\begin{proof}
  Since $\EffDiv X$ is open and closed in $\Hilb X$, it suffices to show that
  \[ T_n^Q = \left\{ ((f,g),[Z]) \in S_n \times \Hilb_Q X \mid
      Z \subset \var[X]{f,g} \right\} \]
  is closed in $S_n \times \Hilb_Q X$ for every Hilbert polynomial $Q$. Recall that \Cref{thm:explicit construction of Hilb P} gives a closed embedding
  \begin{align*}
    \imath_t\colon\Hilb_Q X &\rightarrow \Gr(P(t),k[x_0,\cdots,x_r]_t) \\
    [Z] &\mapsto \Gamma(\mathscr{I}_Z(t))
  \end{align*}
  for some polynomial $P$ and every large $t$. This gives a closed embedding
  \[ (S_n\times\Hilb_Q X) \hookrightarrow (\mathbb{P}^N\times\Gr(P(t),k[x_0,\cdots,x_r]_t)). \]
  We may assume that $t\geq n$. Notice that $Z \subset V(f,g)$ if and only if the saturation of $(f,g)$ is contained in the saturated ideal defining $Z$. Thus,
  \begin{align*}
    Z \subset V(f,g)
    &\Leftrightarrow (f,g)_t \subset \Gamma(\mathscr{I}_Z(t)) \\
    &\Leftrightarrow \dim\left( \Gamma(\mathscr{I}_Z(t)) + (f,g)_t\right) \leq P(t).
  \end{align*}
  Note that $\mathbb{P}^N\times\Gr(P(t),k[x_0,\cdots,x_r]_t)$ is covered by the standard affine open spaces. In such affine open spaces, the last condition is expressed as $(P(t)+1)\times (P(t)+1)$ minors of some matrix. Consequently, $T_n^Q$ is identified with a closed subset of $\mathbb{P}^N\times\Gr(P(t),k[x_0,\cdots,x_r]_t)$.
\end{proof}

\begin{defi}
  Let $\IrrDiv_n X$ be the union of the irreducible components of $\EffDiv_n X$ which contains at least one closed point corresponding to a reduced and irreducible divisor.
\end{defi}

\begin{lem}\label{lem:comp1}
  Let $F$ be an irreducible component of $\IrrDiv_n X$. Then there is a unique irreducible component $E$ of $T_n$ such that $q(E) = F$.
\end{lem}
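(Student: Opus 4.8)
The plan is to realize the component $E$ as the closure in $T_n$ of the fibre of $q$ over the generic point $\xi$ of $F$, once this fibre has been shown to be irreducible and nonempty. First I would make the same reductions as in the proof of \Cref{thm:NS bound}: assume $\dim X\ge 2$, $\codim X\ge 1$, and that $X$ is not contained in a hyperplane of $\mathbb{P}^r$, so that $n=(d-1)\codim X\cdot\deg X\ge 1$ and every divisor parametrized by $\EffDiv_n X$ is nonzero. Writing $D_\xi\subset X_{k(\xi)}$ for the divisor over $\xi$ (an effective Cartier divisor of degree $n$, hence nonzero), I would observe that a pair $(f,g)\in(\mathbb{P}^{N_n}_{k(\xi)})^2$ lies in $q^{-1}(\xi)$ precisely when $D_\xi\subseteq\var[X_{k(\xi)}]{f,g}$; since $\dim D_\xi=\dim X-1$, this already forces $\codim_{X_{k(\xi)}}\var[X_{k(\xi)}]{f,g}\le 1$, so the condition cutting out $S_n$ is automatic and the only constraint is that $f$ and $g$ vanish on $D_\xi$. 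Thus $q^{-1}(\xi)=\mathbb{P}(\widetilde W)\times\mathbb{P}(\widetilde W)$, where $\widetilde W=(I_{D_\xi})_n\subseteq k[x_0,\cdots,x_r]_n\otimes_k k(\xi)$ is the degree-$n$ part of the saturated ideal of $D_\xi$; since $I_X\subseteq I_{D_\xi}$ and $(I_X)_n\ne 0$ (indeed $n\ge d$ and $X\subsetneq\mathbb{P}^r$; alternatively this nonemptiness is immediate from \Cref{lem:generic sec} applied over $\overline{k(\xi)}$), the space $\widetilde W$ is nonzero, so $q^{-1}(\xi)$ is nonempty and irreducible.

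For existence I would take a point $z\in q^{-1}(\xi)$ and an irreducible component $E$ of $T_n$ containing $z$. Since $\EffDiv X=\bigsqcup_m\EffDiv_m X$ and $q(E)$ is connected and meets $\EffDiv_n X$, necessarily $q(E)\subseteq\EffDiv_n X$, whence $E\subseteq S_n\times\EffDiv_n X$. The latter is projective: $S_n$ is closed in $(\mathbb{P}^{N_n})^2$ by \Cref{lem:bound Sn}, and $\EffDiv_n X$ is a finite union of open-and-closed subschemes of projective Hilbert schemes $\Hilb_Q X$ (only finitely many $Q$ occur). Hence $q|_E$ is proper, so $q(E)$ is a closed, irreducible subset of $\EffDiv_n X$ containing $\xi$; since $F$ is an irreducible component of $\EffDiv_n X$, this forces $q(E)=F$.

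For uniqueness I would let $E$ be an arbitrary irreducible component of $T_n$ with $q(E)=F$. Because $q|_E\colon E\to F$ is dominant, the generic point $\eta_E$ of $E$ maps to the generic point $\xi$ of $F$, so $\eta_E\in q^{-1}(\xi)$. Let $\theta$ be the generic point of the irreducible space $q^{-1}(\xi)$; then $\eta_E\in\overline{\{\theta\}}$, so $E=\overline{\{\eta_E\}}\subseteq\overline{\{\theta\}}$. As $\overline{\{\theta\}}$ is irreducible it lies in some irreducible component $E_0$ of $T_n$, giving $E\subseteq E_0$; since $E$ and $E_0$ are both irreducible components of $T_n$, we conclude $E=E_0=\overline{\{\theta\}}$. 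Thus every irreducible component of $T_n$ mapping onto $F$ equals $\overline{\{\theta\}}$, which depends only on $F$; combined with the previous paragraph this yields existence and uniqueness.

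The bookkeeping — connectedness forcing $q(E)\subseteq\EffDiv_n X$, the open-and-closed statements, the projectivity assertions — is routine. The two points genuinely requiring care are the explicit identification of $q^{-1}(\xi)$ with a product of projective spaces, whose crux is that membership in $S_n$ is automatic once the nonzero divisor $D_\xi$ lies inside $\var[X]{f,g}$, and the upgrade from ``$q(E)$ is dense in $F$'' to ``$q(E)=F$'', which is exactly where properness of $q$ over $\EffDiv_n X$ is used; I expect verifying projectivity of $\EffDiv_n X$ (hence of $S_n\times\EffDiv_n X$) to be the fussiest ingredient, though it is standard. I would also remark that the hypothesis that $F$ lie in $\IrrDiv_n X$ — equivalently that $D_\xi$ be geometrically integral — is not needed for this lemma, only for the ones that follow.
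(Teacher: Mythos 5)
Your proof is correct and takes essentially the same route as the paper's: existence follows from properness of $q$ together with \Cref{lem:generic sec} (which guarantees the fibres over $F$ are nonempty), and uniqueness from the fact that each fibre of $q$ over $F$ is a product of two projective spaces, hence irreducible. The paper phrases the fibre computation at closed points $[D]\in F$ and leaves the generic-point bookkeeping implicit, which you have simply written out in full.
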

\begin{proof}
  \Cref{lem:generic sec} implies that $\EffDiv_n X$ is contained in the image of $q:T_n \rightarrow \EffDiv X$. Because $q$ is proper, there is an irreducible component $E$ of $T_n$ such that $q(E) = F$. Moreover, for any $[D] \in F$,
  \[ q^{-1}([D]) \simeq \left\{ (f,g)\in\left(\mathbb{P}^{N_n}\right)^2 \ \middle|\
      f, g \in \Gamma(\mathscr{I}_D(n)) \right\}. \]
  Then $q^{-1}([D])$ is irreducible, because it is a product of two projective spaces. Thus, such an $E$ is unique.
\end{proof}

\begin{lem}\label{lem:comp2}
  Let $F$ and $E$ be as in \Cref{lem:comp1}. Then there is $(f,g)\in S_n$ such that $E$ is the only irreducible component of $T_n$ satisfying $(f,g) \in p(E)$.
\end{lem}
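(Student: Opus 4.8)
The plan is to exploit the genericity built into Lemma~\ref{lem:generic sec}. Fix a closed point $[D_0] \in F$ corresponding to a reduced and irreducible divisor $D_0$ on $X$; such a point exists because $F \subset \IrrDiv_n X$. By the proof of Lemma~\ref{lem:generic sec} one can choose $(f,g)\in\Gamma(\mathscr{I}_{D_0}(n))^2$ so that $D_0$ is the \emph{largest} effective divisor contained in $V_X(f,g)$, and, since $D_0$ is irreducible, $(V_X(f,g))_\red = D_0$ set-theoretically. In particular $\codim_X V_X(f,g) = 1$, so $(f,g)\in S_n$, and $((f,g),[D_0])\in E$ by Lemma~\ref{lem:comp1} (since $E$ is the unique component of $T_n$ dominating $F$ and $q^{-1}([D_0])$ is irreducible, it lies in $E$). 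Thus $(f,g)\in p(E)$.

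Next I would argue that no other irreducible component $E'$ of $T_n$ satisfies $(f,g)\in p(E')$. Suppose $((f,g),[D'])\in T_n$ for some effective divisor $D'\subset X$. Then $D'\subset V_X(f,g)$ as a subscheme, hence $D' \subset D_0$ as divisors because $D_0$ is the largest effective divisor in $V_X(f,g)$; as $D'$ and $D_0$ both have degree $n$ this forces $D'=D_0$. Therefore $p^{-1}((f,g)) = \{((f,g),[D_0])\}$ is a single point, which can lie on only one irreducible component of $T_n$ — namely the one containing $((f,g),[D_0])$, which is $E$. Hence $(f,g)\notin p(E')$ for every component $E'\ne E$, as claimed.

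The main subtlety — really the only place care is needed — is the inference ``$D' \subset V_X(f,g)$ as a subscheme $\Rightarrow D' \le D_0$ as divisors.'' What Lemma~\ref{lem:generic sec} literally provides is that $D_0$ is the largest effective \emph{Cartier} divisor contained in $V_X(f,g)$; one must check this genuinely controls every subscheme-theoretic inclusion of a degree-$n$ effective divisor, not merely the reduced structure. Because $X$ is smooth, an effective Cartier divisor $D'$ is determined by its local equations, and $D'\subset V_X(f,g)$ means $f,g$ lie locally in the ideal of $D'$; so $D'$ divides the divisorial part of $V_X(f,g)$, which is exactly $D_0$ by the maximality statement. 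I would spell this local computation out in one or two sentences. Everything else — that $S_n$ and $T_n$ are the closed sets already constructed, that $q^{-1}([D_0])$ is an irreducible product of projective spaces and hence lies in a single component — has been established above, so the proof is short once this point is nailed down.
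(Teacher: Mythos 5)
There is a genuine gap at the very last step. After arranging that $p^{-1}((f,g))=\{((f,g),[D_0])\}$, you conclude that this single point ``can lie on only one irreducible component of $T_n$.'' That is false in general: a point of a scheme can lie on several irreducible components (think of the origin in $V(xy)\subset\mathbb{A}^2$). So from $(f,g)\in p(E')$ you correctly get $((f,g),[D_0])\in E'$, but you cannot then conclude $E'=E$; the point $((f,g),[D_0])$ could a priori sit in the intersection $E\cap E'$. A related problem occurs earlier: for an \emph{arbitrary} reduced irreducible $[D_0]\in F$, the irreducible fibre $q^{-1}([D_0])$ is contained in \emph{some} component of $T_n$, but nothing you have said forces that component to be $E$, so even the claim $(f,g)\in p(E)$ is not yet justified.

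The missing ingredient is a genericity condition on $[D_0]$ inside $F$, which is exactly how the paper proceeds: using the uniqueness statement of \Cref{lem:comp1} and the properness of $q$, the set $\bigcup_{E'\neq E} q(E')$ meets $F$ in a proper closed subset, so there is a dense open $U\subset F$ with $q^{-1}(U)$ disjoint from every component of $T_n$ other than $E$. Choosing $[D_0]$ in $U$ and also in the locus $W$ of reduced irreducible divisors (both are dense in $F$ since $F\subset\IrrDiv_n X$) repairs both issues at once: $q^{-1}([D_0])\subset E$, and the single point $p^{-1}((f,g))=\{((f,g),[D_0])\}$ cannot lie on any $E'\neq E$. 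The rest of your argument --- producing $(f,g)$ via \Cref{lem:generic sec} so that $D_0$ is the largest effective divisor in $\var[X]{f,g}$, and deducing that any $D'$ with $((f,g),[D'])\in T_n$ equals $D_0$ --- matches the paper and is fine (though your parenthetical claim that $(\var[X]{f,g})_\red=D_0$ need not hold, since $\var[X]{f,g}$ may have extra components of codimension $\geq 2$; only $\codim_X\var[X]{f,g}=1$ is needed to place $(f,g)$ in $S_n$).
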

\begin{proof}
  Let $W \subset \EffDiv_n$ be the complement of the image of the proper morphism
  \newlength{\mylength}\newlength{\mylengthA}\newlength{\mylengthB}
  \settowidth{\mylengthA}{$\times$}\settowidth{\mylengthB}{$,$}
  \setlength{\mylength}{0.5\mylengthA minus 0.5\mylengthB}
  \begin{alignat*}{3}
    \coprod_{t = 1}^{n-1}
    \EffDiv_{t} X &\times \EffDiv_{n - t} X &&\longrightarrow \EffDiv_n X \\
    ([D_0] \kern-\mylength&\kern\mylength,[D_1]) &&\longmapsto [D_0 + D_1].
  \end{alignat*}
  Then $W$ parametrizes the reduced and irreducible divisors of degree $n$ on X. Notice that $W \cap F \neq \emptyset$, because $F \subset \IrrDiv_n X$. The uniqueness of $E$ implies that there is a dense open set $U \subset F$ such that $q^{-1}(U)$ does not intersect with any irreducible component of $T_n$ other than $E$. Thus, we can take $[D] \in W \cap U$. Then $D$ is a reduced and irreducible divisor, since $[D] \in W$. \Cref{lem:generic sec} implies that there is $(f,g) \in S_n$ such that
  \[ p^{-1}((f,g)) = \left\{ ((f,g),[D]) \right\}. \]
  Then $E$ is the only irreducible component of $T_n$ containing $((f,g),[D])$, because $[D] \in U$.
\end{proof}

\begin{lem}\label{lem:comp3}
  Let $F$ and $E$ be as in \Cref{lem:comp1}. Then $p(E)$ is an irreducible component of $S_n$.
\end{lem}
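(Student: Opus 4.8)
The plan is to produce a nonempty open subset of $S_n$ that is contained in $p(E)$; since $p(E)$ will be closed and irreducible, this forces $p(E)$ to be an irreducible component of $S_n$. Two preliminary facts are needed. First, for each of the finitely many irreducible components $E'$ of $T_n$ the image $p(E')$ is closed: the morphism $q\colon T_n \to \EffDiv X$ is proper (as used in the proof of \Cref{lem:comp1}), its restriction to $E'$ has image inside $\EffDiv_m X$ for some $m$ (the divisors $D$ occurring in $T_n$ have degree bounded in terms of $n$), and $\EffDiv_m X$ is proper over $k$, being a finite union of the projective schemes $\Hilb_Q X$; hence $E'$ is proper over $k$ and $p|_{E'}\colon E' \to S_n$ is proper. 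Second, $p$ is surjective: given $(f,g) \in S_n$, since $X$ is smooth $\var[X]{f,g}$ contains a largest effective divisor $D$ (its divisorial part, which is zero if $\var[X]{f,g}$ contains a component of $X$), and then $((f,g),[D]) \in T_n$ maps to $(f,g)$.

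Now apply \Cref{lem:comp2} to get a point $(f,g) \in S_n$ such that $E$ is the only irreducible component of $T_n$ with $(f,g) \in p(E)$; in particular $(f,g) \notin p(E')$ for every component $E' \neq E$. Set $U = S_n \setminus \bigcup_{E' \neq E} p(E')$; a finite union of closed sets has been removed, so $U$ is open, and $(f,g) \in U$. If $(f',g') \in U$, then by surjectivity of $p$ there is a point of $T_n$ over $(f',g')$; it lies on some irreducible component of $T_n$, and that component is not any $E' \neq E$ because $(f',g') \notin p(E')$, so it is $E$, whence $(f',g') \in p(E)$. Therefore $U \subseteq p(E)$. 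As $p(E)$ is closed and irreducible it is contained in some irreducible component $Z$ of $S_n$; then $U \subseteq p(E) \subseteq Z$, so $U$ is a nonempty open, hence dense, subset of the irreducible scheme $Z$, and since $p(E)$ is closed we obtain $Z = \overline{U} \subseteq p(E) \subseteq Z$. Thus $p(E) = Z$ is an irreducible component of $S_n$.

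The argument is a formal consequence of \Cref{lem:comp2} together with properness; the only delicate point is ensuring that $T_n$ really is of finite type (so that it has finitely many components and the divisors it parametrizes have bounded degree), which is needed both for $U$ to be open and for the properness statements above. Everything else---surjectivity of $p$, closedness of $p(E)$, and the density argument---is routine once that is in place.
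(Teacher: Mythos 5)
Your overall strategy---properness of the projection to $S_n$, surjectivity, and the uniqueness statement of \Cref{lem:comp2} forcing the image of $E$ to swallow a whole component of $S_n$---is the same as the paper's. But the step you yourself flag as the ``only delicate point'' is a genuine gap, and it does not hold: $T_n$ is \emph{not} of finite type, and the divisors it parametrizes do \emph{not} have degree bounded in terms of $n$. Indeed, in the application one has $n \geq d$, so $(I_X)_n \neq 0$; for any pair $(f,g)$ with $f,g \in (I_X)_n$ one has $\var[X]{f,g} = X$, so $(f,g) \in S_n$ and \emph{every} effective divisor $D$ of \emph{every} degree $m$ satisfies $D \subset \var[X]{f,g}$. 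Hence $T_n$ meets $q^{-1}(\EffDiv_m X)$ for all $m$, and since each $q^{-1}(\EffDiv_m X)$ is open and closed in $T_n$, the scheme $T_n$ has infinitely many irreducible components. Your set $U = S_n \setminus \bigcup_{E' \neq E} p(E')$ is then the complement of an \emph{infinite} union of closed sets, so its openness is not automatic; in fact showing that this union is closed already requires the degree bound you are missing. (Each individual $p(E')$ is still closed, since each component maps into a single $\EffDiv_m X$ and is therefore proper---that part of your argument is fine.)

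The repair is exactly what the paper does: replace $T_n$ by $R_n = q^{-1}(\EffDiv_1 X \cup \cdots \cup \EffDiv_{n^2\deg X} X)$, which is proper and has finitely many irreducible components, each of which is also an irreducible component of $T_n$ (as $R_n$ is open and closed in $T_n$) and includes $E$ (since $q(E)=F\subset\EffDiv_n X$). The restriction $p|_{R_n}$ is still surjective onto $S_n$ because for every $(f,g)\in S_n$ the scheme $\var[X]{f,g}$ contains a nonzero effective divisor of degree at most $n^2\deg X$ (by B\'ezout when $\var[X]{f,g}$ is a proper subscheme; by choosing any low-degree divisor when it contains a component of $X$). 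With $R_n$ in place of $T_n$, your argument---or the paper's shorter version, namely that every irreducible component of $S_n$ is $p(R_i)$ for some component $R_i$ of $R_n$, and \Cref{lem:comp2} identifies the one containing $p(E)$ with $p(E)$ itself---goes through. Also note that your description of the ``largest effective divisor'' in $\var[X]{f,g}$ as being zero when $\var[X]{f,g}$ contains a component of $X$ is not right (in that case there is no largest one); this does not affect surjectivity, but it is symptomatic of the same unboundedness issue.
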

\begin{proof}
    Let
  \[ R_n = q^{-1} (\EffDiv_1 X \cup \EffDiv_2 X \cup \cdots
    \cup \EffDiv_{n^2 \deg X} X) \subset T_n.\]
  Since $R_n$ is proper, the restriction $p|_{R_n}: R_n \rightarrow S_n$ is also proper. Moreover, $p|_{R_n}$ is surjective, because $\deg V_X(f,g) \leq n^2 \deg X$ for every $(f,g) \in S_n$. Thus, every irreducible component of $S_n$ is the image of an irreducible component of $R_n$ under $p|_{R_n}$. Let $B \subset S_n$ be the irreducible component containing $p(E)$. Then \Cref{lem:comp2} implies that $p(E) = B$.
\end{proof}

\begin{lem}\label{lem:IrrDiv bound}
  If $n$ is a positive integer, then
  \[ \# \irr (\IrrDiv_n X) \leq \# \irr (S_n).\]
\end{lem}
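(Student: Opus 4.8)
The plan is to build an injection from $\irr(\IrrDiv_n X)$ into $\irr(S_n)$ using the correspondence scheme $T_n$ together with \Cref{lem:comp1} and \Cref{lem:comp3}. Concretely, given an irreducible component $F$ of $\IrrDiv_n X$, \Cref{lem:comp1} produces a unique irreducible component $E = E(F)$ of $T_n$ with $q(E) = F$; and \Cref{lem:comp3} shows $p(E)$ is an irreducible component of $S_n$. So the assignment $F \mapsto p(E(F))$ is a map $\Phi\colon \irr(\IrrDiv_n X)\to\irr(S_n)$, and the whole lemma reduces to showing $\Phi$ is injective.

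For injectivity, suppose $F$ and $F'$ are distinct components of $\IrrDiv_n X$ with $p(E(F)) = p(E(F')) =: B$, an irreducible component of $S_n$. If $E(F) = E(F')$ as components of $T_n$, then $F = q(E(F)) = q(E(F')) = F'$, a contradiction; so I may assume $E := E(F)$ and $E' := E(F')$ are distinct components of $T_n$, both mapping onto $B$ under $p$. Now invoke \Cref{lem:comp2} applied to $F$: there is a point $(f,g) \in S_n$ such that $E$ is the \emph{only} irreducible component of $T_n$ with $(f,g) \in p(E)$. But $(f,g) \in B = p(E')$, which forces $E' = E$ — contradiction. Hence $\Phi$ is injective, and $\#\irr(\IrrDiv_n X) \le \#\irr(S_n)$.

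I should double-check one compatibility point: \Cref{lem:comp2} gives a point $(f,g)$ with $p^{-1}((f,g)) = \{((f,g),[D])\}$ a single point lying only on $E$, so in particular $(f,g) \in p(E)$ and $(f,g) \notin p(E'')$ for every component $E'' \neq E$; this is exactly the exclusion property I use. The only subtlety is making sure the $(f,g)$ produced for $F$ genuinely lies in the image $p(E')$ of the \emph{other} component, which is automatic because $(f,g) \in B$ and $B = p(E')$ by assumption.

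The main obstacle is purely bookkeeping: ensuring that the three auxiliary lemmas are being applied to the same pair $(F, E)$ consistently and that "unique component $E$ with $q(E)=F$" (\Cref{lem:comp1}) and "$E$ is the only component with $(f,g)\in p(E)$" (\Cref{lem:comp2}) interact correctly. There is no geometric difficulty beyond what those lemmas already encode; the proof is a short diagram chase once the maps $\Phi$ and its well-definedness are set up.
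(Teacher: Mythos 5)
Your proposal is correct and follows exactly the paper's argument: the paper defines the same map $F \mapsto p(E)$ via \Cref{lem:comp1} and \Cref{lem:comp3}, and asserts injectivity from \Cref{lem:comp1} and \Cref{lem:comp2}. You have merely spelled out the injectivity step (the case split on whether $E(F)=E(F')$ and the use of the special point $(f,g)$ from \Cref{lem:comp2}) that the paper leaves implicit.
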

\begin{proof}
  \Cref{lem:comp1} and \Cref{lem:comp3} define a map
  \begin{align*}
    \irr(\IrrDiv_n X) &\longrightarrow \irr(S_n)\\
    F &\longmapsto P(E),
  \end{align*}
  where $E$ is determined by $F$ as in \Cref{lem:comp1}. Then \Cref{lem:comp1} and \Cref{lem:comp2} implies that this map is injective.
\end{proof}

\begin{lem}\label{lem:effdiv bound}
  Let $n$ be a positive integer. Then
  \[ \# \irr (\EffDiv_n X) \leq 2^n {2\max\{n,d\}+(r-1)d \choose r}^{2 {n + r \choose r}-2}.  \]
\end{lem}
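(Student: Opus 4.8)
The plan is to induct on $n$, using the decomposition of $\EffDiv_n X$ into the part parametrizing reducible divisors and the part $\IrrDiv_n X$ parametrizing (components containing) irreducible divisors. First I would observe that every irreducible component of $\EffDiv_n X$ either lies in $\IrrDiv_n X$ or has its generic point corresponding to a divisor that splits nontrivially as $D_0 + D_1$ with $\deg D_i = t$ and $n - t$ for some $1 \leq t \leq n-1$; hence such a component is dominated by an irreducible component of the image of the addition morphism $\EffDiv_t X \times \EffDiv_{n-t} X \to \EffDiv_n X$. Therefore
\[
  \#\irr(\EffDiv_n X) \leq \#\irr(\IrrDiv_n X) + \sum_{t=1}^{n-1} \#\irr(\EffDiv_t X)\cdot\#\irr(\EffDiv_{n-t} X).
\]

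Next I would bound $\#\irr(\IrrDiv_n X)$ by $\#\irr(S_n)$ using \Cref{lem:IrrDiv bound}, and bound $\#\irr(S_n)$ using \Cref{lem:bound Sn}. Writing $C_n = \binom{2\max\{n,d\}+(r-1)d}{r}^{2\binom{n+r}{r}-2}$ for the bound from \Cref{lem:bound Sn}, the recursion becomes $a_n \leq C_n + \sum_{t=1}^{n-1} a_t a_{n-t}$ where $a_n = \#\irr(\EffDiv_n X)$. I would then check that the sequence $b_n = 2^n C_n$ dominates: since $C_t C_{n-t} \leq C_n$ (each factor $\binom{2\max\{t,d\}+(r-1)d}{r}$ and exponent $2\binom{t+r}{r}-2$ is monotone and superadditive-friendly in the needed direction — more precisely the product of the two smaller binomial bases is at most the larger one, and the exponents add to at most the larger exponent), one gets $\sum_{t=1}^{n-1} b_t b_{n-t} \leq C_n \sum_{t=1}^{n-1} 2^t 2^{n-t} = C_n (n-1) 2^n \leq 2^{2n} C_n$, which is too weak by itself; so instead I would verify directly by induction that $a_n \leq 2^n C_n$: assuming $a_t \leq 2^t C_t$ for $t < n$, $\sum_{t=1}^{n-1} a_t a_{n-t} \leq \sum_{t=1}^{n-1} 2^n C_t C_{n-t} \leq 2^n C_n \sum_{t=1}^{n-1} 1 \cdot (\text{small})$ — this still needs $C_t C_{n-t}$ to be small compared to $C_n$, so the genuinely careful estimate is $\sum_{t=1}^{n-1} 2^t C_t \cdot 2^{n-t} C_{n-t} \leq 2^n \sum_{t=1}^{n-1} C_t C_{n-t} \leq (2^n - 2) C_n$ provided $\sum_{t=1}^{n-1} C_t C_{n-t} \leq (1 - 2^{1-n}) C_n$, after which $a_n \leq C_n + (2^n-2)C_n \leq 2^n C_n$ would not quite close; hence the cleanest route is to prove the slightly stronger bound $a_n \leq 2^{n-1} C_n$ or to use that $C_t C_{n-t} \leq C_{n-1} \leq C_n / (\text{a factor} \geq n)$.

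The main obstacle I expect is precisely this combinatorial bookkeeping: establishing that $\sum_{t=1}^{n-1} C_t C_{n-t}$ is absorbed into the gap between $C_n$ and $2^n C_n$. The key inequality to nail down is that $\binom{2\max\{t,d\}+(r-1)d}{r}\binom{2\max\{n-t,d\}+(r-1)d}{r}$ is dominated by $\binom{2\max\{n,d\}+(r-1)d}{r}$ raised to a power that, combined with the additivity $\big(2\binom{t+r}{r}-2\big)+\big(2\binom{n-t+r}{r}-2\big) \leq 2\binom{n+r}{r}-2$ (which holds since $\binom{t+r}{r}+\binom{n-t+r}{r} \leq \binom{n+r}{r}+1$ for $1 \leq t \leq n-1$), yields $C_t C_{n-t} \leq C_n$. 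Granting $C_t C_{n-t} \leq C_n$, the induction runs: $a_n \leq C_n + \sum_{t=1}^{n-1} 2^t C_t\, 2^{n-t} C_{n-t} = C_n + 2^n \sum_{t=1}^{n-1} C_t C_{n-t} \leq C_n + 2^n(n-1) C_n$; to match the claimed $2^n C_n$ one improves the inner bound to $C_t C_{n-t} \leq 2^{-n} C_n$ for $n$ large (using the strict slack in the exponent additivity, which grows with $n$) and treats the finitely many small $n$ by hand. I would finish by noting the base case $n = 1$: $\EffDiv_1 X \subset \IrrDiv_1 X$ since degree-one divisors are irreducible, so $a_1 \leq C_1 \leq 2 C_1$, and the stated bound $\#\irr(\EffDiv_n X) \leq 2^n \binom{2\max\{n,d\}+(r-1)d}{r}^{2\binom{n+r}{r}-2}$ follows.
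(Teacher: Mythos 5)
Your geometric setup is sound and genuinely different from the paper's: you run a recursion on $n$ via the addition maps $\EffDiv_t X\times\EffDiv_{n-t}X\to\EffDiv_n X$, whereas the paper decomposes in one shot, using the surjection $\coprod\IrrDiv_{n_0}X\times\cdots\times\IrrDiv_{n_{t-1}}X\to\EffDiv_n X$ over \emph{all} integer partitions $n_0+\cdots+n_{t-1}=n$. In the paper's version the factor $2^n$ is simply an upper bound for the number of partitions of $n$, and the only estimate needed is that $D(n_0)+\cdots+D(n_{t-1})\le D(n)$ for $D(x)=2\binom{x+r}{r}-2$, which follows at once from convexity of $D$ and $D(0)=0$. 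That one-step decomposition completely avoids the inductive bookkeeping that your write-up gets stuck on.

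The gap in your argument is exactly the one you flag: the induction $a_n\le 2^nC_n$ does not close from $C_tC_{n-t}\le C_n$ alone (that only gives $a_n\le C_n+2^n(n-1)C_n$), and the stronger inequality $C_tC_{n-t}\le 2^{-n}C_n$ that would close it is asserted but never proved; your text openly cycles through three candidate fixes without carrying any of them out. For the record, the missing inequality is in fact true and uniform in $n\ge 2$, so your route is salvageable: writing $C_m=B_m^{D(m)}$ with $B_m=\binom{2\max\{m,d\}+(r-1)d}{r}$, convexity of $D$ gives, for $1\le t\le n-1$,
\begin{equation*}
  D(n)-D(t)-D(n-t)\;\ge\;D(n)-D(1)-D(n-1)\;=\;2\binom{n+r-1}{r-1}-2r,
\end{equation*}
and since $\binom{n+r-1}{r-1}\ge\max\bigl\{n+1,\binom{r+1}{2}\bigr\}$ for $n\ge2$, $r\ge2$, this slack times $\log_2 B_n$ (with $B_n\ge\binom{n+r}{r}\ge 2$) exceeds $n$ in all cases; hence $C_tC_{n-t}\le B_n^{-n}C_n\le 2^{-n}C_n$ and $a_n\le C_n+2^n(n-1)2^{-n}C_n=nC_n\le 2^nC_n$. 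But none of this computation appears in your proposal, and without it the proof as written does not establish the stated bound. If you keep the recursive approach you must supply this estimate; otherwise, switching to the partition decomposition makes the whole issue disappear.
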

\begin{proof}
  Notice that
  \begin{alignat*}{3}
    \coprod_{n_0 + \cdots + n_{t-1} = n} \IrrDiv_{n_0} X \times &\cdots \times \IrrDiv_{n_{t-1}} X
    &&\longrightarrow \EffDiv_n X \\
    ([D_0],&\cdots,[D_{t-1}])
    &&\longmapsto [D_0+\cdots+D_{t-1}].
  \end{alignat*}
  is surjective, where the disjoint union runs over all integer partitions of $n$. Thus, the left-hand side has more irreducible components. Take any integer partition $n_0 + \cdots + n_{t-1} = n$. Then by \Cref{lem:bound Sn} and \Cref{lem:IrrDiv bound},
  \begin{align*}
    & \#\irr(\IrrDiv_{n_0} X \times \cdots \times \IrrDiv_{n_{t-1}} X ) \\
    =& \#\irr(\IrrDiv_{n_0} X) \times \cdots \times \#\irr(\IrrDiv_{n_{t-1}} X ) \\
    \leq& {2\max\{n_0,d\}+(r-1)d \choose r}^{2{n_0 + r \choose r}-2} \times \cdots \times {2\max\{n_{t-1},d\}+(r-1)d \choose r}^{2{n_{t-1} + r \choose r}-2}\\
    \leq& {2\max\{n,d\}+(r-1)d \choose r}^{\left({2{n_0 + r \choose r}}-2\right) + \cdots +\left({2{n_{t-1} + r \choose r}}-2\right)}.
  \end{align*}
  Let
  \[ D(x) = 2{x + r \choose r}-2. \]
  Then $D(0) = 0$ and $D$ is concave above in $[0,\infty)$. Therefore,
  \[ D(n_0) + D(n_1) + \cdots + D(n_{t-1}) \leq D(n). \]
  Furthermore, the number of integer partitions of $n$ is less than or equal to $2^n$. This proves the inequality.
\end{proof}

Now, we are ready to give a new upper bound:

\NSbound
\begin{proof}
  If $X$ is a curve or a projective space, then $(\NS X)_\tor = 0$. Thus, we may assume that $r \geq 3$, $d \geq 2$, $\deg X \geq 2$ and $r-2 \geq \codim X \geq 1$. Let $n = (d-1) \codim X \cdot \deg X$. Then
  \[ d \leq n \leq (r-2)(d-1)d^{r-2}. \]
  \Cref{thm:bound by EffDiv} and \Cref{lem:effdiv bound} implies that
  \[ \#(\NS X)_\tor \leq \conn\left(\EffDiv_n X \right) \leq \irr\left(\EffDiv_n X \right) \leq 2^n {2n+(r-1)d \choose r}^{2 {n + r \choose r}-2}. \]
Since
  \begin{align*}
    \log_2 {2n+(r-1)d \choose r}
    &\leq \log_2  \left(2n+(r-1)d\right)^r \\
    &\leq r \log_2  \left(rd^r\right) \\
    &\leq r(\log_2 r + r \log_2 d) \\
    &\leq r^2(1+\log_2 d) \\
    &\leq {r^2 d}
      \intertext{and}
    2 {n + r \choose r} &\leq \frac{2 (n+r)^r}{r!} \\
                        &\leq\frac{ (rd^{r-1})^r}{3},
  \end{align*}
we have
  \begin{align*}
    \log_d\log_2 \left(\#(\NS X)_\tor\right)
    &\leq \log_d\log_2 \left(2^n {2n+(r-1)d \choose r}^{2 {n + r \choose r}}\right) \\
    &\leq \log_d \left(n + r^2 d \frac{ (rd^{r-1})^r}{3}\right)\\
    &\leq \log_d \left( r^{r+2}d^{r(r-1)+1} \right)\\
    &\leq (r+2)\log_2 r+r(r-1)+1 \\
    &\leq 2r\log_2 r + r^2. \qedhere
  \end{align*}
\end{proof}

We now give an application to the torsion subgroups of second cohomology groups.
\begin{lem}\label{thm:NS coh}
  Let $X \hookrightarrow \mathbb{P}^r$ be a smooth projective variety of degree $d$. Let $\ell \neq \charac k$ be a prime number. The embedding $(\NS X)\otimes\mathbb{Z}_\ell \hookrightarrow H^2_\et(X,\mathbb{Z}_\ell)$ \cite[Remark V.3.29]{Mil2} induces by the Kummer sequence restricts to an isomorphism.
  \[ (\NS X)[\ell^\infty] \simeq H^2_\et(X,\mathbb{Z}_\ell)_\tor. \]
\end{lem}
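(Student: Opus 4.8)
The plan is to derive the statement from the Kummer sequence, following \cite[2.2]{SZ}. For each $n\ge 1$ the Kummer sequence $1\to\mu_{\ell^n}\to\mathbb{G}_m\xrightarrow{x\mapsto x^{\ell^n}}\mathbb{G}_m\to 1$ on the \'etale site of $X$ produces, via $H^1_\et(X,\mathbb{G}_m)=\pic X$ and writing $\mathrm{Br}\,X=H^2_\et(X,\mathbb{G}_m)$, a short exact sequence
\[ 0\longrightarrow \pic(X)/\ell^n\pic(X)\longrightarrow H^2_\et(X,\mu_{\ell^n})\longrightarrow (\mathrm{Br}\,X)[\ell^n]\longrightarrow 0, \]
and these fit into an inverse system over $n$ whose transition maps on the left-hand terms are the natural surjections $\pic(X)/\ell^{n+1}\twoheadrightarrow\pic(X)/\ell^n$. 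First I would pass to the inverse limit: since the left-hand transition maps are surjective the relevant $\varprojlim^1$ vanishes, so exactness is preserved and we obtain
\[ 0\longrightarrow \varprojlim_n\pic(X)/\ell^n\pic(X)\longrightarrow H^2_\et(X,\mathbb{Z}_\ell(1))\longrightarrow T_\ell(\mathrm{Br}\,X)\longrightarrow 0, \]
in which the middle term is $H^2_\et(X,\mathbb{Z}_\ell):=\varprojlim_n H^2_\et(X,\mu_{\ell^n})$ (using $\mathbb{Z}_\ell(1)\simeq\mathbb{Z}_\ell$, as $k$ is algebraically closed) and $T_\ell(\mathrm{Br}\,X)=\varprojlim_n(\mathrm{Br}\,X)[\ell^n]$ is a Tate module, hence torsion-free.

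Next I would identify the left-hand term. Since $k$ is algebraically closed, $\pic^0 X:=\Pic^0_{X/k}(k)$ is divisible, because multiplication by $\ell^n$ is a finite surjective morphism of the abelian variety $(\Pic^0_{X/k})_\red$; hence reducing the exact sequence $0\to\pic^0 X\to\pic X\to\NS X\to 0$ modulo $\ell^n$ gives $\pic(X)/\ell^n\pic(X)\simeq(\NS X)/\ell^n(\NS X)$. As $\NS X$ is finitely generated (N\'eron--Severi), $\varprojlim_n(\NS X)/\ell^n(\NS X)=(\NS X)\otimes\mathbb{Z}_\ell$, and the inclusion $(\NS X)\otimes\mathbb{Z}_\ell\hookrightarrow H^2_\et(X,\mathbb{Z}_\ell)$ so obtained is the cycle class embedding of \cite[Remark V.3.29]{Mil2}. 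Because $T_\ell(\mathrm{Br}\,X)$ is torsion-free, passing to torsion subgroups in the displayed exact sequence shows that this embedding restricts to an isomorphism $\bigl((\NS X)\otimes\mathbb{Z}_\ell\bigr)_\tor\xrightarrow{\ \sim\ }H^2_\et(X,\mathbb{Z}_\ell)_\tor$. Finally, writing $\NS X=\mathbb{Z}^{\rho}\oplus(\NS X)_\tor$ with $(\NS X)_\tor$ finite gives $\bigl((\NS X)\otimes\mathbb{Z}_\ell\bigr)_\tor=(\NS X)_\tor\otimes\mathbb{Z}_\ell=(\NS X)[\ell^\infty]$, which is the asserted isomorphism.

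The argument is essentially formal once the Kummer sequence is in hand; the one point deserving care is the verification that the map extracted from the inverse limit coincides with the embedding named in the statement, which is the standard description of the $\ell$-adic cycle class map via Kummer theory recorded in \cite[Remark V.3.29]{Mil2}, so I would simply cite it. The vanishing of $\varprojlim^1$, the divisibility of $\pic^0 X$, and the identity $\varprojlim_n M/\ell^n M = M\otimes\mathbb{Z}_\ell$ for a finitely generated abelian group $M$ are all routine.
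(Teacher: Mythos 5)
Your argument is correct and is precisely the standard Kummer-sequence computation that the paper outsources to the citation \cite[2.2]{SZ}: the paper's own ``proof'' is just that reference, and your write-up supplies the details (surjectivity of the transition maps giving $\varprojlim^1=0$, divisibility of $\pic^0$, torsion-freeness of $T_\ell(\mathrm{Br}\,X)$) correctly. Nothing further is needed.
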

\begin{proof}
See \cite[2.2]{SZ}.
\end{proof}

Therefore, the bound of \Cref{thm:NS better bound} implies the corollaries below.

\SHbound
\begin{proof}
  This follows from \Cref{thm:NS better bound} and \Cref{thm:NS coh}.
\end{proof}

\begin{cor}
  Let $X \hookrightarrow \mathbb{P}^r$ be a smooth projective variety over $\mathbb{C}$ defined by homogeneous polynomials of degree $\leq d$. Then
  \begin{equation*}
    \# H^2_\sing(X^\an,\mathbb{Z})_\tor \leq 2^{d^{r^2+2r\log_2 r}}.
  \end{equation*}
\end{cor}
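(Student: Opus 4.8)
The plan is to deduce this complex-analytic corollary from the étale corollary \Cref{thm:H2 bound} via the comparison isomorphism between étale and singular cohomology. First I would recall that for a smooth projective variety $X$ over $\mathbb{C}$, Artin's comparison theorem gives a canonical isomorphism $H^2_\et(X,\mathbb{Z}/\ell^n) \simeq H^2_\sing(X^\an,\mathbb{Z}/\ell^n)$ for every prime $\ell$ and every $n$, compatible with the change-of-coefficient maps; passing to the inverse limit over $n$ yields $H^2_\et(X,\mathbb{Z}_\ell) \simeq H^2_\sing(X^\an,\mathbb{Z})\otimes\mathbb{Z}_\ell$ (using that singular cohomology of a compact space is finitely generated, so that $\varprojlim_n H^2_\sing(X^\an,\mathbb{Z}/\ell^n) \simeq H^2_\sing(X^\an,\mathbb{Z})\otimes\mathbb{Z}_\ell$, the lim$^1$ term vanishing).

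Next I would extract the torsion. Write $A = H^2_\sing(X^\an,\mathbb{Z})$, a finitely generated abelian group. Then $A_\tor$ decomposes as a finite direct sum of its $\ell$-primary parts $A[\ell^\infty]$ over the (finitely many) primes $\ell$ dividing $\#A_\tor$, so $\#A_\tor = \prod_\ell \#A[\ell^\infty]$. For each prime $\ell$, the torsion subgroup of $A\otimes\mathbb{Z}_\ell$ is exactly $A[\ell^\infty]$, since tensoring with $\mathbb{Z}_\ell$ kills the torsion at other primes and is faithfully flat on the $\ell$-primary part. Combining with the comparison isomorphism from the previous paragraph, $\#A[\ell^\infty] = \#(H^2_\sing(X^\an,\mathbb{Z})\otimes\mathbb{Z}_\ell)_\tor = \#H^2_\et(X,\mathbb{Z}_\ell)_\tor$. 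Note that over $\mathbb{C}$ we have $\charac k = 0$, so every prime $\ell$ is allowed.

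Finally, I would put it together:
\begin{equation*}
  \# H^2_\sing(X^\an,\mathbb{Z})_\tor = \prod_{\ell \text{ prime}} \# H^2_\et(X,\mathbb{Z}_\ell)_\tor \leq 2^{d^{r^2+2r\log_2 r}},
\end{equation*}
where the inequality is exactly \Cref{thm:H2 bound} (with the empty condition $\ell \neq \charac k = 0$, i.e. all primes). This completes the proof.

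The only genuine subtlety — and the step I would be most careful about — is the passage from the mod-$\ell^n$ comparison isomorphisms to the $\mathbb{Z}_\ell$ statement and the identification of torsion; everything else is formal. In a complete write-up I would either cite Artin's comparison theorem in the form $H^i_\et(X,\mathbb{Z}_\ell)\simeq H^i_\sing(X^\an,\mathbb{Z}_\ell)$ directly (e.g.\ from \cite{Mil2}) and then invoke the universal coefficient theorem for the finitely generated group $A$ to write $H^2_\sing(X^\an,\mathbb{Z}_\ell) \simeq A\otimes\mathbb{Z}_\ell$, or alternatively bypass $\mathbb{Z}_\ell$ entirely by comparing $(\NS X)[\ell^\infty]$ with $A[\ell^\infty]$ through \Cref{thm:NS coh} together with the fact that, for $X/\mathbb{C}$, the cycle class map identifies $(\NS X)\otimes\mathbb{Z}_\ell \hookrightarrow H^2_\sing(X^\an,\mathbb{Z})\otimes\mathbb{Z}_\ell$ with image a saturated sublattice. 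Either route gives the $\ell$-by-$\ell$ equality of torsion orders needed to apply \Cref{thm:H2 bound}.
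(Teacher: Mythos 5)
Your proposal is correct and follows essentially the same route as the paper: the paper's proof simply cites \Cref{thm:H2 bound} together with the comparison fact $\prod_{\ell}H^2_\et(X,\mathbb{Z}_\ell)_\tor\simeq H^2_\sing(X^\an,\mathbb{Z})_\tor$, which is exactly the identity you establish (you just supply the Artin comparison and inverse-limit details that the paper leaves implicit). No gaps.
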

\begin{proof}
  This follows from \Cref{thm:H2 bound} and the fact that
  \[\prod_{\ell \textup{ is prime}} H^2_\et(X,\mathbb{Z}_\ell)_\tor
    \simeq H^2_\sing(X^\an,\mathbb{Z})_\tor.
  \qedhere\]
\end{proof}

\section{The Number of Generators of \texorpdfstring{$(\NS X)_{\tor}$}{(NS X)tor}}\label{sec:generators}
In this section, $X \hookrightarrow \mathbb{P}^r$ is a smooth connected projective variety, and $\ell$ is a prime number not equal to $\charac k$. The goal is to give a uniform upper bound on the number of generators of $(\NS X)[\ell^\infty]$. Our approach is a simplification of a brief sketch suggested by J\'anos Koll\'ar.

\begin{lem}\label{thm:NS fun}
  Let $X \hookrightarrow \mathbb{P}^r$ be a smooth connected projective variety of degree $d$, and $x_0$ be a geometric point of $X$. Let $\ell \neq \charac k$ be a prime number. Then \[(\NS X)[\ell^\infty]^* \simeq \pi^{\et}_1(X,x_0)^\ab[\ell^\infty].\]
\end{lem}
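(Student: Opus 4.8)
The statement to prove is that for a smooth connected projective variety $X$ of degree $d$ with geometric point $x_0$, and $\ell \neq \charac k$, one has $(\NS X)[\ell^\infty]^* \simeq \pi^{\et}_1(X,x_0)^\ab[\ell^\infty]$. The plan is to identify both sides with the torsion of the $\ell$-adic second cohomology via Poincaré-type dualities and the Kummer sequence. On the left, \Cref{thm:NS coh} already gives $(\NS X)[\ell^\infty] \simeq H^2_\et(X,\mathbb{Z}_\ell)_\tor$, so taking Pontryagin duals reduces the left side to $\left(H^2_\et(X,\mathbb{Z}_\ell)_\tor\right)^*$. On the right, I would use the Hurewicz-type identification of the abelianized étale fundamental group with étale homology: $\pi^{\et}_1(X,x_0)^\ab \simeq H_1^\et(X,\widehat{\mathbb{Z}})$ (prime-to-$p$ part), hence $\pi^{\et}_1(X,x_0)^\ab[\ell^\infty] \simeq H_1^\et(X,\mathbb{Z}_\ell)_\tor$, where $H_1^\et(X,\mathbb{Z}_\ell) = \varprojlim H^1_\et(X,\mu_{\ell^n})^\vee$ or, more directly, $H_1^\et(X,\mathbb{Z}_\ell) := \Hom_{\mathrm{cont}}(\pi_1^{\et}(X,x_0)^\ab, \mathbb{Z}_\ell)^\vee$ — essentially the dual of $H^1_\et(X,\mathbb{Z}_\ell)$ together with its torsion.

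The bridge between the two sides is the universal coefficient / duality comparison relating $H^1$ and $H^2$. First I would invoke the Kummer sequence $0 \to \mu_{\ell^n} \to \mathbb{G}_m \xrightarrow{\ell^n} \mathbb{G}_m \to 0$ to obtain, in the limit, the short exact sequence $0 \to (\pic X)(k)\otimes \mathbb{Z}_\ell \to H^1_\et(X,\mathbb{Z}_\ell(1)) \to T_\ell \mathrm{Br}(X) \to 0$ — but more useful is the comparison $H^1_\et(X,\mathbb{Z}_\ell) = H^1_\et(X,\mathbb{Z}_\ell)$ being torsion-free (it is a $\mathbb{Z}_\ell$-submodule of a module arising as an inverse limit of finite groups with no accumulating torsion, since $H^1_\et(X,\mathbb{Z}/\ell)$ stabilizes), so all the torsion on the $\pi_1$-side actually comes from the boundary map. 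Concretely, the finite-level universal coefficient sequence gives $H^1_\et(X,\mathbb{Z}/\ell^n) \cong \Hom(\pi_1^{\et}(X)^\ab, \mathbb{Z}/\ell^n)$ and the $\mathrm{Ext}$-term $\mathrm{Ext}^1(\pi_1^{\et}(X)^\ab, \mathbb{Z}/\ell^n)$ injects into $H^2$; passing to the limit, $\pi_1^{\et}(X,x_0)^\ab[\ell^\infty]^* \simeq \left(H^1_\et(X,\mathbb{Z}_\ell)/\text{(free part)}\right)$-torsion, which the universal coefficient theorem identifies with $H^2_\et(X,\mathbb{Z}_\ell)_\tor$. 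Dualizing once more and chaining with \Cref{thm:NS coh} closes the circle: $(\NS X)[\ell^\infty]^* \simeq H^2_\et(X,\mathbb{Z}_\ell)_\tor^* \simeq \pi_1^{\et}(X,x_0)^\ab[\ell^\infty]$.

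The main obstacle I anticipate is making the Hurewicz/universal-coefficient identification precise and citable in the $\ell$-adic, possibly positive-characteristic setting: one must be careful that $\pi_1^{\et}$ classifies \emph{finite} étale covers, so the natural statement is $\Hom_{\mathrm{cont}}(\pi_1^{\et}(X,x_0), \mathbb{Z}/\ell^n) \simeq H^1_\et(X,\mathbb{Z}/\ell^n)$ (covers with group $\mathbb{Z}/\ell^n$), and then the abelianization and $\ell$-adic limit have to be taken compatibly. I would handle this by working entirely at finite level with $\mathbb{Z}/\ell^n$-coefficients — where $H^1_\et(X,\mathbb{Z}/\ell^n) \simeq \Hom(\pi_1^{\et}(X,x_0)^\ab, \mathbb{Z}/\ell^n)$ is standard — apply Pontryagin duality for finite abelian groups to get $H^1_\et(X,\mathbb{Z}/\ell^n)^* \simeq \pi_1^{\et}(X,x_0)^\ab/\ell^n$, then relate $\pi_1^{\et}(X,x_0)^\ab[\ell^\infty]$ to the inverse limit of these via the finiteness of all groups involved (each $H^i_\et(X,\mathbb{Z}/\ell^n)$ is finite since $X$ is proper), and finally match $\varprojlim_n \left(H^1_\et(X,\mathbb{Z}/\ell^n)^*\right)$ with $H^2_\et(X,\mathbb{Z}_\ell)_\tor$ through the long exact sequence of $0 \to \mathbb{Z}_\ell \xrightarrow{\ell^n} \mathbb{Z}_\ell \to \mathbb{Z}/\ell^n \to 0$, which exhibits $H^2_\et(X,\mathbb{Z}_\ell)[\ell^n]$ as the cokernel of $H^1_\et(X,\mathbb{Z}_\ell) \to H^1_\et(X,\mathbb{Z}/\ell^n)$, dual to $\pi_1^{\et}(X,x_0)^\ab[\ell^n]$. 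The degree $d$ of $X$ plays no role in this lemma itself; it enters only when the bound $(d-1)(d-2)$ is later applied to $\pi_1^{\et}$ via a Lefschetz-type argument, so I would not attempt to use it here.
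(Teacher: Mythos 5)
Your proposal is correct in substance but takes a genuinely different route from the paper. The paper's proof is much shorter: it quotes an exact sequence (from its reference [Jak, Proposition 69]) relating $\pi_1^{\et}(X,x_0)$ to $\pi_1^{\et}(\Alb X,0)$ with kernel $(\NS X)_\tor^*$, passes to maximal pro-$\ell$ abelian quotients, and observes that since $\pi_1^{\et}(\Alb X,0)^{(\ell)}$ is a free $\mathbb{Z}_\ell$-module, the torsion of $\pi_1^{\et}(X,x_0)^{(\ell)}$ is exactly the finite kernel $(\NS X)[\ell^\infty]^*$. You instead chain through $H^2$: you take $(\NS X)[\ell^\infty]\simeq H^2_\et(X,\mathbb{Z}_\ell)_\tor$ (the paper's \Cref{thm:NS coh}) and then prove the \'etale linking-form duality $H^2_\et(X,\mathbb{Z}_\ell)_\tor^*\simeq \pi_1^{\et}(X,x_0)^\ab[\ell^\infty]$ by a universal-coefficient computation. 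Amusingly, the paper states exactly this duality as a \emph{consequence} of \Cref{thm:NS coh} and \Cref{thm:NS fun} in the remark following the lemma; you are running that remark in reverse and using it as the engine. Both routes rest on one external black box ([Jak] for the paper, [SZ] for you), so neither is clearly more self-contained; yours has the merit of not invoking the Albanese variety, at the cost of some $\ell$-adic bookkeeping.

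On the details of your plan: the concrete finite-level argument in your last paragraph is the right one and does work. Writing $A=\pi_1^{\et}(X,x_0)^{(\ell)}$, which is a finitely generated $\mathbb{Z}_\ell$-module because $H^1_\et(X,\mathbb{Z}/\ell)$ is finite, one has $H^1_\et(X,\mathbb{Z}/\ell^n)\simeq\Hom(A,\mathbb{Z}/\ell^n)\simeq(A/\ell^n)^*$, the coefficient sequence $0\to\mathbb{Z}_\ell\to\mathbb{Z}_\ell\to\mathbb{Z}/\ell^n\to 0$ gives $H^2_\et(X,\mathbb{Z}_\ell)[\ell^n]$ as the cokernel of $H^1_\et(X,\mathbb{Z}_\ell)\to H^1_\et(X,\mathbb{Z}/\ell^n)$ (using finiteness and lim-exactness), and decomposing $A\simeq\mathbb{Z}_\ell^b\oplus T$ identifies that cokernel with $(T)^*$ for $n\gg 0$; dualizing gives the claim. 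Two small criticisms: the digression through the Kummer sequence and $T_\ell\mathrm{Br}(X)$ in your second paragraph is a red herring (the Kummer sequence is already absorbed into \Cref{thm:NS coh} and plays no further role), and the sentence identifying the torsion with ``$H^1_\et(X,\mathbb{Z}_\ell)/(\text{free part})$-torsion'' is garbled, since $H^1_\et(X,\mathbb{Z}_\ell)$ is itself torsion-free; but the finite-level version that follows supersedes it, so neither affects the correctness of the plan.
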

\begin{proof}
  The exact sequence in \cite[Proposition 69]{Jak} gives an exact sequence
  \[ 0 \rightarrow (\NS X)[\ell^\infty]^* \rightarrow \pi^{\et}_1(X,x_0)^{(\ell)}
    \rightarrow \pi^{\et}_1(\Alb X,0)^{(\ell)} \rightarrow 0, \]
  by taking the maximal pro-$\ell$ abelian quotient. Since $\pi^{\et}_1(\Alb X,0)^{(\ell)}$ is a free $\mathbb{Z}_\ell$-module,
  \[(\NS X)[\ell^\infty]^* \simeq \pi^{\et}_1(X,x_0)^{(\ell)}_\tor
    \simeq \pi^{\et}_1(X,x_0)^{\ab}[\ell^\infty]. \qedhere\]
\end{proof}

If $M$ is a topological manifold, the linking form implies that
$H^2_\sing(M,\mathbb{Z})_\tor^* \simeq H^\sing_1(M,\mathbb{Z})_\tor.$
\Cref{thm:NS coh} and \Cref{thm:NS fun} imply the \'etale analogy that
$H^2_\et(X,\mathbb{Z}_\ell)_\tor^* \simeq\pi^{\et}_1(X,x_0)^\ab[\ell^\infty].$

% Instead, we will use the Lefschetz hyperplane theorem for \'etale fundamental groups.

% \begin{lem}\label{lem:lefschetz}
%   Let $H$ be a hyperplane section of $X$, and let $x_0$ be a geometric point in $H$. If $\dim X \geq 2$, the natural map $\pi_1^{\et}(H,x_0) \rightarrow \pi_1^{\et}(X,x_0)$ is surjective.
% \end{lem}

\begin{lem}\label{thm:pos bound}
  Let $X \hookrightarrow \mathbb{P}^r$ be a smooth connected projective variety of degree $d$. Then $(\NS X)[\ell^\infty]$ is generated by less than or equal to $(d-1)(d-2)$ elements.
\end{lem}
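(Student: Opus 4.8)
The plan is to reduce the bound on the number of generators of $(\NS X)[\ell^\infty]$ to a statement about $\pi_1^{\et}(X,x_0)^{\ab}[\ell^\infty]$ via \Cref{thm:NS fun}, and then to bound the latter using a Lefschetz-type argument that passes to a smooth surface section of $X$. Since $(\NS X)[\ell^\infty]$ is a finite abelian $\ell$-group, it is isomorphic to its Pontryagin dual, so by \Cref{thm:NS fun} it suffices to bound the minimal number of generators of $\pi_1^{\et}(X,x_0)^{\ab}[\ell^\infty]$, equivalently of $\pi_1^{\et}(X,x_0)^{(\ell)}_{\tor}$. The first step is therefore to record that $(\NS X)[\ell^\infty]$ and $\pi_1^{\et}(X,x_0)^{\ab}[\ell^\infty]$ have the same minimal number of generators.

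Next I would invoke a Bertini/Lefschetz hyperplane theorem for the étale fundamental group: cutting $X$ by a sufficiently general hyperplane repeatedly produces a smooth connected surface $S \subset X$ (a complete intersection of $X$ with $\dim X - 2$ general hyperplanes) with $\deg S = \deg X = d$, and the inclusion $S \hookrightarrow X$ induces a surjection $\pi_1^{\et}(S,x_0) \twoheadrightarrow \pi_1^{\et}(X,x_0)$. Hence it is enough to bound the number of generators of $\pi_1^{\et}(S)^{\ab}[\ell^\infty]$, and by the same duality (\Cref{thm:NS fun} applied to $S$, or directly to $H^2_{\et}$) this is the number of generators of $(\NS S)[\ell^\infty] \cong H^2_{\et}(S,\mathbb{Z}_\ell)_{\tor}$. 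So the problem is reduced to the case of a smooth projective surface $S$ of degree $d$ in $\mathbb{P}^r$.

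For a surface $S$, I would bound $\#\pi_1^{\et}(S)^{\ab}[\ell^\infty]$-generators by the first Betti number or, more precisely, relate the torsion of $H_1$ to the torsion of $\NS$ and then bound the relevant rank by $(d-1)(d-2)$. The clean route is: project $S$ generically to a surface $\bar S \subset \mathbb{P}^3$ of degree $d$ — a general such projection is birational onto its image and, after a further general hyperplane section argument, one reduces the generator count to that of a smooth plane curve $C$ of degree $d$, whose genus is $g = (d-1)(d-2)/2$; then $H_1^{\et}(C)$ has rank $2g$ and the surjection from the curve's $\pi_1$ controls the surface. More directly, the torsion subgroup in question injects (via the Picard variety) into a group of $2$-torsion-type rank bounded by the genus of a hyperplane section, and $2 \cdot \binom{d-1}{2} = (d-1)(d-2)$ emerges as the bound. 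I would make this precise by using that $\pi_1^{\et}(S)^{\ab}$ is a quotient of $\pi_1^{\et}(C)^{\ab} \cong \widehat{\mathbb{Z}}^{2g}$ (away from $p$) for $C$ a smooth curve section, so its torsion is generated by at most $2g = (d-1)(d-2)$ elements.

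The main obstacle will be the Lefschetz step in positive characteristic: the surjectivity $\pi_1^{\et}(S) \twoheadrightarrow \pi_1^{\et}(X)$ for hyperplane sections is standard (SGA 2, or Esnault–Kindler type results), but one must be careful that a genuinely general section is smooth and connected and that the surjection holds on the prime-to-$p$ part — which is exactly why the theorem excludes $(\NS X)[p^\infty]$ in the characteristic $p$ case in \Cref{thm:NSgen}. A second delicate point is getting the constant exactly $(d-1)(d-2)$ rather than something like $2\binom{d-1}{2}$ off by the reduced/irreducible subtleties of the image curve under generic projection; handling the (possibly singular) image plane curve and relating its normalization's genus back to $d$ via Plücker-type bounds, while ensuring the surjection of fundamental groups survives normalization, is where the care is needed.
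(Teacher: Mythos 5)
Your proposal is correct and follows essentially the same route as the paper: dualize via \Cref{thm:NS fun} to $\pi_1^{\et}(X,x_0)^{\ab}[\ell^\infty]$, use the Lefschetz hyperplane theorem (SGA 2) to surject $\pi_1^{\et}$ of a general linear section onto $\pi_1^{\et}(X)$, and bound by $2g \leq (d-1)(d-2)$ for a degree-$d$ curve. The only difference is that the paper cuts directly down to a smooth connected curve $C = X \cap L$ rather than pausing at a surface and worrying about generic projections to $\mathbb{P}^3$ and $\mathbb{P}^2$ --- that detour is unnecessary, since the genus bound $g \leq (d-1)(d-2)/2$ holds for any smooth degree-$d$ curve in projective space.
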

\begin{proof}
    For a general linear space $L \subset \mathbb{P}^r$ of dimension $r - \dim X+1$, the intersection $C \coloneqq X \cap L$ is a connected smooth curve of degree $d$. Take a geometric point $x_0 \in C$. Then the natural map
  \[ \pi_1^{\et}(C,x_0) \rightarrow \pi_1^{\et}(X,x_0) \]
  is surjective, by repeatedly applying the Lefschetz hyperplane theorem for \'etale fundamental groups \cite[XII. Corollaire 3.5]{SGA2}. Let $g$ be the genus of $C$. Then \Cref{thm:NS fun} implies that $(\NS X)[\ell^\infty]^*$ is isomorphic to a subquotient of
  \[ \pi_1^{\et}(C,x_0)^{(\ell)} \simeq \mathbb{Z}_\ell^{2 g}. \]
   Notice that $2g \leq (d-1)(d-2)$ because $\deg C = d$. As a result, $(\NS X)[\ell^\infty]$ is generated by $\leq (d-1)(d-2)$ elements.
 \end{proof}

\NSgen
\begin{proof}
  Since a product of finitely many finite cyclic groups of coprime orders is again cyclic,
  \[N \simeq
    \prod_{\substack{\ell \neq p\\ \ell \textup{ is prime}}} (\NS X)[\ell^\infty] \]
  is a product of $\leq(d-1)(d-2)$ cyclic groups by \Cref{thm:pos bound}.
\end{proof}

\SHgen
\begin{proof}
  This follows from \Cref{thm:pos bound} and \Cref{thm:NS coh}.
\end{proof}

\begin{cor}
  Let $X \hookrightarrow \mathbb{P}^r$ be a smooth connected projective variety of degree $d$ over $\mathbb{C}$. Then $H^2_\sing(X^\an,\mathbb{Z})$ is generated by less than or equal to $(d-1)(d-2)$ elements.
\end{cor}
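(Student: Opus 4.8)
The plan is to deduce this from the preceding corollaries via the Artin comparison theorem, after first isolating what part of $H^2_\sing(X^\an,\mathbb{Z})$ can actually be controlled. The free quotient of $H^2_\sing(X^\an,\mathbb{Z})$ has rank equal to the second Betti number $b_2$ of $X$, and $b_2$ grows well beyond the bound $(d-1)(d-2)$: already a smooth surface of degree $d$ in $\mathbb{P}^3$ has $b_2 = d^3-4d^2+6d-2$ (e.g.\ $b_2 = 22$ for a smooth quartic surface, against $(d-1)(d-2) = 6$). So the free part cannot be generated by $(d-1)(d-2)$ elements, and the genuine content of the statement — and what I would prove — is the corresponding bound on the minimal number of generators of the torsion subgroup $H^2_\sing(X^\an,\mathbb{Z})_\tor$, for which the free summands are irrelevant.

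For the torsion subgroup I would invoke the comparison isomorphism already used in the preceding singular-cohomology corollary, namely
\[ \prod_{\ell \textup{ prime}} H^2_\et(X,\mathbb{Z}_\ell)_\tor \simeq H^2_\sing(X^\an,\mathbb{Z})_\tor. \]
Because $H^2_\sing(X^\an,\mathbb{Z})_\tor$ is a finite group, only finitely many primes $\ell$ contribute a nontrivial factor; and since $\charac k = 0$ over $\mathbb{C}$, every prime $\ell$ is admissible, so \Cref{thm:NS coh} applies to each factor. Combining \Cref{thm:NS coh} with \Cref{thm:pos bound}, each $\ell$-primary factor $H^2_\et(X,\mathbb{Z}_\ell)_\tor \simeq (\NS X)[\ell^\infty]$ is generated by at most $(d-1)(d-2)$ elements.

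Finally I would assemble these factorwise bounds exactly as in the proof of \Cref{thm:NSgen}. Writing each finite abelian $\ell$-group $H^2_\et(X,\mathbb{Z}_\ell)_\tor$ as a direct sum of at most $(d-1)(d-2)$ cyclic groups and then grouping the $i$-th cyclic summands across the distinct primes $\ell$, the Chinese remainder theorem collapses each such grouping to a single cyclic group, since its factors have pairwise coprime orders. Hence $H^2_\sing(X^\an,\mathbb{Z})_\tor$ is a product of at most $(d-1)(d-2)$ cyclic groups and is therefore generated by at most $(d-1)(d-2)$ elements. The only real obstacle is the bookkeeping in this last step: one must pair up the cyclic summands across primes so that it is the \emph{number} of summands, not their orders, that stays bounded by $(d-1)(d-2)$; every other ingredient is already established in the excerpt.
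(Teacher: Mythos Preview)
Your proposal is correct and follows essentially the same route as the paper: identify that the claim is really about $H^2_\sing(X^\an,\mathbb{Z})_\tor$ (the statement as printed is missing the subscript, and your $b_2$ computation confirms this), pass through the comparison isomorphism to the product of the $\ell$-adic torsion groups, and then bound the number of cyclic factors uniformly in $\ell$. The only cosmetic difference is that the paper cites \Cref{thm:NSgen} directly---which already packages the Chinese-remainder step---whereas you unwind that step by hand.
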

\begin{proof}
  This follows from \Cref{thm:NSgen} and the fact that
  \[ (\NS X)_\tor
    \simeq \prod_{\ell \textup{ is prime}}(\NS X)[\ell^\infty]
    \simeq \prod_{\ell \textup{ is prime}} H^2_\et(X,\mathbb{Z}_\ell)_\tor
    \simeq H^2_\sing(X^\an,\mathbb{Z})_\tor.
  \qedhere\]
\end{proof}

The bounds in this section exclude the case $\ell = \charac k$, because of the bad behavior of the \'etale cohomology at $p$. One may try to overcome this by using Nori's fundamental group scheme \cite{Nor}. However, the Lefschetz hyperplane theorem for Nori's fundamental group scheme is no longer true \cite[Remark 2.4]{BH}.
\begin{qst}
  Can one use another cohomology to prove the analogue of \Cref{thm:pos bound} for $(\NS X)[p^\infty]$ in characteristic $p$?
\end{qst}

\section*{Acknowledgement}
The author thanks his advisor, Bjorn Poonen, for suggesting the problem, insightful conversation and careful guidance. The author thanks J\'anos Koll\'ar, whose comments led to \Cref{sec:chow} and \Cref{sec:generators}. The author thanks Wei Zhang for helpful conversation regarding \'etale homology. The author thanks Chenyang Xu for helpful conversation regarding Chow varieties. %The author thanks Barry Mazur for suggesting Nori's fundamental group scheme.

\bibliographystyle{plain}
\bibliography{mybib}

\end{document}